\documentclass[12pt,hypertexnames=false]{amsart}

\usepackage{amsmath}
\usepackage{amsfonts,mathtools}
\usepackage{amssymb,color}
\usepackage{amsthm,doi}
\usepackage{thmtools}
\usepackage{tikz,color}
\usepackage{pgfplots}
\usepackage{graphicx}
\usepackage{enumitem}
\pgfplotsset{compat=1.15}
\usepackage[square, numbers]{natbib}
\usepackage{comment}
\usepackage{mathrsfs}
\usepackage{etoolbox}

\hypersetup{
	colorlinks   = true, 
	urlcolor     = blue, 
	linkcolor    = blue, 
	citecolor   = red 
}

\newcommand{\N}{\mathbb{N}}
\newcommand{\Z}{\mathbb{Z}}

\newcommand{\R}{\mathbb{R}}

\newcommand{\Rg}{\R^d_\gamma}
\newcommand{\Sd}{\mathbb{S}^{d-1}}
\newcommand{\D}{\mathcal{D}}

\newcommand{\V}{\mathcal{V}}
\newcommand{\Ro}{\mathcal{R}}
\newcommand{\Lo}{\mathcal{L}}

\newcommand{\Bo}{\mathcal{B}}
\newcommand{\To}{\mathcal{T}}
\newcommand{\torus}{\mathbb{T}^d}

\newcommand{\A}{\mathcal{A}}

\newcommand{\Hyp}{H_{\operatorname{hyp}}^1}
\newcommand{\HypO}{H_{\operatorname{hyp},0}^1}
\newcommand{\Hyptr}{H_{\operatorname{hyp},\tr}^1}
\newcommand{\Hyptrp}{H_{\operatorname{hyp},\tr_+}^1}
\newcommand{\Hyptrm}{H_{\operatorname{hyp},\tr_-}^1}
\newcommand{\Hyptrpm}{H_{\operatorname{hyp},\tr_\pm}^1}
\newcommand{\ktr}{K\textsubscript{tr}}
\newcommand{\kre}{K\textsubscript{refl}}
\newcommand{\kto}{K\textsubscript{per}}
\newcommand{\Crefl}{C^1_{c, \operatorname{refl}}(\overline{\D})}

\newcommand{\HOv}{H^1_{0,v}}
\newcommand{\Hyprefl}
{H_{\operatorname{hyp},\operatorname{refl}}^1}

\DeclareMathOperator{\supp}{supp}
\DeclareMathOperator{\dist}{dist}
\DeclareMathOperator{\tr}{tr}
\DeclareMathOperator{\Tr}{Tr}

\DeclareMathOperator{\loc}{loc}

\def\hmath$#1${\texorpdfstring{{\rmfamily\textit{#1}}}{#1}}

\newtheorem{lemma}{Lemma}[section]
\newtheorem{theorem}[lemma]{Theorem}
\newtheorem{corollary}[lemma]{Corollary}
\newtheorem{proposition}[lemma]{Proposition}

\theoremstyle{definition}
\newtheorem{definition}[lemma]{Definition}

\newtheorem{remark}[lemma]{Remark}
\newtheorem{problemx}{Problem}
\newenvironment{problem}[1]
 {\renewcommand\theproblemx{#1}\problemx}
 {\endproblemx}

\let\oldtocsection=\tocsection
\let\oldtocsubsection=\tocsubsection
\let\oldtocsubsubsection=\tocsubsubsection
\renewcommand{\tocsection}[2]{\hspace{0em}\oldtocsection{#1}{#2}}
\renewcommand{\tocsubsection}[2]{\hspace{1em}\oldtocsubsection{#1}{#2}}
\renewcommand{\tocsubsubsection}[2]{\hspace{2em}\oldtocsubsubsection{#1}{#2}}

\usepackage{hyperref}
\PassOptionsToPackage{plainpages=false, pdfpagelabels,hypertexnames = false}{hyperref}

\numberwithin{equation}{section}

\author[L. Valentini]{Lisa Valentini}
\address[L. V.]{ 
Department of Pure Mathematics and Mathematical Statistics
\\ University of Cambridge \\
Wilberforce Road, Cambridge CB3 0WA, United Kingdom}
\email{lv390@cam.ac.uk}

\title[Well-posedness of the Kolmogorov-Fokker-Planck equation]{Well-posedness of the Kolmogorov-Fokker-Planck equation on bounded domains}

\subjclass[2020]{primary 35H10; secondary 35D30, 35K70, 35Q84, 35D99}

\keywords{kinetic Fokker–Planck, hypoelliptic, Poincaré inequality, weak solution, trace problem, inflow and reflection boundary problems}

\begin{document}

\begin{abstract}
We establish well-posedness of the stationary Kolmogorov equation on a bounded domain, with {either} spherical {or Gaussian velocities}, subject to either inflow boundary conditions or specular reflection. For the sake of completeness, we also include the problem on the torus, which has already been solved in \cite{Albritton2024VariationalEquation}. {Although the natural trace estimate with weight $|n_x\cdot v|$ does not hold in general in either setting, we provide a natural functional framework in which the}
trace is defined via the transport operator. We prove a Poincaré-type inequality with trace, which is an essential step towards the well-posedness of the inflow problem without damping. Moreover, we obtain a one-sided energy estimate for the trace with weight $|n_x\cdot v|$, in which the outflow (resp. inflow) flux is bounded by the energy inside the domain and the inflow (resp. outflow) flux. Finally, for the bounded-velocity model, we complement the existing well-posedness theory for the stationary Kolmogorov equation with inflow conditions on the hypoelliptic boundary and Dirichlet conditions on the velocity boundary.
\end{abstract}

\maketitle
\tableofcontents

\section{Introduction}

\subsection{The Kolmogorov equation}

For $d\geq 2$, let $\Rg$ be the Euclidean space $\R^d$ endowed with the standard Gaussian measure
\begin{equation}
    d\gamma(v) = \frac{1}{(2\pi)^{d/2}}e^{-\frac{|v|^2}{2}} dv,
\end{equation}
and let $\Sd\subseteq\R^d$ be the unit sphere. We develop a well-posedness theory for the stationary Kolmogorov-Fokker-Planck equation on bounded spatial domains $\Omega$,
either with Gaussian-weighted velocities
\begin{equation}
    \label{eq:kolm_gaus}
    v \cdot \nabla_xf = {\Delta}_vf - v \cdot \nabla_v f+  S,
    \qquad 
    (x,v)\in \Omega \times\Rg,
\end{equation}
or spherical velocities
\begin{equation}
    \label{eq:kolm_rad}
    v \cdot \nabla_x f = {\Delta}_vf+  S,
    \qquad 
    (x,v)\in \Omega \times\Sd,
\end{equation}
with solution $f=f(x,v)$ and source $S=S(x,v)$. 

Kolmogorov equations arise in the theory of stochastic processes as linear second-order parabolic equations with a non-negative characteristic form \cite{Chandrasekhar1943StochasticAstronomy}. The prototypical time-dependent Kolmogorov equation on the Euclidean phase-space $\R^{2d}$, namely
\begin{equation}\label{eq:kolm_intro}
    (\partial_{t} + v \cdot \nabla_x) f = \Delta_v f, \quad (t, x,v) \in (0,+\infty) \times \R^{2d},
\end{equation}
was first introduced by Kolmogorov \cite{Kolmogorov1934ZufalligeBewegung} as a model for random motion in which velocity evolves diffusively (via Brownian noise) and position changes according to that velocity. The solution $f(t,x,v)$ is interpreted as the density evolution of the particles that, at time $t$, occupy position $x$ and velocity $v$ in the phase space. The equation combines two contributions: the drift term $\partial_{t} + v \cdot \nabla_x$, representing deterministic transport, and the diffusion term $\Delta_v$, which models diffusion in velocity. More generally, the equation \eqref{eq:kolm_intro} is called the kinetic Fokker-Planck equation (or Kolmogorov-Fokker-Planck) when pure diffusion $\Delta_v$ is replaced with $\nabla_v \cdot (\nabla_v + v)$, and it is called the Kramers-Fokker-Planck equation when confinement ${\bf b}\cdot \nabla_v$ due to a systematic force ${\bf b}={\bf b}(x)$ is included. 

The Gaussian weight arises naturally in the study of the kinetic Fokker–Planck equation. Indeed, the Maxwellian is a stationary solution of the equation, and expressing solutions relative to this equilibrium transforms the equation into one whose natural reference measure is the Gaussian measure. Its stationary version is \eqref{eq:kolm_gaus}. 

The equation \eqref{eq:kolm_rad} and its time-dependent version can be seen in the framework of the linear Boltzmann equation, which describes the statistical propagation of neutral particles through a background scattering medium. Depending on the physical application, this overarching framework is referred to either as the Radiative Transfer Equation when modelling photon transport \cite{Chandrasekhar1960RadiativeTransfer, Ishimaru1978WaveMedia}, with applications in astrophysics \cite{Mihalas1999FoundationsHydrodynamics}, atmospheric radiation \cite{Mishchenko2006MultipleBackscattering} and optical imaging \cite{Kim2003LightTissue}, or as the Neutron Transport Equation when modelling reactor core physics in nuclear engineering \cite{Weinberg1959TheReactors,Bell1970NuclearTheory,Duderstadt1976NuclearAnalysis, Dautray2000MathematicalTechnology}. For photons, because they travel at the constant speed of light, the velocity space naturally restricts to the unit directional sphere $v\in\Sd$. While neutron interactions fundamentally involve variable kinetic energies, a standard paradigm known as mono-energetic (or one-speed) transport theory assumes a uniform particle speed, thereby restricting to $v\in\Sd$. Under the physical assumption that scattering is highly forward-peaked (meaning particles experience only infinitesimal angular deflections when colliding), the classical integral scattering operator asymptotically reduces to the Laplace-Beltrami operator on $\Sd$, via the so-called grazing collision or small-angle limit \cite{POMRANING1992THELIMIT,Leakeas2001GeneralizedScattering}. The transport model is then reduced to a Kolmogorov-Fokker-Planck equation with spherical velocity, whose simplest example is \eqref{eq:kolm_rad}. Therefore, we will sometimes refer to equation \eqref{eq:kolm_rad} as the Kolmogorov equation for radiative transfer.

We will also consider the confined-velocity model 
\begin{equation}
    \label{eq:kolm_bounded}
    v \cdot \nabla_x f = {\Delta}_vf+  S,
    \qquad 
    (x,v)\in \Omega \times\V,
\end{equation}
where $\V \subseteq \R^d$ is a Lipschitz bounded domain. Contrary to equations \eqref{eq:kolm_gaus} and \eqref{eq:kolm_rad}, boundary value problems for \eqref{eq:kolm_bounded} involve boundary data on the velocity boundary $\Omega \times \partial\V$ as well.

The Ornstein–Uhlenbeck operator $\Delta_v - v \cdot \nabla_v$ on $\Rg$, the Laplace-Beltrami operator $\Delta_v$ on $\Sd$, and the Laplace operator $\Delta_v$ on $\V$ are the natural diffusive self-adjoint operators for the respective velocity models. The three equations \eqref{eq:kolm_gaus}, \eqref{eq:kolm_rad} and \eqref{eq:kolm_bounded} can be written as
\begin{equation}
    \label{eq:kolm_both}
    cf + v \cdot \nabla_x f = \A_vf+  S,
    \qquad 
    (x,v)\in \D=\Omega \times V,
\end{equation}
where $c\in\R$ and
\begin{equation*}
    \A_v =
    \begin{cases}
        \Delta_v - v \cdot \nabla_v & V=\Rg, \\
        {\Delta}_v            & V=\Sd, \V.
    \end{cases}
\end{equation*}
When $c > 0$, the shift $cf$ models attenuation (or damping) due to interactions with the background medium. From the analytical point of view, it also provides direct $L^2$-coercivity. The limiting case $c=0$, corresponding to the absence of damping, is more delicate, since this coercivity is lost.

Despite being degenerate, in the sense that the coefficient matrix of second-order derivatives is positive semidefinite, the Kolmogorov-Fokker-Planck equation is hypoelliptic, meaning that if the source is smooth, the solution is also smooth (according to the definition of H\"ormander \cite{Hormander1967HypoellipticEquations}). In \cite{Kolmogorov1934ZufalligeBewegung}, Kolmogorov gave the explicit expression for the fundamental solution of \eqref{eq:kolm_intro}, showing the hypoellipticity of the Kolmogorov operator explicitly. Later, H\"ormander \cite{Hormander1967HypoellipticEquations} introduced a family of second-order differential operators shaped after the Kolmogorov operator, and gave a geometric characterisation of their hypoellipticity. This theory applies to Kolmogorov-Fokker-Planck equations, and results regarding regularity of their solutions rely on subelliptic estimates -- see, for instance, \cite{Pascucci2004TheEquations, Golse2019HarnackEquation, Guerand2022QuantitativeTheory, Guerand2023Log-transformEquations, Anceschi2022AEquations}. However, when formulating boundary value problems on bounded domains, the degeneracy of the equations implies that the standard notion of trace available for elliptic problems no longer applies. 
In fact, the natural trace estimate for the relevant kinetic function space does not hold in general, as shown in \cite{Niebel2026SharpTheory}. Therefore, boundary value problems cannot be defined via trace operators as in the elliptic setting, but need a different functional framework.

\subsection{Content and structure of the paper}
Throughout the paper, we consider both Gaussian velocities ($V=\Rg$) and spherical velocities ($V=\Sd$). 
We prove well-posedness for the stationary Kolmogorov equation \eqref{eq:kolm_both} in three different scenarios: the case where $\Omega$ is a bounded domain with either inflow boundary condition (Problem~\ref{prob:trace_G}) or specular reflection (Problem~\ref{prob:refl}), and the case where $\Omega=\torus$ (Problem~\ref{prob:torus}{}). The latter was solved already in \cite{Albritton2024VariationalEquation} for Gaussian velocities, and we include it here for the sake of completeness. 

In particular, our main contributions are as follows.
\begin{itemize}
    \item  For $\Omega$ a bounded Lipschitz domain, we introduce the space $\Hyptr(\D)$ of functions $f\in\Hyp(\D)$ (see Definition \ref{def:Hyp}) for which there exists $f_\Gamma\in L^2(\partial\D,|n_x \cdot v|)$ such that the transport term $v \cdot \nabla_xf$ satisfies integration by parts against $C^1_c(\overline{\D})$ in the dual sense, with $f_\Gamma$ appearing in the boundary term (Definition \ref{def:hypA}). We call $f_\Gamma$ the trace of $f$.
    \item For $\Omega$ a bounded Lipschitz domain, we prove an a priori bound on the average of $f\in\Hyptr(\D)$ and, consequently, a Poincaré-type inequality (Theorem \ref{th:poincare1}).
    \item For $\Omega$ sufficiently regular, we prove well-posedness for the inflow problem Problem \ref{prob:trace_G} in $\Hyptr(\D)$ (Theorem~\ref{th:wellpos_inflow_initial}); in particular, the Poincaré-type inequality is crucial to recover the $L^2$ control in the equation without damping.
    \item For $\Omega$ sufficiently regular, we obtain a one-sided trace energy estimate with natural weight $|n_x\cdot v|$ for functions in $\Hyptr(\D)$: the outflow (resp. inflow) flux is bounded by the energy inside the domain and the inflow (resp. outflow) flux (Theorem \ref{th:strong_trace_0}).
    \item For $\Omega$ sufficiently regular, we show well-posedness for Problem \ref{prob:refl}{} (Theorem \ref{th:wellpos_refl_initial}).
\end{itemize}

For the confined-velocity model $V=\V$, the corresponding Dirichlet problem was studied by Avelin and Hou \cite{Avelin2026WeakDomains}. We complement their well-posedness result with energy estimates (Theorem \ref{thm:bounded-velocity-wellposedness}).

When $\Omega$ is a bounded domain, we assume its boundary is $C^{1,\alpha_V}$, where 
\begin{equation}\label{eq:alpha}
    \alpha_V \coloneqq 
    \begin{cases}
        1   &V=\Rg, \\
        1/3 &V=\Sd, \V.
    \end{cases}
\end{equation}
The natural $L^2(\partial\D,|n_x \cdot v|)$ trace estimate for the solution space $\Hyp(\D)$ of the Kolmogorov equation \eqref{eq:kolm_both} on $\D= \Omega \times V$ does not hold when $\Omega$ is $C^{1,1}$ and $V=\Rg$; it holds for $C^{1,1/2}$ domains when $V=\Sd,\V$, and fails for some domain with lower regularity \cite{Niebel2026SharpTheory}. In this work, we give well-posedness results despite the (possible) failure of the natural trace estimate. However, even though the definition of problems \ref{prob:trace} and \ref{prob:refl}, the functional framework provided by $\Hyptr(\D)$ and $\Hyprefl(\D)$, and the key Poincaré inequality (Theorem \ref{th:poincare1}) require merely Lipschitz regularity for $\Omega$, we were not able to prove well-posedness for $\Omega$ Lipschitz, and we leave it as an open question. In fact, we still rely on some sub-optimal trace operator to prove the energy estimates of the well-posedness results: the $C^{1,\alpha_V}$ regularity assumption ensures that the trace operator with weight $|n_x \cdot v|^2$  is bounded \cite{Niebel2026SharpTheory}.

In Section \ref{sec:results}, we define the three problems for the spherical and Gaussian velocity models; we state the main results and discuss them in relation to the previous literature. In Section \ref{sec:hypo_setup}, we analyse the hypoelliptic solution space $\Hyptr(\D)$ and address the Poincaré-type inequality. In Section \ref{sec:renorm}, we give a renormalisation formula for solutions and deduce comparison and maximum principles. In Section \ref{sec:well-posedness_proof}, we finally prove well-posedness and the one-sided trace estimate. In Section \ref{sec:bounded}, we prove energy estimates for the confined-velocity model.

\subsubsection{Notation}
We use $(\Omega, dx)$ to denote either a bounded Lipschitz domain in $\R^d$ or the torus $\torus=\R^d / \Z^d$, with Lebesgue measure. Throughout Sections \ref{sec:results}--\ref{sec:well-posedness_proof}, we denote by $(V,dv)$ either the unit sphere $(\Sd,d\theta)$ or the Gaussian space $(\Rg,d\gamma)$ with their respective measure. For $V=\Sd$, ${\Delta}_v$ and ${\nabla}_v$ denote the Laplace-Beltrami operator and the tangent gradient, respectively. For either $V$, we write
$$\D \coloneqq \Omega \times {V}$$ 
for the phase space, and we shorten $dz \coloneqq dx\,dv$ on $\D$. We adopt the following notation for the Bochner space
\begin{equation*}
    L_x^2H^1_v(\D) \coloneqq L^2(\Omega; H^1({V})),
    \qquad 
    \|f\|_{L_x^2H^1_v(\D)} = \|f\|_{L^2(\D)} + \|{\nabla}_vf\|_{L^2(\D)}.
\end{equation*}
Since ${V}$ is without boundary, we also have
\begin{gather*}
    L^2_xH^{-1}_v(\D) \coloneqq (L^2_xH^1_v(\D))',
    \\
    H^1_0(\D)= L^2(\Omega,H^1({V}))\cap L^2({V},H^1_0(\Omega)).
\end{gather*}
For $T\in L^2_xH^{-1}_v(\D)$ and $f\in L^2_xH^1_v(\D)$, we use $\langle T, f\rangle$ to denote the dual pairing
\begin{equation*}
    \langle T,f \rangle 
    \coloneqq \int_\Omega \langle T(x, \cdot), f(x, \cdot) \rangle_{H^{-1}(V), H^1(V)} \, dx.
\end{equation*}

For a function $h \in L^1(\D)$, we denote the averages on $\D$ and ${V}$ by
\begin{equation*}
    \langle h \rangle_\D \coloneqq \frac{1}{|\D|}\int_{\D} h(x,v) \ dz,
    \qquad \text{and} \qquad
    \langle h \rangle_{{V}}(x) \coloneqq \frac{1}{|V|}\int_{{V}} h(x,v) \ dv.
\end{equation*}
For $T\in L^2_xH^{-1}_v(\D)$ we write 
\begin{equation*}
    \langle T \rangle_\D \coloneqq \frac{1}{|\D|} \langle T,1\rangle.
\end{equation*}

For $\Omega$ a bounded Lipschitz domain, the phase-space boundary is $$\Gamma \coloneqq \partial\Omega \times {V},$$ and we shorten $d\sigma \coloneqq ds \, dv$, where $ds$ is the Hausdorff measure on $\partial\Omega$. We denote by $\Tr_x$ the standard $x$-trace operator for $H^1(\D)$. The boundary $\Gamma$ splits into the inflow part $\Gamma_-$, the outflow part $\Gamma_+$, and the grazing part $\Gamma_0$,
\begin{equation*}
    \Gamma_{\pm} \coloneqq \{(x,v)\in\Gamma \mid \pm n_x \cdot v > 0 \}
    \quad \text{and} \quad 
    \Gamma_0 \coloneqq \{(x,v)\in\Gamma \mid n_x \cdot v = 0 \},
\end{equation*}
where $n_x$ is the unit outward normal vector at $x$, which is well-defined for a.e. $x \in\partial\Omega$ -- see Figure \ref{fig:geometry}.
We will abbreviate $L^2(\Gamma, |n_x \cdot v|) \coloneqq L^2(\Gamma, |n_x \cdot v| \, d\sigma)$.

\begin{figure}[t]
\centering
\includegraphics[width=0.3\textwidth]{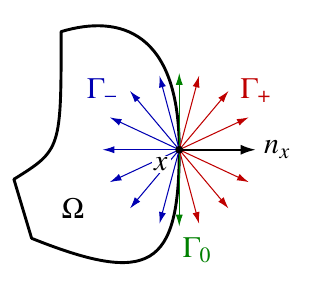}
\caption{Vectors $v\in {V}$ such that $(x,v)$ belongs to $\Gamma_+$, $\Gamma_-$ or $\Gamma_0$ respectively.}
\label{fig:geometry}
\end{figure}

Finally, we write $a \lesssim b$ if there exists a constant $C(\Omega,d)>0$ such that $ a \leq C(\Omega,d) b$, and subscripts will highlight any further dependence -- for instance  $a \lesssim_c b$ indicates that the implicit constant $C(\Omega,d,c)$ depends on $c$ as well.

\bigskip
\noindent {\bf Acknowledgements and AI tools disclosure.} 
We would like to thank Clément Mouhot for his valuable guidance and suggestions. The author is supported by a Gates Cambridge Scholarship issued by the Gates Cambridge Trust. 
Gemini was used to search for relevant literature on radiative and neutron transport. Outside of this AI tool use, the text
of this paper was human-generated.

\section{Results and background}
\label{sec:results}

We study the Kolmogorov equation \eqref{eq:kolm_both} with $S\in L_x^2H_v^{-1}(\D)$ and $c \in \R$. All results are valid both for $V=\Sd$ and for $V=\Rg$.

\subsection{The problems}
\label{sec:problems}
First, consider the case where $\Omega$ is a bounded domain. 

\begin{definition}[Solution in the interior of $\D$] \label{def:interior}
    Let $\Omega$ be a bounded Lipschitz domain, $S\in L_x^2H_v^{-1}(\D)$ and $c \in \R$. A \emph{weak solution to \eqref{eq:kolm_both} in the interior of $\D$} is a function $f \in L^2_xH^1_v(\D)$ that, for every $\varphi\in C^1_c({\D})$, satisfies
\begin{equation}
    \langle S,  \varphi \rangle
    = \int_{\D} \left[{\nabla}_vf \cdot {\nabla}_v\varphi - f(v\cdot \nabla_x \varphi) + c f \varphi \right] dz.
    \label{eq:kolm_rad_weak_interior}
\end{equation}
\end{definition}

\begin{definition}[Solution in $\D$ up to the boundary] \label{def:up_to_boundary}
    Let $\Omega$ be a bounded Lipschitz domain, $S\in L_x^2H_v^{-1}(\D)$ and $c \in \R$. A \emph{weak solution to \eqref{eq:kolm_both} in $\D$ up to the boundary} is a function $f\in L^2_xH^1_v(\D)$ for which there exists a (unique) $f_{\Gamma} \in L^2(\Gamma, |n_x \cdot v|)$, called \emph{trace} of $f$, such that, for every $\varphi\in C^1_c(\overline{\D})$, the pair $(f,f_\Gamma)$ satisfies
    \begin{equation}\label{eq:kolm_rad_weak}
        \langle S , \varphi \rangle
        = \int_{\D} \left[{\nabla}_vf \cdot {\nabla}_v\varphi - f(v\cdot \nabla_x \varphi) + c f \varphi\right] dz
        + \int_{\Gamma} (n_x \cdot v) f_{\Gamma} \varphi \ d\sigma.
    \end{equation}
\end{definition}

Let us introduce the boundary conditions.

\begin{problem}{\ktr($\Bo$)}[Boundary value problem on bounded $\Omega$, with trace]\label{prob:trace}
    Let $\Omega$ be a bounded Lipschitz domain, $S\in L_x^2H_v^{-1}(\D)$ and $c \in \R$. Consider a boundary operator $$\Bo : L^2(\Gamma, |n_x \cdot v|) \to L^2(\Gamma_-, |n_x \cdot v|).$$ A \emph{weak solution to \eqref{eq:kolm_both} in $\D$ with trace satisfying boundary condition $f=\Bo f$ on $\Gamma_-$} is a weak solution in $\D$ up to the boundary such that $f_{\Gamma}=\Bo f_{\Gamma}$ on $\Gamma_-$.
\end{problem}

We consider the following boundary operators. Let $h \in L^2(\Gamma, |n_x \cdot v|)$.
\begin{itemize}
    \item {\bf Inflow boundary operator:}
    \begin{equation*}
        \Bo_G h(x,v) \coloneqq G(x,v) \qquad \text{for } (x,v)\in\Gamma_-
        \csdef{@currentlabel}{\ktr($\Bo_G$)} \label{prob:trace_G}
    \end{equation*}
    for a boundary function $G \in L^{2}(\Gamma_-, |n_x\cdot v|)$. The case $G=0$ is called the absorbing boundary condition.
    \item {\bf Specular reflection operator:}
    \begin{equation*}
        \Bo_\Ro h(x,v) \coloneqq h(x, v-2(n_x \cdot v) n_x) \qquad \text{for } (x,v)\in\Gamma_-.
        \csdef{@currentlabel}{\ktr($\Bo_\Ro$)} \label{prob:trace_Ro}
    \end{equation*}
    \item {\bf Periodic boundary condition on $\Omega = (0,1)^d$:}
    \begin{equation*}
        \Bo_\To h(x,v) \coloneqq 
        \begin{cases}
            h(x+e_i,v)  &   \text{if} \ x_i=0 \\
            h(x-e_i,v)  &   \text{if} \ x_i=1
        \end{cases}
        \qquad \text{for } (x,v)\in\Gamma_-.
        \csdef{@currentlabel}{\ktr($\Bo_\To$)} \label{prob:trace_To}
\end{equation*}
\end{itemize}

\begin{remark}
    Whenever we consider $\Bo_G$, we are implicitly fixing a boundary function $G \in L^{2}(\Gamma_-, |n_x\cdot v|)$; when we consider $\Bo_\To$, we implicitly assume $\Omega = (0,1)^d$.
\end{remark} 

Define the following subspace of $C^1_c(\overline{\D})$ of functions satisfying specular reflection:
\begin{gather*}
    \Crefl \coloneqq \{\varphi\in C^1_c(\overline{\D}) \mid \varphi=\Bo_\Ro \varphi \ \text{on} \ \Gamma_-\}.
\end{gather*}

Let us now introduce two problems without boundary and therefore without trace.

\begin{problem}{\kre}[Specular reflection, without trace]
\label{prob:refl}
    Let $\Omega$ be a bounded Lipschitz domain, $S\in L_x^2H_v^{-1}(\D)$ and $c \in \R$. A \emph{weak solution to \eqref{eq:kolm_both} on $\D$ satisfying the specular reflection boundary condition} is a function $f\in L^2_xH^1_v(\D)$ that, for every $\varphi\in\Crefl$, satisfies
    \begin{equation}\label{eq:kolm_rad_weak_refl}
        \langle S , \varphi \rangle
        = \int_{\D} \left[{\nabla}_vf \cdot {\nabla}_v\varphi - f(v\cdot \nabla_x \varphi) + c f \varphi \right] dz.
    \end{equation}
\end{problem}

\begin{problem}{\kto}[Periodic on $\torus$, without trace]
\label{prob:torus}
    Let $\Omega = \torus$, $S\in L_x^2H_v^{-1}(\D)$ and $c \in \R$. A \emph{weak solution to \eqref{eq:kolm_both} on $\D$} is a function $f \in L^2_xH^1_v(\D)$ that, for every $\varphi\in C^1_c(\D)$, satisfies
    \begin{equation}\label{eq:kolm_rad_weak_torus}
        \langle S , \varphi \rangle
        = \int_{\D} \left[{\nabla}_vf \cdot {\nabla}_v\varphi - f(v\cdot \nabla_x \varphi) + c f \varphi \right] dz.
    \end{equation}
\end{problem}

The \emph{inflow boundary problem} is handled by Problem \ref{prob:trace_G}, which is well-posed for $c\geq0$ (Theorem \ref{th:wellpos_inflow_initial}). The \emph{specular reflection boundary problem} is described by Problem \ref{prob:refl}{} (without trace) and by Problem \ref{prob:trace_Ro} (with trace); similarly, the \emph{periodic problem on $\torus$} is described by Problem \ref{prob:torus}{} (without trace) and Problem \ref{prob:trace_To} on $\Omega =(0,1)^d$ (with trace). 
The reason behind the double formalisation of the reflection problem and the periodic problem is easily explained: the versions without trace are well-posed for $c\geq0$ (Theorem \ref{th:wellpos_refl_initial}) and they are the natural formulations for both problems; however, they do not carry any information on the $L^2(\Gamma, |n_x \cdot v|)$ norm of the solution at the boundary, which is recovered for essentially bounded solutions through the formulations with trace (Proposition \ref{th:extend_to_boundary_BC}).

\begin{remark}\label{rm:sol_boundary_probl}
    If $f$ is a weak solution in $\D$ up to the boundary in the sense of Definition \ref{def:up_to_boundary}, it can be seen as a solution to Problem \ktr{($\Bo_{f_\Gamma}$)}. Moreover, a solution to Problem \ref{prob:trace_Ro} or \ref{prob:trace_To} is, in particular, a solution to \ref{prob:refl}{} or \ref{prob:torus}{} respectively.
\end{remark}

\subsection{Main results}
\label{sec:main_results}
Let us introduce the following hypoelliptic spaces.
\begin{definition}\label{def:Hyp}
    Let $\Omega$ be either a bounded Lipschitz domain or $\torus$. Define the space
    \begin{equation*}
    \Hyp(\D) 
    \coloneqq \left\{ f \in L^2_xH^1_v(\D) \mid v \cdot \nabla_xf \in L^2_xH^{-1}_v(\D)\right\},
\end{equation*}
endowed with the norm
$$\|f\|_{\Hyp(\D)} \coloneqq \|f\|_{L^2(\D)} + \|{\nabla}_v f\|_{L^2(\D)} + \|v \cdot \nabla_xf\|_{L^2_xH^{-1}_v(\D)}.$$
\end{definition}

For clarity, let us specify that the transport operator $v \cdot \nabla_x f$ is defined in a dual sense on $C^1_c(\D)$, and the assumption $v \cdot \nabla_xf \in L^2_xH^{-1}_v(\D)$ means that it enjoys a unique extension on $L^2_xH^1_v(\D)$, where the uniqueness is due to the density of $C^1_c(\D)$ in $L^2_xH^1_v(\D)$.

For a bounded $\Omega$, it is immediate to see that weak solutions to \eqref{eq:kolm_both} in the interior of $\D$ (Definition \ref{def:interior}) belong to $\Hyp(\D)$. Indeed, if $f\in L^2_xH^1_v(\D)$ is such a solution, from the weak formulation \eqref{eq:kolm_rad_weak_interior} we get
\begin{equation} \label{eq:explain_bounded}
\begin{aligned}
    \left| \langle v \cdot \nabla_x f, \varphi \rangle \right|
    &=
    \left|\langle S , \varphi \rangle - \int_{\D} \left[{\nabla}_vf \cdot {\nabla}_v\varphi +c f \varphi \right] dz \right| \\
    &\leq \left( \|S\|_{L^2_xH^{-1}_v(\D)} + |c|\|f\|_{L^2(\D)} + \|{\nabla}_vf\|_{L^2(\D)} \right) \|\varphi\|_{L^2_xH^1_v(\D)}
\end{aligned}
\end{equation}
for $\varphi\in C^1_c({\D})$, and $C^1_c({\D})$ is dense in $L^2_xH^{1}_v(\D)$. Similarly, for $\Omega=\torus$, solutions to Problem \ref{prob:torus}{} lie in $\Hyp(\D)$ (same argument from the weak formulation \eqref{eq:kolm_rad_weak_torus}).

\begin{definition}\label{def:hypA}
    Let $\Omega$ be a bounded Lipschitz domain. The space
    $$\Hyptr(\D)$$
    consists of functions $f\in\Hyp(\D)$ for which there exists a function $f_{\Gamma} \in L^2(\Gamma, |n_x \cdot v|)$,
    called \emph{trace} of $f$, such that $v\cdot\nabla_xf$ satisfies
    \begin{equation}\label{eq:transA}
    \langle v \cdot \nabla_xf, \varphi  \rangle
    = \int_\Gamma (n_x \cdot v) f_{\Gamma} \varphi \ d\sigma - \int_\D f ( v\cdot \nabla_x \varphi) \ dz,
    \qquad \varphi\in C^1_c(\overline{\D}).
\end{equation}
It is endowed with the norm
\begin{equation*}
    \|f\|_{\Hyptr(\D)} \coloneqq 
    \|f\|_{L^2(\D)} + \|{\nabla}_v f\|_{L^2(\D)}  + \|v\cdot\nabla_xf\|_{L^2_xH^{-1}_v(\D)} + \|f_{\Gamma}\|_{L^2(\Gamma,|n_x \cdot v|)}.
\end{equation*}
Define also the subspaces of functions whose trace vanishes on $\Gamma_-$ or $\Gamma _+$, respectively:
\begin{equation*}
    \Hyptrpm(\D)
    \coloneqq
    \left\{ f \in \Hyptr(\D) \mid f_{\Gamma}=0 \ \text{on} \ \Gamma_\mp \right\}.
\end{equation*}
\end{definition}

For $f\in \Hyptr(\D)$ its trace $f_{\Gamma}$ is unique, since the traces of $C^1_c(\overline{\D})$ are dense in $L^2(\Gamma)$. The assumptions $v\cdot\nabla_xf\in L^2_xH^{-1}_v(\D)$ and \eqref{eq:transA} impose that $v\cdot \nabla_xf$ extends to $\varphi\in L^2_xH^1_v(\D)$ in a way that agrees with integration by parts when $\varphi\in C^1_c(\overline{\D})$.

Weak solutions to \eqref{eq:kolm_both} in $\D$ up to the boundary (Definition \ref{def:up_to_boundary}) lie in $\Hyptr(\D)$; in particular, for any boundary operator $\Bo$, solutions to Problem \ref{prob:trace} lie in $\Hyptr(\D)$. Indeed, let $f\in L^2_xH^1_v(\D)$ be a solution up to the boundary, with trace $f_\Gamma\in L^2(\Gamma,|n_x \cdot v|)$, and consider the linear operator $T_{f,f_\Gamma}:C^1_c(\overline{\D})\to \R$ defined by the right-hand side of \eqref{eq:transA}. Exactly as shown in \eqref{eq:explain_bounded}, from the weak formulation \eqref{eq:kolm_rad_weak} we get
\begin{align*}
    \left| \langle T_{f,f_\Gamma}, \varphi \rangle \right|
    &\leq \left( \|S\|_{L^2_xH^{-1}_v(\D)} + |c|\|f\|_{L^2(\D)} + \|{\nabla}_vf\|_{L^2(\D)} \right) \|\varphi\|_{L^2_xH^1_v(\D)}
\end{align*}
for $\varphi\in C^1_c(\overline{\D})$. Since $C^1_c(\overline{\D})$ is dense in $L^2_xH^{1}_v(\D)$, we have $T_{f,f_\Gamma}\in L^2_xH^{-1}_v(\D)$. Moreover, $f$ is also a solution in the interior of $\D$, and therefore $f\in\Hyp(\D)$ and $v \cdot \nabla_xf \in L^2_xH^{-1}_v(\D)$. The two operators $T_{f,f_\Gamma}$ and $v \cdot \nabla_xf$ coincide on $C^1_c(\D)$, and thus on $L^2_xH^1_v(\D)$ by density and boundedness. Therefore, $v \cdot \nabla_xf$ satisfies \eqref{eq:transA}.

Our two main results are a one-sided trace estimate and a Poincaré-type inequality for functions in $\Hyptr(\D)$. They are proved in Sections \ref{sec:trace_theorem} and \ref{sec:poincare} respectively.

\begin{theorem}[One-sided trace estimate for $\Hyptr(\D)$] \label{th:strong_trace_0}
    Let $\Omega$ be a {$C^{1,\alpha_V}$} bounded domain. Then, for $f\in\Hyptr(\D)$, it holds
    \begin{gather}
    \label{eq:trace_partial1}
        \|f_\Gamma\|_{L^{2}(\Gamma_+, |n_x\cdot v|)} \lesssim  \|f\|_{\Hyp(\D)} +\|f_\Gamma\|_{L^{2}(\Gamma_-, |n_x\cdot v|)}, \\
    \label{eq:trace_partial2}
        \|f_\Gamma\|_{L^{2}(\Gamma_-, |n_x\cdot v|)} \lesssim  \|f\|_{\Hyp(\D)} +\|f_\Gamma\|_{L^{2}(\Gamma_+, |n_x\cdot v|)}.
    \end{gather}
    In particular, the trace operators for $\Hyptrp(\D)$ and $\Hyptrm(\D)$ respectively
    \begin{gather*}
        (-)_{\Gamma} \colon \Hyptrpm(\D) \to L^{2}(\Gamma, |n_x\cdot v|), \quad f \to f_\Gamma
    \end{gather*}
    are bounded.
\end{theorem}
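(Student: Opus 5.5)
We would prove the theorem by testing the defining identity \eqref{eq:transA} against $\varphi = f\,\psi$, where $\psi$ is a cut-off that is (essentially) $n_x\cdot v$ near the boundary. Formally, for smooth $f$ one has
\[
\int_\Gamma (n_x\cdot v)\, f_\Gamma^2\, \psi \, d\sigma = 2\langle v\cdot\nabla_x f, f\psi\rangle + \int_\D f^2\, (v\cdot\nabla_x\psi)\, dz .
\]
Choosing $\psi$ with $\psi|_\Gamma\ge 0$ and $\psi|_\Gamma \approx \mathbf 1_{\Gamma_+}$ would give the outflow estimate \eqref{eq:trace_partial1}. Choosing the simpler $\psi \equiv 1$ gives $\int_\Gamma (n_x\cdot v) f_\Gamma^2\,d\sigma = 2\langle v\cdot\nabla_x f, f\rangle$, hence
\[
\|f_\Gamma\|^2_{L^2(\Gamma_+,|n_x\cdot v|)} = \|f_\Gamma\|^2_{L^2(\Gamma_-,|n_x\cdot v|)} + 2\langle v\cdot\nabla_xf,f\rangle \le \|f_\Gamma\|^2_{L^2(\Gamma_-,|n_x\cdot v|)} + 2\|f\|^2_{\Hyp(\D)} .
\]
This yields both \eqref{eq:trace_partial1} and \eqref{eq:trace_partial2} at once, after taking square roots and using $\sqrt{a+b}\le\sqrt a+\sqrt b$. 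Boundedness of the trace on $\Hyptrpm(\D)$ then follows immediately, since one of the two one-sided norms vanishes.

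The whole difficulty is therefore to justify the Green identity
\[
2\langle v\cdot\nabla_xf,f\rangle = \int_\Gamma (n_x\cdot v) f_\Gamma^2\, d\sigma \qquad \text{for } f\in\Hyptr(\D).
\]
This is nontrivial because \eqref{eq:transA} is only known for test functions in $C^1_c(\overline{\D})$, while $f$ is merely in $L^2_xH^1_v(\D)$. The plan is to approximate $f$ by $f_\varepsilon\in C^1_c(\overline\D)$, built via a regularisation adapted to the $C^{1,\alpha_V}$ boundary. We would flatten the boundary locally, mollify in $x$ with a kernel shifted inward, mollify in $v$, and truncate in $v$ for $V=\Rg$, so that $f_\varepsilon\to f$ in $L^2_xH^1_v(\D)$. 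We then want $\langle v\cdot\nabla_x f, f_\varepsilon\rangle \to \langle v\cdot\nabla_xf,f\rangle$, which is automatic from the $L^2_xH^{-1}_v$ bound. On the other side of \eqref{eq:transA} we must show
\[
\int_\Gamma (n_x\cdot v) f_\Gamma f_\varepsilon\, d\sigma - \int_\D f\, v\cdot\nabla_x f_\varepsilon\, dz \longrightarrow \int_\Gamma (n_x\cdot v) f_\Gamma^2\,d\sigma - \langle v\cdot\nabla_x f, f\rangle .
\]
The interior term is handled by a commutator (Friedrichs-type) lemma, $\int_\D f\, v\cdot\nabla_x f_\varepsilon = -\int_\D f_\varepsilon\, v\cdot\nabla_x f + {}$(boundary)$+o(1)$. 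The boundary term requires $(f_\varepsilon)|_\Gamma\to f_\Gamma$ in $L^2(\Gamma,|n_x\cdot v|)$.

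The main obstacle is controlling the boundary values of the mollified functions. Here we would use the suboptimal trace result quoted from \cite{Niebel2026SharpTheory}: under the $C^{1,\alpha_V}$ assumption, the trace $\Hyp(\D)\to L^2(\Gamma,|n_x\cdot v|^2)$ is bounded. This gives convergence of $(f_\varepsilon)|_\Gamma$ in the weaker weighted space, which we would identify with $f_\Gamma$ by testing \eqref{eq:transA} against $f_\varepsilon\varphi$. The cost is the loss of one power of the weight near grazing. To close the argument, we would first run the argument with the cut-off $\psi=\chi_\delta(n_x\cdot v)$ vanishing near $\Gamma_0$, so that all weights are comparable and the weaker trace bound suffices. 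This gives the identity with $\chi_\delta$ inserted, with the error term $\int f^2\, v\cdot\nabla_x\chi_\delta$. We would then let $\delta\to0$ using monotone convergence on each of $\Gamma_\pm$ separately: the known finiteness of $\|f_\Gamma\|_{L^2(\Gamma,|n_x\cdot v|)}$, which is part of the definition of $\Hyptr(\D)$, makes dominated convergence available. If the interior error from the cut-off is not uniformly controlled, we would instead choose $\psi$ depending only on $v\cdot n(x)$ extended smoothly into $\Omega$ via the distance function (which is $C^{1,\alpha_V}$), so that $v\cdot\nabla_x\psi$ stays bounded.
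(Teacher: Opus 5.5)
Your reduction is correct, and it is in fact what the paper proves implicitly. With $c=1$ and $S=v\cdot\nabla_xf-\A_vf+f$, the renormalised energy identity behind \eqref{eq:energy1} collapses to $2\langle v\cdot\nabla_xf,f\rangle=\int_\Gamma(n_x\cdot v)f_\Gamma^2\,d\sigma$. You also pick the right tools: the translated mollification and the $\omega_2$-weighted trace. The gap is in how you justify the identity, specifically in removing the grazing cut-off. For fixed $\delta$ your identity
\[
\int_\Gamma (n_x\cdot v)f_\Gamma^2\psi_\delta\,d\sigma=2\langle v\cdot\nabla_xf,f\psi_\delta\rangle+\int_\D f^2\,v\cdot\nabla_x\psi_\delta\,dz
\]
is fine, but the limit $\delta\to0$ is not.

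Any $\psi_\delta$ that vanishes near $\Gamma_0$ and tends to $1$ must make its transition in $n\cdot v$ on a scale $\delta$. Hence $|\nabla_v\psi_\delta|\sim\delta^{-1}$ there, and $\|f\nabla_v\psi_\delta\|_{L^2(\D)}$ is of order $\delta^{-1}\|f\|_{L^2(\{|n\cdot v|\lesssim\delta\})}$. This is unbounded already for $f\equiv1$ with your global cut-off. So $f\psi_\delta$ is not bounded in $L^2_xH^1_v(\D)$, and nothing forces $\langle v\cdot\nabla_xf,f\psi_\delta\rangle\to\langle v\cdot\nabla_xf,f\rangle$; you do not address this term at all. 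Localising the cut-off to a layer $\{\dist(x,\partial\Omega)<\eta\}$ does not escape the problem. It produces an $x$-derivative of size $\delta/\eta$ on $\{\dist(x,\partial\Omega)<\eta,\ |n\cdot v|<\delta\}$, in addition to the $\delta^{-1}$ velocity gradient. In either arrangement, proving that the error terms vanish amounts to controlling the near-grazing flux of $f$, which is exactly the information you are trying to establish; recall that the $|n_x\cdot v|$-weighted trace fails on $\Hyp(\D)$. Your fallback does not help. The bound on $v\cdot\nabla_x\psi_\delta$ is not uniform in $\delta$, and for $\alpha_V=1/3$ the distance function need not even be $C^1$ near $\partial\Omega$.

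The missing idea is to truncate in the \emph{value} of $f$ rather than in velocity, as in Lemma~\ref{th:normalisation_kolm}. Let $\chi_k$ equal $t^2$ on $[-k,k]$ and be linear outside. For the translated mollification $f_\varepsilon$, which is smooth in $x$, one has exactly (for $\phi$ localised in a chart)
\[
\langle v\cdot\nabla_xf_\varepsilon,\chi_k'(f_\varepsilon)\phi\rangle=-\int_\D\chi_k(f_\varepsilon)\,v\cdot\nabla_x\phi\,dz+\int_\Gamma(n_x\cdot v)\chi_k(f_\varepsilon)\phi\,d\sigma .
\]
Since $\chi_k$ is Lipschitz, the boundary term is linear in $|f_\varepsilon-\tr_xf|$. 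The Cauchy--Schwarz step \eqref{eq:renorm_conclusion} then shows that convergence in $L^2(\Gamma,\omega_2)$ suffices \emph{across} $\Gamma_0$, with no grazing cut-off and without using $f_\Gamma$ at all. Only afterwards do you identify $\tr_xf=f_\Gamma$ and let $k\to\infty$ and $\phi=\phi_R\to1$. In that limit $\chi_k'(f)\phi_R\to2f$ in $L^2_xH^1_v(\D)$. The boundary term converges by monotone convergence on $\Gamma_+$ and $\Gamma_-$ separately, using $f_\Gamma\in L^2(\Gamma,|n_x\cdot v|)$. This is the paper's route, packaged as follows: regard $f$ as the solution to the inflow problem with $c=1$, source $v\cdot\nabla_xf-\A_vf+f$ and inflow datum $f_\Gamma|_{\Gamma_-}$, and apply \eqref{eq:energy1}.
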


\begin{theorem}[Poincaré inequality for $\Hyptr(\D)$] \label{th:poincare1}
    Let $\Omega$ be a bounded Lipschitz domain such that $\partial\Omega$ is $C^1$ at least at one point. Then, for $f \in \Hyptr(\D)$, it holds
    \begin{equation}\label{eq:poincare}
        \|f \|_{L^2(\D)} \lesssim
        \|{\nabla}_v f\|_{L^2(\D)}  + \|v\cdot\nabla_xf\|_{L^2_xH^{-1}_v(\D)} + \|f_\Gamma\|_{L^{2}(\Gamma_-, |n_x \cdot v|)}.
    \end{equation}
    In particular,
    \begin{equation}
    \label{eq:zz_mean_poincare}
        |\langle f \rangle_\D| \lesssim \|{\nabla}_{v} f\|_{L^{2}(\D)} + \|v\cdot\nabla_xf \|_{L^{2}_x H^{-1}_v(\D)} + \|f_\Gamma\|_{L^{2}(\Gamma_-, |n_x \cdot v|)} .
    \end{equation}
\end{theorem}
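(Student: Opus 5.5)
The plan is to prove the average bound \eqref{eq:zz_mean_poincare} first, by testing \eqref{eq:transA} against a well-chosen explicit test function. We then combine it with a compactness-free "averaging" argument, or alternatively a contradiction/compactness argument, to get \eqref{eq:poincare}.

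\textbf{Step 1: the average.} Let $x_0\in\partial\Omega$ be a point where $\partial\Omega$ is $C^1$, with normal $n_0$. Take a test function $\varphi(x,v)=\psi(x)\chi(v)$ with $\chi$ smooth and bounded with bounded gradient, chosen so that $v\cdot\nabla_x\varphi$ is essentially constant. The simplest choice is $\varphi(x,v)=(x\cdot e)\,\chi(v)$ for a fixed direction $e$. Then $v\cdot\nabla_x\varphi=(v\cdot e)\chi(v)$, and we choose $\chi$ so that $\langle (v\cdot e)\chi\rangle_V\neq 0$; for instance $\chi(v)=v\cdot e$ on $\Sd$, or truncated for $\Rg$ with a Gaussian-integrable weight.

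Plugging this into \eqref{eq:transA} gives
\[
\int_\D f\,(v\cdot e)\chi\,dz=\int_\Gamma (n_x\cdot v)f_\Gamma\varphi\,d\sigma-\langle v\cdot\nabla_x f,\varphi\rangle .
\]
On the left we decompose $f=\langle f\rangle_V+(f-\langle f\rangle_V)$. The fluctuation is controlled by the Poincaré inequality on $V$, which gives the bound $\|\nabla_v f\|_{L^2}$. This shows that $\int_\Omega\langle f\rangle_V\,dx$ is not directly obtained, so the test function must be modified: use $\varphi=\psi(x)\chi(v)$ with $\psi$ solving $v$-averaged transport.

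The better route is to choose $\varphi(x,v)=\Phi(x)\cdot v$ with $\Phi$ a vector field. Then $\langle v\cdot\nabla_x\varphi\rangle_V=c_d\,\mathrm{div}\,\Phi$, and we pick $\Phi$ with $\mathrm{div}\,\Phi=1$ in $\Omega$. The difficulty is the boundary term. It contains $\Phi\cdot v$ on $\Gamma_+$, where $f_\Gamma$ is uncontrolled. This is why we need $\Phi\cdot v\le 0$ (or $\Phi\cdot v=0$) on $\Gamma_+$, forcing $\Phi=-a(x)n_x$ near $\partial\Omega$ with $a\geq 0$. This is where the local $C^1$ point enters.

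\textbf{Key choice and main obstacle.} Replacing the linear-in-$v$ test function by $\varphi=\Phi(x)\cdot v$ multiplied by a cutoff, or more generally $\varphi=\eta(x,v)\ge0$ vanishing on $\Gamma_+$, we need $\langle v\cdot\nabla_x\varphi\rangle_V$ to have a nonzero integral. At the same time, the terms $\int f\,(v\cdot\nabla_x\varphi-\langle\cdot\rangle_V)$ must be absorbed by $\|\nabla_v f\|$ via the $V$-Poincaré inequality. Concretely, we take $\varphi(x,v)=\rho(x)\,\zeta\big(v\cdot n_0\big)$ with $\zeta(s)=(-s)_+^2$-type smooth, so that $\varphi=0$ whenever $v\cdot n_0\ge 0$, and $\rho$ supported near $x_0$, where $n_x\approx n_0$ by $C^1$-ness, up to a small angular margin built into $\zeta$. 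The quantity $\int_\D\langle f\rangle_V\, v\cdot\nabla_x\varphi\,dz=\int_\Omega\langle f\rangle_V\,\nabla\rho\cdot\langle v\zeta\rangle_V\,dx$ only gives weighted integrals of $\langle f\rangle_V$, not the global average.

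To close the argument, I would combine this with the interior fact that $\langle f\rangle_V$ has $\nabla_x\langle f\rangle_V$ controlled in $H^{-1}_x$ by the right-hand side. This follows by testing with $\varphi=\Phi(x)\cdot v$, $\Phi\in C^1_c(\Omega)$, giving $\|\nabla_x\langle f\rangle_V\|_{H^{-1}(\Omega)}\lesssim\|\nabla_vf\|+\|v\cdot\nabla_xf\|_{L^2_xH^{-1}_v}$. Nečas' inequality on the Lipschitz domain then yields $\|\langle f\rangle_V-m\|_{L^2(\Omega)}\lesssim$ RHS, with $m$ the mean. Applying the boundary test function above to $f=m+(\text{controlled})$ isolates $m\int\nabla\rho\cdot\langle v\zeta\rangle_V\neq0$ against the boundary term on $\Gamma_-$ only, giving \eqref{eq:zz_mean_poincare}. \eqref{eq:poincare} follows by summing the three pieces $f-\langle f\rangle_V$, $\langle f\rangle_V-m$ and $m$. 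The main obstacle is making the sign/support condition $\varphi|_{\Gamma_+}=0$ compatible with $\int\nabla\rho\cdot\langle v\zeta\rangle_V\ne0$ near a merely $C^1$ point; this is where I would spend the care, arranging $\rho$ to decrease toward the boundary along $n_0$.
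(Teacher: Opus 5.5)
Your final argument is essentially the paper's proof. The $V$-Poincaré inequality, the $H^{-1}_x$ bound on $\nabla_x\langle f\rangle_V$ and the Nečas (Bogovskii) step together are exactly Theorem~\ref{th:poincare2}, and the mean is then isolated by testing \eqref{eq:transA} against a function vanishing on $\Gamma_+$ near the $C^1$ boundary point, as in Lemma~\ref{th:lemma_poincare}.

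The ``main obstacle'' you flag is actually immediate: by the divergence theorem, $\int_\D v\cdot\nabla_x\varphi\,dz=\int_\Gamma (n_x\cdot v)\varphi\,d\sigma<0$ for any $\varphi\ge0$ vanishing on $\Gamma_+$ and positive somewhere on $\Gamma_-$. For $V=\Rg$ you should also cut $\varphi$ off in $v$, so that it lies in $C^1_c(\overline{\D})$ and your fixed margin really forces $\varphi=0$ on $\Gamma_+$.
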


Our third main result is well-posedness for the inflow boundary value problem; it is proved in Section \ref{sec:well_inflow}.

\begin{theorem}[Well-posedness of the inflow boundary value problem]\label{th:wellpos_inflow_initial}
    Let $\Omega$ be a {$C^{1,\alpha_V}$} bounded domain. Let {$c\geq0$}, $S\in L_x^2H_v^{-1}(\D)$ and $G \in L^{2}(\Gamma_-, |n_x\cdot v|)$. Then, there exists a unique solution $f$ to Problem \ref{prob:trace_G}. It satisfies $f\in\Hyptr(\D)$ and
    \begin{equation}\label{eq:energy1}
        \|f\|_{\Hyptr(\D)} \lesssim_c \|S\|_{L^2_xH_v^{-1}(\D)} + \|G\|_{L^{2}(\Gamma_-, |n_x\cdot v|)}.
    \end{equation}
\end{theorem}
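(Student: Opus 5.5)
We prove existence through a Lions/Lax--Milgram type argument for the damped problem, derive uniform a priori bounds, and handle $c=0$ with the Poincaré inequality (Theorem~\ref{th:poincare1}). Uniqueness follows from the energy identity.

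\emph{Step 1 (energy identity).} For $f\in\Hyptr(\D)$ we want to test \eqref{eq:kolm_rad_weak} with $f$ itself. Formally this gives
\begin{equation*}
\langle S,f\rangle = \|\nabla_v f\|_{L^2(\D)}^2 + c\|f\|_{L^2(\D)}^2 + \tfrac12\int_\Gamma (n_x\cdot v) f_\Gamma^2\,d\sigma ,
\end{equation*}
and hence, using $f_\Gamma=G$ on $\Gamma_-$,
\begin{equation*}
\|\nabla_v f\|^2 + c\|f\|^2 + \tfrac12\|f_\Gamma\|^2_{L^2(\Gamma_+,|n_x\cdot v|)} \le \|S\|\,\|f\|_{L^2_xH^1_v} + \tfrac12\|G\|^2_{L^2(\Gamma_-,|n_x\cdot v|)}.
\end{equation*}
The identity $\langle v\cdot\nabla_x f,f\rangle=\frac12\int_\Gamma(n_x\cdot v)f_\Gamma^2$ has to be justified by approximation. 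We mollify $f$ in the tangential $x$ directions and in $v$, after flattening the boundary locally. Commutators with $v\cdot\nabla_x$ are controlled because $\partial\Omega$ is $C^{1,\alpha_V}$. Boundary terms pass to the limit because the trace with weight $|n_x\cdot v|^2$ is bounded on $\Hyp(\D)$ \cite{Niebel2026SharpTheory}, which gives weak convergence of traces. Since $f_\Gamma\in L^2(\Gamma,|n_x\cdot v|)$ is given, the quadratic boundary term converges. This justification is what Theorem~\ref{th:strong_trace_0} also rests on, and I expect it to be the main obstacle: without the natural trace estimate, $f_\Gamma^2|n_x\cdot v|$ cannot be controlled by interior norms directly.

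\emph{Step 2 (a priori bound).} When $c>0$, Step 1 together with Young's inequality controls $\|f\|_{L^2_xH^1_v}$ and $\|f_\Gamma\|_{L^2(\Gamma_+)}$. When $c=0$, Step 1 controls only $\nabla_v f$ and the outflow flux, up to a term $\|S\|\,\|f\|_{L^2}$. We add the Poincaré inequality \eqref{eq:poincare}, noting that $\|v\cdot\nabla_x f\|_{L^2_xH^{-1}_v}\le \|S\|+c\|f\|+\|\nabla_v f\|$ by \eqref{eq:explain_bounded}. This yields
\begin{equation*}
\|f\|_{L^2}\lesssim \|\nabla_v f\|+\|S\|+\|G\|,
\end{equation*}
and absorbing the $\|S\|\,\|f\|_{L^2}$ term closes the estimate. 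Adding the bound on $v\cdot\nabla_x f$ gives \eqref{eq:energy1} uniformly in $c\in[0,1]$. Uniqueness follows by applying the estimate to the difference of two solutions, which has $S=0$ and $G=0$.

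\emph{Step 3 (existence).} Fix first $c>0$. Let $H=L^2_xH^1_v(\D)$ and take test functions $\varphi\in C^1_c(\overline{\D})$ vanishing on $\Gamma_+$, with the norm
\begin{equation*}
\|\varphi\|_H^2+\tfrac12\|\varphi\|^2_{L^2(\Gamma_-,|n_x\cdot v|)}.
\end{equation*}
Define the bilinear form
\begin{equation*}
E(f,\varphi)=\int_\D\big[\nabla_v f\cdot\nabla_v\varphi - f\,v\cdot\nabla_x\varphi + (c+\lambda)f\varphi\big]\,dz .
\end{equation*}
It satisfies
\begin{equation*}
E(\varphi,\varphi)\ge \min(1,c)\|\varphi\|_H^2+\tfrac12\|\varphi\|^2_{L^2(\Gamma_-)},
\end{equation*}
and the right-hand side $\langle S,\varphi\rangle-\int_{\Gamma_-}(n_x\cdot v)G\varphi$ is bounded in that norm. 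Lions' theorem then gives $f\in H$ solving the equation for all such $\varphi$. From this we read off $f\in\Hyp(\D)$. We also get a trace $f_\Gamma$ with $f_\Gamma=G$ on $\Gamma_-$. The outflow trace on $\Gamma_+$ is recovered via Theorem~\ref{th:strong_trace_0}: apply \eqref{eq:trace_partial1} to a regularised family, or construct it by testing with $\varphi$ not vanishing on $\Gamma_+$ and using Step 1's bound. So $f$ solves Problem~\ref{prob:trace_G} and lies in $\Hyptr(\D)$.

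\emph{Step 4 ($c=0$).} Solve the problem for $c_n\downarrow0$. The uniform bound from Step 2 gives weak limits $f_n\rightharpoonup f$ in $\Hyp(\D)$ and $(f_n)_\Gamma\rightharpoonup f_\Gamma$ in $L^2(\Gamma,|n_x\cdot v|)$. The weak formulation \eqref{eq:kolm_rad_weak} is linear in $(f,f_\Gamma)$, so it passes to the limit, and the estimate \eqref{eq:energy1} is preserved by lower semicontinuity.
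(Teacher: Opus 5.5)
Your proposal has a genuine gap in Step~3, at exactly the point where this problem is hard: producing the outflow trace in $L^2(\Gamma_+,|n_x\cdot v|)$.

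Your Lions--Lax--Milgram argument uses only test functions vanishing on $\Gamma_+$. It therefore gives $f\in L^2_xH^1_v(\D)$ solving the weak formulation with inflow datum $G$, but says nothing about $f$ on $\Gamma_+$; in particular you do not yet know $f\in\Hyptr(\D)$. Both of your remedies fail:
\begin{itemize}
\item Theorem~\ref{th:strong_trace_0} is circular. In the paper, \eqref{eq:trace_partial1} is \emph{deduced} from the bound \eqref{eq:energy1} of the theorem you are proving, applied with $c=1$ and $S$ as in \eqref{eq:S_for_f}. It is also inapplicable as stated, since it assumes $f\in\Hyptr(\D)$, i.e.\ the very trace you want. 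Running it on a regularised family $f_\varepsilon$ would need uniform control of $f_\varepsilon|_{\Gamma_-}$ in $L^2(\Gamma_-,|n_x\cdot v|)$, which is exactly what the failure of the natural trace estimate withholds.
\item Step~1's bound has the same defect, because you justify Step~1 assuming $f_\Gamma\in L^2(\Gamma,|n_x\cdot v|)$ is already given.
\end{itemize}

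The paper avoids this by making the outflow trace part of the unknown. It takes $F=L^2_xH^1_v(\D)\times L^2(\Gamma_+,|n_x\cdot v|)$ and $H=C^1_c(\overline{\D})$ with the full boundary norm, and lets $E[(f,g),h]$ contain $\int_{\Gamma_+}(n_x\cdot v)_+g\,\Tr_xh\,d\sigma$. Then $E[h,h]=\|\nabla_vh\|^2+c\|h\|^2+\frac12\int_\Gamma|n_x\cdot v||\Tr_xh|^2\,d\sigma$, and Lions' theorem returns $g\in L^2(\Gamma_+,|n_x\cdot v|)$ directly, even for Lipschitz $\Omega$.

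Your route can also be repaired. Your $f$ is an interior solution, so Lemma~\ref{th:normalisation_kolm}\ref{item:renorm1} applies with $\tr_xf\in L^2(\Gamma,\omega_2)$. Combining \eqref{eq:green} with your restricted weak formulation gives $\tr_xf=G$ on $\Gamma_-$. Renormalising with globally Lipschitz truncations $\chi_k\uparrow t^2$ and $\phi=\phi_R$ bounds $\int_{\Gamma_+}|n_x\cdot v|\chi_k(\tr_xf)\phi_R\,d\sigma$ uniformly in $k$ and $R$. Monotone convergence then gives $\tr_xf\in L^2(\Gamma_+,|n_x\cdot v|)$, hence $f\in\Hyptr(\D)$.

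The same truncation is what Step~1 lacks. Weak convergence of traces does not pass a quadratic boundary term to the limit. Even strong convergence in $L^2(\Gamma,\omega_2)$ only suffices for Lipschitz $\chi$ and compactly supported $\phi$.

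Your mollification scheme in Step~1 is also off. The paper's inward-translated mollification commutes exactly with $v\cdot\nabla_x$, and it spends the $C^{1,\alpha_V}$ regularity only on the $\omega_2$-trace bound. Flattening a $C^{1,1/3}$ boundary instead yields a transport field with merely H\"older coefficients, outside the scope of standard commutator lemmas. Separately, drop the undefined $\lambda$ in your $E$.

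Steps~2 and~4 are sound. The Poincar\'e-based bound, uniform in small $c$, matches the paper. Your weak-compactness limit as $c_n\downarrow0$ is a simpler substitute for the paper's Cauchy argument in $c$.
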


Turning to Problem \ref{prob:refl}{}, consider the following function space.
\begin{definition}
    Let $\Omega$ be a bounded Lipschitz domain. The space
    $$\Hyprefl(\D)$$
    consists of functions $f\in\Hyp(\D)$ such that $v\cdot\nabla_xf$ satisfies
    \begin{equation}\label{eq:transB}
    \langle v \cdot \nabla_xf, \varphi  \rangle
    = - \int_\D f ( v\cdot \nabla_x \varphi) \ dz,
    \qquad \varphi\in \Crefl.
\end{equation}
It is endowed with the norm $\| \cdot \|_{\Hyp(\D)}$.
\end{definition}

Again, the assumptions $v\cdot\nabla_xf\in L^2_xH^{-1}_v(\D)$ and \eqref{eq:transB} mean that $v\cdot \nabla_xf$ extends to $\varphi\in L^2_xH^1_v(\D)$ in a way that agrees with integration by parts for $\varphi\in \Crefl$. 

Weak solutions to Problem \ref{prob:refl}{} belong to $\Hyprefl(\D)$. Indeed, if $f\in L^2_xH^1_v(\D)$ is weak solution, consider the linear operator $T_{f}:\Crefl\to \R$ defined by the right-hand side of \eqref{eq:transB}. Exactly as shown in \eqref{eq:explain_bounded}, from the weak formulation \eqref{eq:kolm_rad_weak_refl} and the fact that the spaces $C^1_c(\D)\subseteq\Crefl$ are dense in $L^2_xH^{1}_v(\D)$, we get that $T_{f}\in L^2_xH^{-1}_v(\D)$. Since $f$ is also a solution in the interior of $\D$ and therefore $v \cdot \nabla_xf \in L^2_xH^{-1}_v(\D)$, the fact that $T_{f}$ and $v \cdot \nabla_xf$ coincide on $C^1_c(\D)$ yields that they coincide on $L^2_xH^1_v(\D)$. Therefore, $v \cdot \nabla_xf$ satisfies \eqref{eq:transB}.

Well-posedness for Problem \ref{prob:refl}{} is our fourth main result. We state it together with Problem \ref{prob:torus}{} (although the latter was already solved in \cite{Albritton2024VariationalEquation}) to highlight the common structure. The proof can be found in Section \ref{sec:refl_torus}.

\begin{theorem}[Well-posedness with specular reflection and on $\torus$] \label{th:wellpos_refl_initial}
    Let $\Omega$ be a {$C^{1,\alpha_V}$} bounded domain (or $\torus$). Let $c>0$, and $S\in L^2_xH_v^{-1}(\D)$. Then there exists a unique solution $f$ to Problem \ref{prob:refl}{} (resp. Problem \ref{prob:torus}{}). It satisfies $f\in\Hyprefl(\D)$ (resp. $f\in\Hyp(\D)$) and
    \begin{equation}\label{eq:energy6.1}
    \|f\|_{\Hyp(\D)}\lesssim_c \|S\|_{L^2_xH_v^{-1}(\D)}.
    \end{equation}
    If $c=0$ and $\langle S\rangle_\D=0$, there exists a solution: it is unique within the normalised class $\langle f\rangle_\D=0$, and satisfies \eqref{eq:energy6.1}.
\end{theorem}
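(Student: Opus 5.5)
The plan is a Lax–Milgram/Lions-type variational approach, following Albritton–Armstrong–Mourrat–Novack, combined with an energy estimate and a Poincaré-type control of the mean in the case $c=0$. Existence and uniqueness are handled separately, with the main work in constructing the solution.

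\textbf{Energy identity.} The first step is an energy identity for $f\in\Hyprefl(\D)$ (resp. $\Hyp(\D)$ on $\torus$):
\begin{equation*}
\langle v\cdot\nabla_x f, f\rangle = 0 .
\end{equation*}
On $\torus$ this follows by mollifying in $x$ only. Mollification commutes with $v\cdot\nabla_x$ and preserves $L^2_xH^1_v$, and for smooth-in-$x$ functions the transport term is antisymmetric.

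For specular reflection, I would approximate $f$ by functions in $\Crefl$ in the $\Hyp$ norm. One way is to flatten the boundary with a $C^{1,\alpha_V}$ chart and extend $f$ evenly across $\partial\Omega$ in a way compatible with the reflection $v\mapsto v-2(n_x\cdot v)n_x$, so that the extended function is an $\Hyp$ function in a neighbourhood. Then mollify and restrict; this is where the $C^{1,\alpha_V}$ regularity enters, controlling the commutator of the reflection with transport. I expect this density/approximation step to be the main obstacle. As a fallback, I would use the sub-optimal trace with weight $|n_x\cdot v|^2$, whose boundedness is guaranteed by the $C^{1,\alpha_V}$ assumption, to justify the boundary integration by parts and check that the fluxes on $\Gamma_+$ and $\Gamma_-$ cancel under reflection.

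\textbf{Uniqueness and estimate.} Given the identity, testing the equation with $f$ yields
\begin{equation*}
\|\nabla_v f\|^2 + c\|f\|^2 \le \|S\|\,\|f\|_{L^2_xH^1_v}.
\end{equation*}
The transport term is then controlled through the equation as in \eqref{eq:explain_bounded}. This gives \eqref{eq:energy6.1} for $c>0$, and uniqueness follows by linearity.

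For $c=0$, I additionally need a Poincaré inequality
\begin{equation*}
\|f-\langle f\rangle_\D\|_{L^2}\lesssim \|\nabla_v f\|+\|v\cdot\nabla_x f\|_{L^2_xH^{-1}_v}
\end{equation*}
on $\Hyprefl(\D)$. On $\torus$ this is known from Albritton et al. For the reflection case I would prove it by an adaptation of Theorem~\ref{th:poincare1}: the velocity average $\langle f\rangle_V$ is controlled in $H^{-1}_x$-type norms by averaging lemmas, and then the test-function construction ($\nabla_x$ of a Neumann problem for $\langle f\rangle_V$) is compatible with reflection because its normal component vanishes on $\partial\Omega$. The solvability condition $\langle S\rangle_\D=0$ is precisely what one obtains by testing the equation with $\varphi\equiv1\in\Crefl$.

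\textbf{Existence.} Following Albritton et al., I would minimize over $\Hyprefl(\D)$, with the mean-zero constraint when $c=0$, the convex functional
\begin{equation*}
J[f]=\inf_{j}\int_\D \tfrac12|\nabla_v f-j|^2 + \ldots
\end{equation*}
subject to $\nabla_v\cdot j = c f + v\cdot\nabla_x f - S$. Coercivity of $J$ comes from the energy identity together with Poincaré, and weak lower semicontinuity from convexity; at the minimum one has $J=0$. Since $\Hyprefl$ is weakly closed (the constraint \eqref{eq:transB} passes to weak limits), the minimizer solves the weak formulation \eqref{eq:kolm_rad_weak_refl}.

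Alternatively, existence can be obtained via a vanishing-viscosity/Galerkin scheme: add $\epsilon\Delta_x$ with a Neumann-type condition compatible with reflection, apply the uniform estimates above, and pass to the limit.
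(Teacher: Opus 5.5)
Your overall architecture matches the paper's: an energy estimate, the transport bound read off from the equation, and a mean-zero Poincaré inequality for $c=0$. But the step everything rests on, the identity $\langle v\cdot\nabla_x f,f\rangle=0$ for $f\in\Hyprefl(\D)$, is not established by either route you give.

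\textbf{The energy identity.} Density of $\Crefl$ in $\Hyprefl(\D)$ for the $\Hyp$ norm is not available. It is the same kind of approximation statement as the Armstrong--Mourrat density claim whose proof was found incomplete. Moreover, with only $C^{1,1/3}$ charts when $V=\Sd$, the flattened transport field has merely H\"older coefficients, so the mollification commutators are not controlled.

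Your fallback also fails if taken literally. The $\omega_2$-trace only gives $f_\varepsilon\to\tr_x f$ in $L^2(\Gamma,\omega_2)$. Since $\omega_2=|n_x\cdot v|^2\ll|n_x\cdot v|$ near $\Gamma_0$, this does not let you pass to the limit in the quadratic flux $\int_\Gamma(n_x\cdot v)f_\varepsilon^2\,d\sigma$.

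The missing device is the renormalisation of Lemma \ref{th:normalisation_kolm} with a Lipschitz truncation $\chi_k$ of $t^2$:
\begin{itemize}
\item The boundary term $\int_\Gamma(n_x\cdot v)\chi_k(f_\varepsilon)\phi\,d\sigma$ is then Lipschitz in the trace. Cauchy--Schwarz with the weights $\omega_2$ and $|n_x\cdot v|^2/\omega_2$, as in \eqref{eq:renorm_conclusion}, passes it to $\int_\Gamma(n_x\cdot v)\chi_k(\tr_x f)\phi\,d\sigma$.
\item This term vanishes for every $k$ once you know $\tr_x f=\Bo_{\Ro}\tr_x f$ on $\Gamma_-$ and take $\phi=\phi_R(v)$ radial. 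The symmetry must first be derived from \eqref{eq:transB} and the Green formula \eqref{eq:green}, as in \eqref{eq:hyprefl2}.
\item Only afterwards do you send $k\to\infty$ and $R\to\infty$, in the interior terms alone.
\end{itemize}
Every $f\in\Hyp(\D)$ is an interior solution for $S$ as in \eqref{eq:S_for_f}, so this gives your identity on all of $\Hyprefl(\D)$. With it, your uniqueness and energy paragraph is correct, including uniqueness for $c=0$ among mean-zero solutions.

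\textbf{Existence.} The functional $J$ is left as ``$+\ldots$''. The claim that its minimum value is $0$ is the substantive step of the Albritton--Armstrong--Mourrat--Novack method. Convexity, lower semicontinuity and coercivity only give a minimiser, not that its value vanishes.

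The paper sidesteps both this and the rough energy identity. It applies Lions' theorem (Theorem \ref{th:lions}) for $c>0$ with $F=L^2_xH^1_v(\D)$ and $H=\Crefl$ (resp.\ $C^1_c(\D)$), normed by $L^2_xH^1_v$. Coercivity is then needed only on smooth reflection-symmetric $h$, where $\int_\D h\,(v\cdot\nabla_x h)\,dz=\frac12\int_\Gamma(n_x\cdot v)h^2\,d\sigma=0$ after the change of variables $v\mapsto R_xv$. The renormalised identity is used only for the estimate. The case $c=0$ is reached as the limit $c\to0$, using that $\langle S\rangle_\D=0$ forces $\langle f_c\rangle_\D=0$.

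\textbf{Poincar\'e inequality.} You do not need a reflection-adapted Poincar\'e inequality, nor averaging lemmas. Theorem \ref{th:poincare2} already holds for every $f\in\Hyp(\D)$ on a bounded Lipschitz domain. Its proof tests only against $\varphi(x)\xi_i(v)$ with $\varphi\in C^1_c(\Omega)$, and Bogovskii's operator produces $F\in H^1_0(\Omega)$, so no boundary condition on $f$ enters.
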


We recover the behaviour of solutions to Problems \ref{prob:refl}{} and \ref{prob:torus}{} at the boundary when $f\in L^\infty(\D)$. The proof is again in Section \ref{sec:refl_torus}.

\begin{proposition}\label{th:extend_to_boundary_BC}
    Let $\Omega$ be a {$C^{1,\alpha_V}$} bounded domain, and let $f$ be the solution to Problem \ref{prob:refl}{} with $c>0$. If $f\in L^\infty(\D)$, then there exists $f_\Gamma\in L^\infty(\Gamma)$ such that $f$ is the solution to Problem \ref{prob:trace_Ro} on $\D$ with trace $f_\Gamma$.

    The same holds for $\Omega =\torus$ and Problem \ref{prob:torus}{} with $c>0$, with respect to the fundamental domain ${\D}\coloneqq (0,1)^d \times {V}$: if $f\in L^\infty(\D)$, there exists $f_{\partial{\D}} \in L^\infty(\partial{\D})$ such that $f|_{{\D}}$ is the solution to Problem \ref{prob:trace_To} on ${\D}$ with trace $f_{\partial{\D}}$.
\end{proposition}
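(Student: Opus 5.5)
We need to prove Proposition \ref{th:extend_to_boundary_BC}: a bounded solution of the reflection problem, without trace, admits an $L^\infty$ trace that makes it a solution with trace. We plan to construct $f_\Gamma$ as a weak-$*$ limit, verify the weak formulation with a general test function, and only then deduce the reflection condition.

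\textbf{Step 1: a transport equation up to a remainder.} The weak formulation \eqref{eq:kolm_rad_weak_refl}, tested with $\varphi\in C^1_c(\D)$, shows that $f\in\Hyp(\D)$ and that
\[
v\cdot\nabla_x f = S + \A_v f - cf =: F\in L^2_xH^{-1}_v(\D).
\]

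\textbf{Step 2: regularise near the boundary.} For $\varphi\in C^1_c(\overline{\D})$, introduce a cutoff $\chi_\varepsilon(x)=\chi(\dist(x,\partial\Omega)/\varepsilon)$ that equals $1$ away from the boundary and vanishes in an $\varepsilon$-layer. Testing with $\chi_\varepsilon\varphi\in C^1_c(\D)$ gives
\[
\langle F,\chi_\varepsilon\varphi\rangle=-\int_\D f\chi_\varepsilon\, v\cdot\nabla_x\varphi\,dz-\int_\D f\varphi\, v\cdot\nabla_x\chi_\varepsilon\,dz .
\]
Near $\partial\Omega$ we have $\nabla_x\chi_\varepsilon\approx -\varepsilon^{-1}\chi'\,n_x$, using $C^1$ regularity of the distance function. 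The last term is therefore an average over the layer of $f\,\varphi\,(n_x\cdot v)$. Define the boundary functionals
\[
\Lambda_\varepsilon(\psi)=\frac1\varepsilon\int_\D f\,\psi\,(n_{\pi(x)}\cdot v)\,|\chi'|\,dz ,
\]
where $\pi$ is the projection onto $\partial\Omega$. Since $f\in L^\infty$, these are represented by densities $(n_x\cdot v)g_\varepsilon(x,v)$ on $\Gamma$ with $\|g_\varepsilon\|_{L^\infty(\Gamma)}\lesssim\|f\|_{L^\infty}$. The bound comes from the coarea formula: the layer $\{\dist<\varepsilon\}$ is, up to a Jacobian close to $1$, a product of $\partial\Omega$ with $(0,\varepsilon)$.

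\textbf{Step 3: pass to the limit.} By Banach--Alaoglu, $g_\varepsilon\rightharpoonup^* f_\Gamma$ in $L^\infty(\Gamma)$ along a subsequence. On the other terms:
\begin{itemize}[label={}]
\item $\chi_\varepsilon\varphi\to\varphi$ in $L^2_xH^1_v$, since $\chi_\varepsilon$ is independent of $v$, so $\langle F,\chi_\varepsilon\varphi\rangle\to\langle F,\varphi\rangle$;
\item the first integral converges by dominated convergence;
\item the replacement of $\varphi(x)$ by $\varphi(\pi(x))$ in the layer costs $O(\varepsilon)\,\|f\|_{L^\infty}\,\|\nabla\varphi\|_\infty$.
\end{itemize}
The limit identity is exactly \eqref{eq:transA} with trace $f_\Gamma\in L^\infty(\Gamma)\subset L^2(\Gamma,|n_x\cdot v|)$, where we use that $\Gamma$ has finite $|n_x\cdot v|\,d\sigma$-measure in both velocity models. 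Together with the equation for $F$, this is the formulation \eqref{eq:kolm_rad_weak}, so $f$ is a weak solution up to the boundary.

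\textbf{Step 4: the reflection condition.} Subtracting \eqref{eq:kolm_rad_weak_refl} from \eqref{eq:kolm_rad_weak} for $\varphi\in\Crefl$ gives
\[
\int_\Gamma (n_x\cdot v)\, f_\Gamma\,\varphi\,d\sigma=0 \qquad\text{for all }\varphi\in\Crefl .
\]
The map $R_x v=v-2(n_x\cdot v)n_x$ is a measure-preserving involution of $V$ for both measures; this is where rotation invariance of $d\gamma$ and $d\theta$ enters. It swaps $\Gamma_+$ and $\Gamma_-$ and flips the sign of $n_x\cdot v$. Splitting the integral into $\Gamma_+$ and $\Gamma_-$ and changing variables on $\Gamma_-$ yields
\[
\int_{\Gamma_+}(n_x\cdot v)\bigl(f_\Gamma(x,v)-f_\Gamma(x,R_xv)\bigr)\varphi\,d\sigma=0 .
\]
Restrictions of $\Crefl$ to $\Gamma_+$ contain $C^1_c(\Gamma_+)$: take $\psi\in C^1_c(\Gamma_+)$, extend by $\psi\circ R$ on $\Gamma_-$, then extend inside via a collar. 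Density then gives $f_\Gamma=\Bo_\Ro f_\Gamma$ on $\Gamma_-$.

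\textbf{Torus case.} The same argument applies on the faces of the cube, where $\dist$ is smooth away from the edges, a null set. The periodic condition follows from testing with periodic functions in the same way.

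\textbf{Main obstacle.} We expect the main difficulty to be Step 2 under only $C^{1,\alpha_V}$ regularity. The projection $\pi$ and the normal field extended into the collar must be handled carefully, including near grazing directions. If the distance function is too rough, we would instead use a regularised distance, or flatten $\partial\Omega$ locally with a partition of unity.
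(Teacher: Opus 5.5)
Your construction of $f_\Gamma$ differs from the paper's. The paper takes the trace from Lemma~\ref{th:normalisation_kolm}\ref{item:renorm1_infty}. There it mollifies $f$ with kernels shifted inward by $\lambda\varepsilon e_d$, which requires controlling the commutator remainder in \eqref{eq:weak_ep}, and takes a weak-$*$ limit of $f_\varepsilon|_\Gamma$. It then identifies $f_\Gamma=\tr_xf$ and quotes \eqref{eq:hyprefl2}, and the $C^{1,\alpha_V}$ trace theorems enter only through this identification. You instead test the interior formulation once with a boundary-layer cutoff and take weak-$*$ limits of layer averages of $f$. This is the dual, more elementary form of the same idea, and you check the reflection condition directly on $f_\Gamma$, bypassing $\tr_x$.

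As written, however, Step~2 is valid only when $\partial\Omega$ has positive reach, i.e.\ in the $C^{1,1}$ case $V=\Rg$. For $V=\Sd$ with $\alpha_V=1/3$, two things go wrong:
\begin{itemize}
\item $\dist(\cdot,\partial\Omega)$ need not be $C^1$ near $\partial\Omega$.
\item At a ridge of $\partial\Omega$ pointing into $\Omega$, such as $\{x_d>-|x'|^{4/3}\}$, the inward normals fan out at an unbounded rate. The push-forward of the layer measure under $\pi$ then has unbounded density with respect to $ds$, and $\|g_\varepsilon\|_{L^\infty(\Gamma)}\lesssim\|f\|_{L^\infty(\D)}$ fails.
\end{itemize}
Your fallback is the right fix and should be the main argument. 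In a chart $\Omega\cap U_i=\{x_d>h_i(x')\}$ with partition of unity $\zeta_i$, test with $\chi\big((x_d-h_i(x'))/\varepsilon\big)\zeta_i\varphi$. Then $v\cdot\nabla_x\chi_\varepsilon=-\varepsilon^{-1}\chi'\sqrt{1+|\nabla h_i|^2}\,(n\cdot v)$ exactly, with $n$ evaluated at $(x',h_i(x'))$. Fubini with $ds=\sqrt{1+|\nabla h_i|^2}\,dx'$ then exhibits $g_\varepsilon$ as a vertical average of $f$, so $|g_\varepsilon|\le\|f\|_{L^\infty(\D)}$ with no Jacobian at all. With this, Steps~1--3 use no $C^{1,\alpha_V}$ regularity.

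The genuine gap is in Step~4. It is the same comparison by which the paper derives \eqref{eq:hyprefl2}, where the needed density is hidden in the word ``Hence''. Your explicit justification of that density does not hold. The function $(x,v)\mapsto\psi(x,R_xv)$ is only as regular in $x$ as $x\mapsto n_x$, i.e.\ $C^{0,\alpha_V}$. So it is not the restriction to $\Gamma_-$ of any element of $C^1_c(\overline{\D})$, and no cleverer extension helps.

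To see this, take $d=2$, let $\tau$ be the tangent angle of $\partial\Omega$, and let $c_k(x,r)$ be the Fourier coefficients of $\theta\mapsto\varphi(x,re^{i\theta})$. For $\varphi\in\Crefl$, reflection symmetry forces $c_ke^{ik\tau}\in\R$, i.e.\ $e^{2ik\tau}=\overline{c_k}/c_k$ wherever $c_k\neq0$. Since $c_k$ is $C^1$ along $\partial\Omega$, $\tau$ must be $C^1$ near any point where some $c_k$ with $k\neq0$ is nonzero. So on a boundary arc where $\tau$ is nowhere locally $C^1$, every $\varphi\in\Crefl$ is radial in $v$ on $\Gamma$; $C^{1,\alpha_V}$ regularity does not exclude such arcs. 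On such an arc your identity only says that $\int(n_x\cdot v)f_\Gamma$ vanishes over each sphere $\{|v|=r\}$, not that $f_\Gamma=\Bo_\Ro f_\Gamma$.

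To close Step~4 you need one of the following:
\begin{itemize}
\item $n_x\in C^1$, i.e.\ a $C^2$ boundary, where your collar extension works verbatim;
\item a larger class of reflection-symmetric test functions, of lower $x$-regularity, admitted in Problem~\ref{prob:refl}.
\end{itemize}
The torus case is unaffected, since $\psi(x',v)\zeta(x_i)$ extends smoothly and periodically.
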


As a consequence of Theorem \ref{th:wellpos_refl_initial} and Proposition \ref{th:extend_to_boundary_BC}, we obtain existence and uniqueness of the solution to Problems \ref{prob:trace_Ro} and \ref{prob:trace_To}, with $c>0$ and $S\in L^\infty(\D)$ -- see Proposition \ref{prop:refl_torus_trace}. We also give a collection of weak maximum principles for the three problems in Proposition \ref{th:max_princ}.

\subsection{Review of the literature}
\label{sec:literature}

The existence of solutions for the Kolmogorov-Fokker-Planck equation on bounded domains was addressed relatively recently. Initially, Carrillo \cite{Carrillo1998GlobalSystem} established an existence theory for the kinetic Vlasov-Poisson-Fokker-Planck system on bounded domains with Gaussian velocities, under absorbing and reflection-type boundary conditions, with the aid of a representation theorem by J.-L. Lions \cite{Lions1957SurCylindriques} known as Lions-Lax-Milgram (see Theorem \ref{th:lions}). Regarding well-posedness, Carrillo relied on the trace theorem \cite[Lemma 2.3]{Carrillo1998GlobalSystem}, although later Albritton, Armstrong, Mourrat, and Novack \cite{Albritton2024VariationalEquation} pointed out the incompleteness of its proof. They left as open questions the existence of a trace operator from $\Hyp(\D)$ to $L^2(\Gamma, |v \cdot n_x|)$, that is
\begin{equation}\label{eq:trace_problem}
    \|f\|_{L^2(\Gamma, |v \cdot n_x|)} \leq C_\Omega \|f\|_{\Hyp(\D)},
    \qquad f\in C_c^\infty(\overline{\D})
\end{equation}
for some constant $C_\Omega>0$ \cite[Question 1.8]{Albritton2024VariationalEquation}, and the related solvability of the kinetic Fokker-Planck equation with boundary.

In \cite{Albritton2024VariationalEquation}, the authors addressed the time-dependent and stationary Kramers-Fokker-Planck equations with bounded position and unbounded velocity with Gaussian weight. They introduced a hypoelliptic Poincaré inequality for functions in $\Hyp(\D)$ for either $\Omega$ a bounded Lipschitz domain or $\Omega=\torus$ \cite[Theorem 1.3]{Albritton2024VariationalEquation}, and developed a variational approach to obtain the weak solution on $\torus$ in two ways: on one hand, by the Lions-Lax-Milgram theorem combined with their hypoelliptic Poincaré inequalities (see Section \ref{sec:comm_poincare}); on the other hand, as the minimiser of a uniformly convex functional. The functional approach was later adapted to the Kolmogorov equation on bounded domains with inflow boundary conditions and rough coefficients in \cite{Litsgard2021TheCoefficients}, but the trace problem was not addressed therein. An alternative approach towards the existence of weak solutions on domains without boundary can be found in \cite{Avelin2024AExpansions}, where a Galerkin-type approximation for solutions on $\torus$ or $\R^d$ is developed. 

Then, Zhu \cite{Zhu2025RegularityDomains} introduced the notion of solution-trace pairs (see Section \ref{sec:trace}) for the equation in the bounded-velocity model. He established existence (through a vanishing viscosity method), uniqueness and energy estimates (by renormalisation techniques) of the solution to the time-dependent kinetic Fokker-Planck equation, under inflow boundary condition and specular reflection condition with a bounded source. Finally, Avelin and Hou \cite{Avelin2026WeakDomains} adapted the work of Zhu to show existence and uniqueness of the solution to the stationary kinetic Fokker-Planck equation with bounded position and velocity. They also compared the Lions-Lax-Milgram approach and the vanishing viscosity method -- see Remark \ref{rm:comm_avelin}. Still, in \cite{Avelin2026WeakDomains}, energy estimates for the stationary equation were not addressed.

Our work gives a unified approach to the inflow boundary problem, the specular reflection problem and the periodic problem on $\torus$ for the stationary Kolmogorov equation with bounded position and spherical or Gaussian velocities. In particular, we build on the strategy of \cite{Albritton2024VariationalEquation} to get the solution to the equation without damping ($c=0$) as the limit of the solutions of the penalised equations with damping ($c>0$). Indeed, the Lions-Lax-Milgram theorem and the vanishing viscosity method (which are standard techniques to prove the existence of solutions) apply for $c>0$ (see Lemma \ref{th:existence1}). Moreover, while energy estimates for the time-dependent Kolmogorov-Fokker-Planck equation can be obtained even in the absence of damping ($c=0$) using the Gr\"onwall inequality (see \cite[Lemma 2.7]{Zhu2025RegularityDomains}), this is not the case for the stationary equation. The key point in our work is the estimate \eqref{eq:zz_mean_poincare} on the average of functions in $\Hyptr(\D)$: together with the Poincaré inequality from \cite{Albritton2024VariationalEquation} (which is enough to solve the problem on $\torus$ only), it yields the Poincaré-type estimate \eqref{eq:poincare}. The latter and the renormalisation formula (Lemma \ref{th:normalisation_kolm}) are enough to establish energy estimates which are stable for $c\geq 0$, which allow us to perform the limit $c\to 0$.

Since the first dissemination of the present paper, the related work \cite{Niebel2026SharpTheory}, carried out jointly with Lukas Niebel, has established a full kinetic trace theory, with sharp positive and negative results. In particular, for the Gaussian velocity model $V=\Rg$, the trace estimate \eqref{eq:trace_problem}, and therefore the existence of a trace operator with natural weight, fails on any $C^{1,1}$ domain $\Omega$ in dimension $d \geq 2$. For the spherical velocity model $V=\Sd$, the trace estimate \eqref{eq:trace_problem} in general does not hold in the class of $C^{1,\alpha}$ domains for any $\alpha < \frac{1}{2}$, whereas it holds for $C^{1,1/2}$ domains. In this work, we give well-posedness results despite the (possible) failure of the natural trace estimate.

\subsubsection{The kinetic trace problem}
\label{sec:trace}

The difficulty of the trace estimate \eqref{eq:trace_problem} is related to the possibly wild behaviour of the trace around the grazing set $\Gamma_0$, due to the nature of the transport operator $v \cdot \nabla_x$. 
Let us remark that the power $1$ for the weight $|v \cdot n_x|$ is the natural one: due to the transport operator $v \cdot \nabla_x$, it naturally appears in the weak formulation \eqref{eq:kolm_rad_weak}, where the boundary term is interpreted as a scalar product.

The trace problem was solved positively for $d=1$ by Baouendi and Grisvard in \cite[Theorem 1]{Baouendi1968SurType}. Their argument relies on the auxiliary function ${f}(x,v) \coloneqq f(x,|v|)$ and estimates the $L^2_xH^{-1}_v$-norm of $|v|\partial_x{f}$ with the $L^2_xH^{-1}_v$-norm of $v\partial_xf$: together with integration by parts, this is enough to show \eqref{eq:trace_problem}. The same strategy in higher dimensions fails, as well described in \cite[Appendix A]{Armstrong2019VariationalEquation}. The key point is that an estimate on $v\cdot \nabla_xf$ in dimension $d=1$ is in fact an estimate on the full gradient of $\nabla_xf$, which degenerates on a hyperplane, while in higher dimensions it is only an estimate of one particular projection of $\nabla_xf$ at any given point.

For the Gaussian velocity model $V=\Rg$, in \cite{Armstrong2019VariationalEquation} Armstrong and Mourrat proved that the trace operator is bounded away from the grazing set on $C^{1,1}$ domains. 

\begin{lemma}[Armstrong, Mourrat {\cite[Lemma 4.3]{Armstrong2019VariationalEquation}}] \label{th:trace_away}
    Let $\Omega$ be a $C^{1,1}$ bounded domain, and $\D \coloneqq \Omega \times \Rg$. Then, for any compact set $K \subseteq \Gamma \setminus \Gamma_0$, the trace operator from $C^\infty(\D)$ to $L^2(K,|v \cdot n_x|)$ extends to a continuous linear operator on $\Hyp(\D)$. This yields a pointwise definition of trace $f|_\Gamma \in L^2_{\loc}(\Gamma \setminus \Gamma_0, |v \cdot n_x|)$ for $f\in \Hyp(\D)$.
\end{lemma}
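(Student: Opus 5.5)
The plan is a localised energy identity (divergence theorem) with a cutoff adapted to $K$, followed by density. Since $K\subseteq\Gamma\setminus\Gamma_0$ is compact, it is contained in $\{|v|\le R\}$ for some $R$, and there is $\delta>0$ with $|n_x\cdot v|\ge 2\delta$ on $K$. Splitting $K=K_+\cup K_-$ according to the sign of $n_x\cdot v$, it suffices to treat $K_+\subseteq\Gamma_+$; the case $K_-$ is symmetric. So the goal is
\[
\int_{K_+}(n_x\cdot v)\,f^2\,d\sigma\lesssim_K \|f\|_{\Hyp(\D)}^2,\qquad f\in C^\infty(\overline{\D}),
\]
and then to extend by density.

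\emph{Step 1: construct a cutoff.} Because $\partial\Omega$ is $C^{1,1}$, the outer normal $n_x$ extends to a Lipschitz vector field $N(x)$ on a tubular neighbourhood $U$ of $\partial\Omega$ (for instance $N=\nabla d$ with $d$ the signed distance). Fix smooth $\chi(x)$ supported in $U$ with $\chi=1$ near $\partial\Omega$. Fix smooth $\eta(v)$ with $0\le\eta\le1$, $\eta=1$ on $\{|v|\le R\}$ and $\eta=0$ for $|v|\ge 2R$. Fix smooth $\theta:\R\to[0,1]$ with $\theta(s)=1$ for $s\ge 2\delta$ and $\theta(s)=0$ for $s\le\delta$. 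Then set
\[
\psi(x,v)=\chi(x)\,\eta(v)\,\theta(N(x)\cdot v).
\]
This $\psi$ is bounded, has bounded $\nabla_v\psi$, and $v\cdot\nabla_x\psi$ is bounded because $N$ is Lipschitz and $|v|\le 2R$ on the support. This is precisely where $C^{1,1}$ enters. Moreover $\psi\ge0$ on $\Gamma$, $\psi=1$ on $K_+$, and $\psi|_\Gamma$ is supported in $\{n_x\cdot v\ge\delta\}\subseteq\Gamma_+$.

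\emph{Step 2: energy identity.} For smooth $f$, the divergence theorem in $x$ gives
\[
\int_\Gamma (n_x\cdot v)\,f^2\psi\,d\sigma=\int_\D v\cdot\nabla_x(f^2\psi)\,dz=2\langle v\cdot\nabla_x f,\,f\psi\rangle+\int_\D f^2\,(v\cdot\nabla_x\psi)\,dz.
\]
The Gaussian weight is harmless here because $\psi$ has bounded velocity support. The left-hand side dominates $\int_{K_+}(n_x\cdot v)f^2\,d\sigma$, since its integrand is nonnegative. On the right:
\[
|\langle v\cdot\nabla_x f,f\psi\rangle|\le\|v\cdot\nabla_xf\|_{L^2_xH^{-1}_v}\,\|f\psi\|_{L^2_xH^1_v}\lesssim_\psi \|v\cdot\nabla_xf\|_{L^2_xH^{-1}_v}\,\|f\|_{L^2_xH^1_v},
\]
and the last term is at most $\|v\cdot\nabla_x\psi\|_\infty\|f\|_{L^2}^2$. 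Together these give the claimed bound on smooth functions. Linearity and the completeness of $L^2(K,|v\cdot n_x|)$ then yield the continuous extension to $\Hyp(\D)$, and hence the pointwise a.e.\ trace on $\Gamma\setminus\Gamma_0$ by exhausting it with compact sets.

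\emph{Step 3: density (main obstacle).} The extension requires $C^\infty(\overline{\D})$ to be dense in $\Hyp(\D)$, and this is where I expect the real work. The plan is a partition of unity in $x$ together with a Friedrichs-type regularisation. Near each boundary point, first translate $f$ slightly inward in a fixed transversal direction; the Lipschitz graph structure makes this direction uniform in a chart. Then mollify in $x$ and $v$. One has to check that the commutator between mollification and the transport term $v\cdot\nabla_x$ vanishes in $L^2_xH^{-1}_v$, by Friedrichs' lemma: the coefficient $v$ is smooth and linear, and the Gaussian weight is handled by truncating $|v|$ first with a cutoff whose error is controlled by $\|f\|_{L^2_xH^1_v}$. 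If this density argument proves delicate, a fallback is to run Step 2 directly for $f\in\Hyp(\D)$ using only the local smoothing produced by $\psi$, since $\psi$ localises to a region where the boundary is transversal to the flow.
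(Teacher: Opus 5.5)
Your argument is correct. The paper gives no proof of this lemma (it only quotes Armstrong--Mourrat), so the nearest in-paper comparison points are two pieces of the paper. The first is the sketch after Proposition~\ref{th:trace_2}: approximate by smooth functions, then integrate $v\cdot\nabla_x f_n$ against $(v\cdot n(x))f_n$. The second is the inward-translated mollification in Step~2 of Lemma~\ref{th:normalisation_kolm}. Your proof combines exactly these two ingredients, with the global multiplier $v\cdot n(x)$ replaced by the one-signed localised weight $\psi=\chi\,\eta\,\theta(N\cdot v)$.

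The trade-off between the two multipliers is worth noting. The multiplier $(v\cdot n(x))f$ needs no sign separation, because $(n_x\cdot v)^2\ge0$. It therefore gives a global bound, but only with the degenerate weight $\omega_2$. Your $\psi$ recovers the natural weight $|n_x\cdot v|$, but with a constant that degenerates as $\delta\to0$, through $\theta'$ in $\nabla_v\psi$ and in $v\cdot\nabla_x\psi$. So it cannot be pushed up to $\Gamma_0$, consistent with the failure of \eqref{eq:trace_problem} on $C^{1,1}$ domains recalled in the paper.

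Three minor points.

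First, $C^{1,1}$ is not actually needed for your cutoff. On a $C^1$ boundary $n_x$ is continuous, so $\Gamma_+$ is relatively open in $\Gamma$, i.e.\ $\Gamma_+=W\cap\Gamma$ for some open $W\subseteq\R^{2d}$. Any $\psi\in C^\infty_c(W)$ with $0\le\psi\le1$ and $\psi=1$ on $K_+$ then works equally well, with no Lipschitz extension of the normal required.

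Second, in the velocity truncation the transport error is not controlled by $\|f\|_{L^2_xH^1_v(\D)}$. One has $v\cdot\nabla_x(\eta_Rf)=\eta_R\,(v\cdot\nabla_xf)$ exactly. Write $\langle v\cdot\nabla_xf,\varphi\rangle=\int_\D(g_0\varphi+g_1\cdot\nabla_v\varphi)\,dz$ with $g_0,g_1\in L^2(\D)$ (Riesz representation in $L^2_xH^1_v(\D)$). This gives $\|(1-\eta_R)\,v\cdot\nabla_xf\|_{L^2_xH^{-1}_v(\D)}\lesssim\|(1-\eta_R)g_0\|_{L^2(\D)}+\|(1-\eta_R)g_1\|_{L^2(\D)}+\|g_1\nabla\eta_R\|_{L^2(\D)}\to0$.

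Third, the ``fallback'' of running Step~2 directly on $f\in\Hyp(\D)$ is not viable: without density there are no boundary values to integrate. It is also not needed. Translation and mollification in $x$ commute exactly with $v\cdot\nabla_x$, and the commutator of a joint $(x,v)$-mollification on the velocity-truncated function is handled by Friedrichs' lemma, as you say.
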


Armstrong and Mourrat also claimed the one-sided trace estimate of Theorem \ref{th:strong_trace_0} in \cite[Lemma 4.5]{Armstrong2019VariationalEquation}, although Brunken and Smetana \cite[Supplement material, Section SM2]{Brunken2022StableEquation} pointed out the incompleteness of the proof.

Then, Silvestre obtained a trace theorem in $L^2(\Gamma, \omega_2)$ -- that is, with trace weight $\omega_2(x,v) = \min\{|v \cdot n_x|,|v \cdot n_x|^2\}$ -- in \cite[Proposition 4.3]{Silvestre2022HolderBoundary}, again for $C^{1,1}$ domains. 

\begin{proposition}[Silvestre {\cite[Proposition 4.3]{Silvestre2022HolderBoundary}}]
\label{th:trace_2}
    Let $\Omega$ be a bounded $C^{1,1}$ domain and $V =\Rg$. Then boundary restriction extends to a bounded trace operator
    \begin{equation*}
        \tr_x: \Hyp(\D) \to L^2(\Gamma,\omega_2).
    \end{equation*}
\end{proposition}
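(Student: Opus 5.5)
The plan is to prove the inequality $\|f\|_{L^2(\Gamma,\omega_2)}\lesssim\|f\|_{\Hyp(\D)}$ first for smooth $f$, say $f\in C^\infty_c(\overline{\D})$. It then extends by density of such functions in $\Hyp(\D)$, which is a standard mollification/Friedrichs-commutator argument that I take for granted. The smooth-case estimate will come from a divergence-theorem identity with a well-chosen multiplier $\psi(x,v)$, built so that the boundary integrand becomes exactly $\omega_2 f^2\geq 0$.

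\textbf{Construction of the multiplier.} Since $\partial\Omega$ is $C^{1,1}$, the signed distance function is $C^{1,1}$ in a tubular neighbourhood of $\partial\Omega$. Its gradient therefore gives a Lipschitz extension $N(x)$ of the outward normal $n_x$, with $\nabla_x N\in L^\infty$. Fix a cutoff $\chi\in C^\infty_c(\R^d)$ equal to $1$ near $\partial\Omega$ and supported in that neighbourhood. Let $\eta(s)=s$ for $|s|\le1$ and $\eta(s)=\operatorname{sign}(s)$ for $|s|\ge1$, so that $\eta$ is bounded and Lipschitz and $s\,\eta(s)=\min\{|s|,s^2\}$. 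Set
\[
\psi(x,v)\coloneqq\chi(x)\,\eta\bigl(N(x)\cdot v\bigr).
\]
On $\Gamma$ this gives $(n_x\cdot v)\psi=\omega_2$.

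\textbf{Main identity.} Integration by parts in $x$ gives
\[
\int_\Gamma \omega_2 f^2\,d\sigma=\int_\D v\cdot\nabla_x(f^2\psi)\,dz=2\langle v\cdot\nabla_x f, f\psi\rangle+\int_\D f^2\, v\cdot\nabla_x\psi\,dz .
\]
The two terms on the right are handled separately.

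\emph{First term.} It is bounded by $\|v\cdot\nabla_xf\|_{L^2_xH^{-1}_v}\|f\psi\|_{L^2_xH^1_v}$. We have $\nabla_v(f\psi)=\psi\nabla_vf+f\chi\,\eta'(N\cdot v)N$, and both $\psi$ and $\eta'$ are bounded. Hence $\|f\psi\|_{L^2_xH^1_v}\lesssim\|f\|_{L^2_xH^1_v}$.

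\emph{Second term.} Here $v\cdot\nabla_x\psi=\eta(N\cdot v)\,v\cdot\nabla\chi+\chi\,\eta'(N\cdot v)\,v^{\top}(\nabla N)v$, which is bounded by $C(1+|v|^2)$. This is the delicate point, because $\psi$ is differentiated along the unbounded transport field. To close it I use the Gaussian weighted inequality
\[
\int_{\Rg}|v|^2g^2\,d\gamma\lesssim\int_{\Rg}\bigl(g^2+|\nabla_vg|^2\bigr)\,d\gamma .
\]
It follows from $v\,d\gamma=-\nabla_v d\gamma$: integrating by parts gives $\int|v|^2g^2\,d\gamma=\int\bigl(d\,g^2+2g\,v\cdot\nabla_vg\bigr)\,d\gamma$, and the cross term is absorbed by Young's inequality. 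Applying this for each $x$ bounds the second term by $\lesssim\|f\|^2_{L^2_xH^1_v}$.

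Combining the two bounds gives $\|f\|^2_{L^2(\Gamma,\omega_2)}\lesssim\|f\|_{\Hyp(\D)}^2$, after which the density argument concludes. The step I expect to be the main obstacle is the $|v|^2$ growth in the second term together with the regularity needed on $N$. This is exactly where $C^{1,1}$ is used, since it makes $\nabla N$ bounded, and where the Gaussian structure is used, to absorb the $|v|^2$ weight. The truncation $\eta$ is what prevents $\psi$ itself from producing additional $|v|$ growth in the $H^1_v$ norm.
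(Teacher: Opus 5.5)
Your argument is correct and follows essentially the route the paper indicates. You approximate by functions smooth up to the boundary, then integrate $v\cdot\nabla_x f$ against $f$ times an extension of $n_x\cdot v$ built from a Lipschitz normal field; that Lipschitz normal field is exactly where the $C^{1,1}$ regularity enters. Your truncation $\eta$, together with the Gaussian bound $\int_{\Rg}|v|^2g^2\,d\gamma\lesssim\|g\|_{H^1(\gamma)}^2$, is what produces the weight $\omega_2$ rather than $|n_x\cdot v|^2$, and it keeps the multiplier bounded on $L^2_xH^1_v(\D)$ despite the unbounded velocities.
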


See \cite[Proposition 7.8]{Niebel2026SharpTheory} for a proof. The proof first approximates $f\in \Hyp(\D)$ with functions $f_n\in C^\infty_c(\D)$
\footnote{\label{note:smooth}%
See for instance \cite[Proposition 2.2 in either version]{Armstrong2019VariationalEquation,Albritton2024VariationalEquation}, \cite[Lemma 4.2]{Silvestre2022HolderBoundary}, and \cite[Lemma 5.2]{Avelin2026WeakDomains}.
}%
, and then applies integration by parts to $v\cdot\nabla_x f_n$ against $(v \cdot n(x))f_n$, where $n(x)$ is an outward normal vector field defined in a neighbourhood of $\Gamma$ and coincides with $n_x$ on $\Gamma$. This is enough to recover the weight $|v\cdot n_x|^2$, but requires some regularity for $n(x)$. 
In fact, in \cite[Theorem 4.2]{Niebel2026SharpTheory} it is shown that, for $d\geq 2$ and $V=\Rg$, the trace estimate with weight $\omega_p(x,v) = \min\{|n_x \cdot v|,|n_x \cdot v|^p\}$ for $p\in[1,2)$ fails on every bounded $C^{1,1}$ domain. Therefore, Silvestre's trace theorem is sharp both in the parameter $p$ and in the H\"older regularity of the domains.

For the spherical velocity model $V=\Sd$, the trace problem was also fully solved in \cite{Niebel2026SharpTheory}. In dimension $d=1$, the trace estimate with trace weight $\omega_p$ holds for any $p\in[1,+\infty)$. In dimension $d\geq2$, it holds for $p\in[1,+\infty)$ on every bounded $C^{1,\alpha}$ domain with $\alpha \geq 1/(p+1)$, and fails for every $0<\alpha<1/(p+1)$ on some strictly convex bounded domain of exact regularity $C^{1,\alpha}$. The same regularity threshold holds for the confined-velocity model $V=\V$ too. In particular, the natural trace ($p=1$) has the regularity threshold $C^{1,1/2}$.

\begin{proposition}[Niebel, Valentini {\cite[Proposition 7.9]{Niebel2026SharpTheory}}]
    \label{th:trace_p}
    Let \(1\le p<\infty\) and let \(1/(p+1)\le\alpha\le1\). Let \(\Omega\subset\R^d\) be a bounded \(C^{1,\alpha}\) domain. Let $V$ be either $\Sd$ or a bounded Lipschitz domain $\V\subseteq \R^d$.
    Then boundary restriction extends to a bounded trace operator  
      \[
        \tr_x\colon \Hyp(\D) \to L^2(\partial\Omega\times V,
        \omega_p).
      \]
\end{proposition}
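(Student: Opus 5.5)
The plan is to prove the estimate for $f\in C^1_c(\overline{\D})$, which is dense in $\Hyp(\D)$ (see the footnote on smooth approximation). The boundary term will come from integrating by parts $v\cdot\nabla_x f$ against $f\,\Phi$, with a test weight $\Phi(x,v)$ built from a regularised normal field.

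\emph{Step 1: localisation.} Since $V$ is bounded (either $\Sd$ or $\V$), we have $\omega_p(x,v)\simeq|n_x\cdot v|^p$. By a partition of unity we work in a chart $\Omega\cap U=\{x_d>g(x')\}\cap U$ with $g\in C^{1,\alpha}$. There we use the layered normal field $N_\delta(x)$: the unit normal of the graph $g*\rho_\delta$, extended constantly in $x_d$. It satisfies $|N_\delta-n_x|\lesssim\delta^\alpha$ on $\partial\Omega$ and $|\nabla_x N_\delta|\lesssim\delta^{\alpha-1}$.

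\emph{Step 2: dyadic decomposition of the grazing region.} For $k\ge0$ let $w_k(s)$ be a smooth odd profile equal to $\operatorname{sign}(s)$ on $2^{-k}\le|s|\le2^{1-k}$, supported in $2^{-k-1}\le|s|\le2^{2-k}$, with $|w_k'|\lesssim2^k$. Take $\Phi_k(x,v)=\chi(x)\,w_k(v\cdot N_{\delta_k}(x))$ with $\delta_k=2^{-k/\alpha}$. This choice makes the error $|N_{\delta_k}-n_x|\lesssim2^{-k}$ comparable to the layer width, so on the boundary $(n_x\cdot v)\Phi_k\gtrsim|n_x\cdot v|$ on the $k$-th layer and $(n_x\cdot v)\Phi_k\gtrsim -2^{-k}$ elsewhere on its support. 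The identity
\[
\int_\Gamma (n_x\cdot v)f^2\Phi_k\,d\sigma=2\langle v\cdot\nabla_xf,f\Phi_k\rangle+\int_\D f^2\,v\cdot\nabla_x\Phi_k\,dz
\]
is then used to bound the layer flux $L_k\coloneqq\int_{\Gamma,\,|n_x\cdot v|\sim2^{-k}}|n_x\cdot v|f^2\,d\sigma$.

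For the first term we use $|\langle v\cdot\nabla_xf,f\Phi_k\rangle|\le\|v\cdot\nabla_xf\|_{L^2_xH^{-1}_v}\|f\Phi_k\|_{L^2_xH^1_v}$. The derivative $\nabla_v\Phi_k$ is of size $2^k$ but lives on a $v$-slab of width $\sim2^{-k}$. By the one-dimensional trace inequality across the slab, $\int_{\text{slab}}f^2\,dv\lesssim2^{-k}\|f(x,\cdot)\|^2_{H^1(V)}$. Hence $\|f\Phi_k\|_{L^2_xH^1_v}\lesssim2^{k/2}\|f\|_{L^2_xH^1_v}$.

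The second term is bounded by $\delta_k^{\alpha-1}2^k$ times the mass of $f^2$ on the same slab. This gives $\lesssim 2^{k/\alpha}\,\|f\|^2_{L^2_xH^1_v}$, again using the slab estimate.

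\emph{Step 3: summation.} We have
\[
\int_\Gamma\omega_pf^2\,d\sigma\simeq\sum_k2^{-k(p-1)}L_k .
\]
Summability of $\sum_k 2^{-k(p-1)}\bigl(2^{k/2}+2^{k/\alpha}\bigr)$ times the relevant quantities, together with absorbing the negative contributions from neighbouring layers (each of size $2^{-k}$ times a layer mass, handled by a discrete Gronwall/telescoping in $k$), should yield the threshold $\alpha\ge1/(p+1)$.

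\emph{Main obstacle.} I expect the main obstacle to be exactly this exponent bookkeeping. The crude bounds above lose too much: the $x$-derivative cost $2^{k/\alpha}$ must be compensated by the fact that $v\cdot\nabla_x\Phi_k$ only sees the tangential variation of $N_{\delta_k}$, and that $f^2$ near $\partial\Omega$ at distance $\lesssim\delta_k$ carries an extra small factor.

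To recover this, I would first split $\D$ into a boundary strip of width $\delta_k$ and its complement. In the strip I would apply Step 2 recursively, as a bootstrap in $k$, to gain the factor $\delta_k\cdot2^{k}$. Optimising $\delta_k$ against the weight $2^{-kp}$ should then produce the condition $\alpha(p+1)\ge1$. If this recursion fails, my fallback is to replace the dyadic cutoffs by a single weight $\Phi=\chi\,(v\cdot N_{\delta(v)})\,|v\cdot N_{\delta(v)}|^{p-1}$ with $\delta$ tied to $|v\cdot n|^{1/\alpha}$, and to redo the same estimates in one piece.
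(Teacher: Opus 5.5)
The paper does not prove this proposition; it quotes it from \cite[Proposition 7.9]{Niebel2026SharpTheory}, so there is no in-paper route to compare against. Judged on its own, your argument has two structural gaps, not just bookkeeping ones. The first is the velocity term. Your bound $\|f\Phi_k\|_{L^2_xH^1_v}\lesssim 2^{k/2}\|f\|_{L^2_xH^1_v}$ is sharp for any $f$ with nonzero restriction to $\{v\cdot N_{\delta_k}=0\}$. So this term alone contributes $\sum_k 2^{-k(p-1)}2^{k/2}$, which diverges for every $p\le 3/2$. This happens for every $\alpha$, and already for a flat boundary, where the $x$-derivative term is absent. Summing the multipliers before pairing does not help: with $s=v\cdot N$, $f\,\operatorname{sign}(s)|s|^{p-1}$ lies in $H^1_v$ for all $f\in H^1_v$ only if $p>3/2$. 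Hence no scheme that pairs $v\cdot\nabla_xf\in L^2_xH^{-1}_v$ with $f$ times a sign-type velocity multiplier can reach $1\le p\le 3/2$. In particular it cannot reach the natural weight $p=1$ on $C^{1,1/2}$ domains, which is the main content of the statement. A further ingredient exploiting the transport structure is needed there, and your strip/bootstrap remedy does not touch this term.

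The second gap is the geometric term, which is where the threshold lives. Counted correctly it is $2^k\delta_k^{\alpha-1}\cdot 2^{-k}=2^{k(1/\alpha-1)}$ per layer; your $2^{k/\alpha}$ drops the slab gain. After weighting you get $\sum_k 2^{k(1/\alpha-p)}$, so you need $\alpha>1/p$. That is one full power of $|n_x\cdot v|$ short of $\alpha\ge 1/(p+1)$, and a geometric series cannot give the endpoint anyway. This cannot be tuned away, because $\delta_k\lesssim 2^{-k/\alpha}$ is what keeps $\operatorname{sign}(v\cdot N_{\delta_k})=\operatorname{sign}(n_x\cdot v)$ on the support of $\Phi_k$ on $\Gamma$.

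The proposed recovery is not yet an argument. $N_{\delta_k}$ is constant in $x_d$, so $\nabla_xN_{\delta_k}$ is not concentrated in any strip. Moreover, $f\in\Hyp(\D)$ gives no rate for the mass $\int_{\{d(x)<\delta_k\}}\int_{\mathrm{slab}_k}f^2$. Extracting the needed $2^{-2k}$ by integrating over parallel surfaces would require uniform trace bounds there with weight $|n_x\cdot v|^{p-1}$ at the threshold, which is a stronger statement than the one you are proving; the recursion is circular. The fallback is the same computation in continuous form and again gives weight $|n_x\cdot v|^q$ only for $\alpha>1/q$. Note also that $(v\cdot N)|v\cdot N|^{p-1}$ produces the boundary weight $|n_x\cdot v|^{p+1}$, not $|n_x\cdot v|^{p}$.

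The correct threshold comes from grazing chords. At angle $s=|n_x\cdot v|$, a $C^{1,\alpha}$ boundary can have chords of length $\sim s^{1/\alpha}$ and depth $\sim s^{1+1/\alpha}$. A function constant along them and localised at scale $s$ in $n_x\cdot v$ has $\int_\Gamma\omega_pf^2\big/\|f\|^2_{\Hyp(\D)}\sim s^{p+1-1/\alpha}$. A proof must see that the curvature error acts on $f$ only inside that thin layer. A global slab bound $\int_{\mathrm{slab}_k}f^2\lesssim 2^{-k}\|f\|^2_{L^2_xH^1_v}$ over the whole chart cannot detect this.
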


\subsubsection{The functional framework for the inflow problem}

Before the full resolution of the trace problem, Zhu \cite{Zhu2025RegularityDomains} introduced the notion of solution pairs
to define boundary value problems for the Fokker-Planck equation in the absence of a trace operator: a pair function-trace is a solution if it satisfies the weak formulation of the equation up to the boundary, that is, against $C^1_c(\overline{\D})$. Continuity is not assumed in the correspondence function-trace. This approach proves relevant in light of recent results in \cite{Niebel2026SharpTheory} showing that the natural trace problem indeed fails for a non-negligible class of spatial domains (for $V=\Rg$, all domains with regularity $C^{1,1}$; for $V=\Sd$, some strictly convex domains with regularity lower than $C^{1,1/2}$).

Solution pairs were initially used by Mischler in \cite{Mischler1999OnEquation} for the $L^1$ solutions to Vlasov equations, and then generalised to other kinetic equations in \cite{Mischler2010KineticConditions}. Also, if a pair function-trace is the solution to a given equation, it can be handled jointly thanks to the renormalisation methods developed by DiPerna and Lions \cite{Diperna1973GlobalEquations,DiPerna1989OrdinarySpaces}, including the boundary term in the renormalisation.

In this paper, Problem \ref{prob:trace} is modelled on Zhu's notion of solution pairs. Accordingly, we introduced the spaces $\Hyptr(\D)$ and $\Hyprefl(\D)$, which provide a suitable functional framework to solve the problems. In fact, we show that every $f\in\Hyptr(\D)$ is a weak solution up to the boundary of an equation of the form \eqref{eq:kolm_both} -- see Section \ref{sec:trace_theorem}.

\subsubsection{Energy estimates: renormalisation and Poincaré inequalities}
\label{sec:comm_poincare}

The weak formulation of the Kolmogorov equation  \eqref{eq:kolm_rad_weak} on a bounded domain $\Omega$ does not enjoy enough continuity to be tested for the solution itself. In particular, integration by parts of functions in $\Hyp(\D)$ against the transport operator $v\cdot \nabla_x$ is not justified a priori, even if $\Omega$ is $C^{1,1}$ and a boundary function is locally defined (Lemma \ref{th:trace_away}). As explained in Section \ref{sec:main_results}, if $f$ is a weak solution to \eqref{eq:kolm_both} in $\D$ up to the boundary, the weak formulation itself implies that integration by parts for $v\cdot \nabla_x$ holds, but only against test functions in $C^1_c(\overline{\D})$. 
This is the main obstacle in proving energy estimates directly from the equation -- see \cite[Remark 4.3]{Albritton2024VariationalEquation}. 

To overcome the obstacle, the strategy followed in \cite{Armstrong2019VariationalEquation} (the first version of \cite{Albritton2024VariationalEquation}) consisted of approximating solutions with smooth functions vanishing in a neighbourhood of the grazing set $\Gamma_0$ \cite[Lemma 4.5]{Armstrong2019VariationalEquation}, and then justifying integration by parts just for the latter \cite[Proposition 4.4]{Armstrong2019VariationalEquation}. However, Brunken and Smetana \cite[Supplement material, Section SM2]{Brunken2022StableEquation} pointed out the incompleteness of the proof of the density result. 

Still, following techniques of Silvestre \cite{Silvestre2022HolderBoundary} and Zhu \cite{Zhu2025RegularityDomains}, energy estimates for solutions to Problems \ref{prob:trace}, \ref{prob:refl}{} and \ref{prob:torus}{} with $c>0$ can be achieved through a suitable renormalisation formula (Lemma \ref{th:normalisation_kolm}). However, the estimates obtained in this way (see inequalities \eqref{eq:zz_with_c} and \eqref{eq:zz_with_c2}) blow up in the limit $c\to 0$. Then, to find a solution to the three problems for $c=0$, the key step is proving estimates that are stable in the limit $c\to 0$ using the Poincaré-type inequality \eqref{eq:poincare}, following a strategy that was carried out in \cite{Albritton2024VariationalEquation} for $\Omega=\torus$. Hypoelliptic Poincaré inequalities for $\Hyp(\D)$, both when $\Omega$ is a bounded Lipschitz domain and when $\Omega =\torus$, were given in \cite[Theorem 1.3]{Albritton2024VariationalEquation}, and we recall them in Theorem \ref{th:poincare2}. Actually, in \cite[Theorem 1.3]{Albritton2024VariationalEquation} the inequality \eqref{eq:poincare} was already proved for the following space (we use the notation from \cite{Albritton2024VariationalEquation}):
\begin{equation}
\label{eq:trace0_armstrong}
    \HypO(\D) \coloneqq \overline{\{f \in C_c^\infty(\overline{\D}) \mid f|_{\Gamma_-}=0 \}}^{\Hyp(\D)}.
\end{equation}
Notice that $\HypO(\D) \subseteq \Hyptrp(\D)$ and the inclusion, a priori, could be proper. In Theorem \ref{th:poincare1}, we extend to $\Hyptr(\D)$ the proof of \eqref{eq:poincare} that was given in \cite[Theorem 1.3]{Albritton2024VariationalEquation} for $\HypO(\D)$ -- see Remark \ref{rm:differences_poincare} for more details.

\subsubsection{Specular reflection}
Two weak formulations for the specular reflection problem can be found in the literature: the one given in Problem \ref{prob:refl}{}, used for instance by Silvestre \cite{Silvestre2022HolderBoundary}, and the one given by Problem \ref{prob:trace_Ro}, used by Zhu \cite{Zhu2025RegularityDomains}.

For the latter, well-posedness can be obtained for a source $S\in L^\infty(\D)$ (Proposition \ref{prop:refl_torus_trace}): a direct proof following ideas of Zhu \cite[Corollary 2.11]{Zhu2025RegularityDomains} is recalled in Appendix \ref{app:auxiliary}. The key point is that, under specular reflection, the boundary term
$$\int_{\Gamma} (n_x \cdot v)f_{\Gamma}^2 \,d\sigma$$
vanishes: therefore, no information on the behaviour of the solution at the boundary is naturally carried by the problem. Indeed, in Zhu's approach, a solution pair to Problem \ref{prob:trace_Ro} is found through a weak-$*$ limit using the weak maximum principle given in Proposition \ref{th:max_princ}\ref{eq:energy_ART}, which indeed is helpful only when the source is essentially bounded.

On the contrary, the formulation without trace given in Problem \ref{prob:refl}{} satisfies a full well-posedness result in $L^2$, not just for essentially bounded sources (Theorem \ref{th:wellpos_refl_initial}). Moreover, it also recovers Proposition \ref{prop:refl_torus_trace} as a straightforward corollary, as shown in its proof in Section \ref{sec:refl_torus}.

\section{Function setting and core a priori estimates}
\label{sec:hypo_setup}

Consider either $V=\Sd$ or $V=\Rg$.

\subsection{Function spaces}
First, let $\Omega$ be a bounded Lipschitz domain.
As solutions to Problems \ref{prob:trace} and \ref{prob:refl}{} are in particular solutions in the interior of $\D$, they also lie in $\Hyp(\D)$. However, the property $v \cdot \nabla_xf \in L^2_xH^{-1}_v(\D)$ is not enough to encode the behaviour of $f$ at the boundary; that is, it is not guaranteed that, when evaluated on $\varphi\in C^1_c(\overline{\D})$ or $\varphi\in \Crefl$, the operator $v \cdot \nabla_xf$ satisfies \eqref{eq:transA} or \eqref{eq:transB} respectively. For this reason, we have introduced the solution spaces $\Hyptr(\D)$ and $\Hyprefl(\D)$, in which functions are such that their transport $v \cdot \nabla_xf$ satisfies the dual formulation up to the boundary.

Regarding the inflow boundary problem, the trace terminology and notation introduced in Problem \ref{prob:trace} and in the space $\Hyptr(\D)$ are coherent, since solutions to Problem \ref{prob:trace} lie in $\Hyptr(\D)$. In particular, notice how the definition of the latter combines the notion of solution pairs from \cite{Zhu2025RegularityDomains} with the definition of $\Hyp(\D)$ from \cite{Albritton2024VariationalEquation}.

Concerning specular reflection, we introduced Problem \ref{prob:refl}{} as a problem without boundary because, to some extent, we are working on the quotient manifold obtained from $\D$ via the identification $(x,v) \sim (x, v-2(n_x \cdot v) n_x)$. Also, in the definition of $\Hyprefl(\D)$, the reflection symmetry required at the boundary becomes a geometric constraint imposed at the level of function spaces, similarly to what happens on $\torus$.

\begin{remark}
    Observe that, for $V=\Sd$, the weak formulations of Section \ref{sec:problems} hold for $\varphi\in H^1_0(\D)$ and $H^1(\D)$ respectively (with the assumption $\Tr_x \varphi = \Bo_\Ro (\Tr_x \varphi)$ for problem \ref{prob:refl}). This is not the case for $V=\Rg$, as the term $\int_\D f (v \cdot \nabla_x \varphi) \, dz$ might be unbounded.
\end{remark}

\subsection{Boundary regularity}
\label{sec:boundary_regularity}
Although the results in this section do not assume a stronger regularity for $\Omega$, the renormalisation formula (Lemma \ref{th:normalisation_kolm}) later used to prove energy estimates requires $C^{1,\alpha_V}$ domains. As mentioned before, this is to ensure that the sub-optimal trace operator $\tr_x$ with trace weight $\omega_2(x,v) = \min\{|n_x \cdot v|,|n_x \cdot v|^2\}$ is bounded (Propositions \ref{th:trace_2} and \ref{th:trace_p}). This is enough to show that, when $\Omega$ is a $C^{1,\alpha_V}$ bounded domain, for $f\in \Hyp(\D)$ and $\varphi \in C_c^1(\overline{\D})$, it holds
\begin{equation}\label{eq:green}
    \langle v \cdot \nabla_x f, \varphi\rangle
    = - \int_{\D} f (v \cdot \nabla_x \varphi) \ dz + \int_{\Gamma} (v \cdot n_x) \tr_x(f) \varphi \ d\sigma.
\end{equation}
This can be shown by approximating $f$ in $C^\infty_c(\D)$ and applying integration by parts.
Consequently, when $\Omega$ is a $C^{1,\alpha_V}$ bounded domain, the function spaces reduce to
\begin{gather}
\label{eq:hyptr2}
\Hyptr(\D) 
= 
\left\{ f\in \Hyp(\D) \mid
    \tr_xf \in L^2(\Gamma, |n_x \cdot v|) \right\},
\\
\label{eq:hyprefl2}
\Hyprefl(\D) 
= 
\left\{ f\in \Hyp(\D) \mid
    \tr_xf \in L^2(\Gamma, |n_x \cdot v|),
    \tr_x f = \Bo_\Ro \tr_xf \ \text{on} \ \Gamma_-\right\}.
\end{gather}
Equality \eqref{eq:hyptr2} follows directly from \eqref{eq:green}. Regarding \eqref{eq:hyprefl2}, if $f\in\Hyprefl(\D)$, assumption \eqref{eq:transB} and equation \eqref{eq:green} together give
\[
\int_\Gamma
(n_x\cdot v) \tr_xf \phi \, d\sigma =0
\qquad
\text{for} \ \phi\in \Crefl.
\]
Performing the change of variables $v\mapsto R_xv \coloneqq v-2(n_x \cdot v) n_x$ in the integral over $\Gamma_+$ gives, for every $\phi\in \Crefl$,
\[
\int_{\Gamma_-}
(n_x\cdot v)
\big[
\tr_xf(x,v)- \tr_xf(x,R_xv)
\big]\varphi(x,v) \, d\sigma
=0.
\]
Hence $\tr_xf(x,v)= \tr_xf(x,R_xv)= \Bo_\Ro \tr_x f (x,v)$ for a.e. $(x,v)\in\Gamma_-$. The other inclusion is straightforward.

\subsection{Poincaré inequality}
\label{sec:poincare}

In \cite[Theorem 1.3]{Albritton2024VariationalEquation}, the authors introduced a hypoelliptic Poincaré inequality for functions in $\Hyp(\D)$, with Gaussian velocities $V=\Rg$. The proof adapts straightforwardly to spherical velocities $V=\Sd$, and we include it here for the sake of completeness.

\begin{theorem}\label{th:poincare2}
Let $\Omega$ be either a bounded Lipschitz domain or $\torus$. Then, for any $f \in \Hyp(\D)$, it holds
    \begin{equation}
    \label{eq:poincareD}
        \|f- \langle f \rangle_{\D}\|_{L^2(\D)} \lesssim
        \|{\nabla}_v f\|_{L^2(\D)} + \|v \cdot \nabla_xf\|_{L^2_xH^{-1}_v(\D)}. 
    \end{equation}
\end{theorem}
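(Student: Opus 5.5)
The proof follows the argument of \cite[Theorem 1.3]{Albritton2024VariationalEquation}: split $f$ into its velocity average and the fluctuation around it, and control the average by an $H^{-1}$ bound on its spatial gradient.

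\textbf{Step 1: reduction to the velocity average.} Write $f = \langle f\rangle_V + (f-\langle f\rangle_V)$. Since $V$ is either $\Sd$ or $\Rg$, the Poincaré (respectively Gaussian Poincaré) inequality on $V$ holds for each fixed $x$. Integrating in $x$ gives
\[
\|f-\langle f\rangle_V\|_{L^2(\D)}\lesssim \|\nabla_v f\|_{L^2(\D)}.
\]
It therefore suffices to prove
\[
\|\langle f\rangle_V-\langle f\rangle_\D\|_{L^2(\Omega)} \lesssim \|\nabla_v f\|_{L^2(\D)}+\|v\cdot\nabla_x f\|_{L^2_xH^{-1}_v(\D)}.
\]
Note that $\langle\langle f\rangle_V\rangle_\Omega=\langle f\rangle_\D$.

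\textbf{Step 2: an $H^{-1}$ bound on $\nabla_x\langle f\rangle_V$.} For a Lipschitz domain $\Omega$ (and trivially for $\torus$), Nečas' inequality gives
\[
\|g-\langle g\rangle_\Omega\|_{L^2(\Omega)}\lesssim \|\nabla_x g\|_{H^{-1}(\Omega)}.
\]
Apply it to $g=\langle f\rangle_V$; it then suffices to bound $\langle\nabla_x g,\psi\rangle$ for $\psi\in C^\infty_c(\Omega;\R^d)$ with $\|\psi\|_{H^1_0}\le1$. Fix a smooth bounded vector-valued function $\eta(v)$ with $\int_V v_i\,\eta_j(v)\,dv = \delta_{ij}$:
\begin{itemize}
\item for $V=\Rg$, take $\eta(v)=v$;
\item for $V=\Sd$, take $\eta(v)=d\,v$, using $\int_{\Sd}v_iv_j\,d\theta=\delta_{ij}/d$ with the normalised measure.
\end{itemize}
Define the test function $\varphi(x,v)=\eta(v)\cdot\psi(x)$, which lies in $C^1_c(\D)$. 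Then
\[
-\int_\D f\,(v\cdot\nabla_x\varphi)\,dz = -\int_\Omega g\,\nabla_x\cdot\psi\,dx \;-\; \int_\D (f-g)\,(v\cdot\nabla_x\varphi)\,dz .
\]
The left-hand side equals $\langle v\cdot\nabla_x f,\varphi\rangle$, which is bounded by $\|v\cdot\nabla_x f\|_{L^2_xH^{-1}_v}\|\varphi\|_{L^2_xH^1_v}\lesssim\|v\cdot\nabla_x f\|_{L^2_xH^{-1}_v}$. The last term is bounded by $\|f-g\|_{L^2}\,\|v\otimes\eta\|_{L^2_v}\,\|\nabla_x\psi\|_{L^2}$. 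For $V=\Rg$ the factor $\|v\otimes\eta\|_{L^2_v}$ is finite because Gaussian moments are finite; for $V=\Sd$ it is bounded trivially. Combined with Step 1, this yields
\[
\|\nabla_x g\|_{H^{-1}}\lesssim \|\nabla_v f\|_{L^2(\D)}+\|v\cdot\nabla_x f\|_{L^2_xH^{-1}_v(\D)}.
\]
Applying Nečas' inequality and then Step 1 concludes the proof.

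\textbf{Main obstacle.} The delicate point is justifying Nečas' inequality on Lipschitz $\Omega$, which is a classical result via the Bogovskii operator, and making sure only test functions compactly supported in $\Omega$ are used, so that no boundary trace is needed. This is exactly why the statement holds on all of $\Hyp(\D)$ without any trace assumption. For $\Sd$, one also has to check that the tangential $H^1$ norm of $\eta$ is finite, which is clear since $\eta$ is smooth on a compact manifold.
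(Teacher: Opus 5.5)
Your proof is essentially the paper's: Poincaré in $v$, then an $H^{-1}_x$ bound on $\nabla_x\langle f\rangle_V$ obtained by testing $v\cdot\nabla_x f$ against $\psi(x)\eta(v)$ with $\int_V v_i\eta_j\,dv=\delta_{ij}$, then the Ne\v{c}as/Bogovskii inequality on Lipschitz $\Omega$.

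The one slip is for $V=\Rg$. There $\eta(v)=v$ is neither bounded, as you first require, nor compactly supported, so $\varphi=\eta\cdot\psi\notin C^1_c(\D)$. The identity $\langle v\cdot\nabla_x f,\varphi\rangle=-\int_\D f\,(v\cdot\nabla_x\varphi)\,dz$ therefore does not follow directly from the definition. You can repair it in either of two ways:
\begin{itemize}
\item Truncate in $v$. The truncations converge to $\varphi$ in $L^2_xH^1_v(\D)$, and $f|v|^2\in L^1(\D)$ because Gaussian moments are finite, so dominated convergence passes the identity to the limit.
\item Follow the paper and choose $\eta\in C^\infty_c(V)$ with $\int_V v_i\eta_j\,dv=\delta_{ij}$, which avoids the issue entirely.
\end{itemize}
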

\begin{proof}
Let $f \in \Hyp(\D)$.
First, applying the Gaussian Poincaré inequality in $v\in\Rg$ or the standard Poincaré inequality in $v\in\Sd$, and integrating in $x\in\Omega$, we get
    \begin{equation}
    \label{eq:zz_stand_poincare}
        \|f-\langle f \rangle_{{V}}\|_{L^2(\D)} \lesssim \|{\nabla}_{v} f\|_{L^2(\D)}.
    \end{equation}
Then, we show that
    \begin{equation}
    \label{eq:zz_der_x_average}
        \|\nabla_x\langle f\rangle_{{V}}\|_{H_x^{-1}(\Omega)} 
        \lesssim \|{\nabla}_{v} f\|_{L^2(\D)}+\|v \cdot \nabla_xf\|_{L^{2}_xH_v^{-1}(\D)}.
    \end{equation}
    Fix some functions $\xi_{1}, \ldots, \xi_{d} \in C_c^{\infty}({V})$ satisfying
    \begin{equation*}
        \int_{{V}} v \xi_{i}(v) \ dv =e_{i},
        \qquad i \in\{1, \dots, d\},
    \end{equation*}
    where $\{e_1, \dots, e_d\}$ is the canonical orthonormal basis for $\R^d$. Then, for each test function $\varphi \in C^1_c(\Omega)$ and for each $i \in\{1, \dots, d\}$, we have
    \begin{align*}
        \int_{\Omega} \partial_{x_{i}} \varphi(x)\langle f\rangle_{{V}}(x) \ dx
        =& \int_{\D} v \cdot \nabla_{x} \varphi(x) \langle f\rangle_{{V}}(x) \xi_{i}(v)  \ dz\\
        =& \int_{\D} v \cdot \nabla_{x} \varphi(x) f(x, v) \xi_{i}(v) \ dz \\
        &+ \int_{\D} v \cdot \nabla_{x} \varphi(x)\big[\langle f\rangle_{{V}}(x)-f(x, v)\big] \xi_{i}(v) \ dz.
    \end{align*}
    By definition of the operator $v \cdot \nabla_xf$ on $C^1_c(\D)$, the first integral is bounded by
    \begin{align} \label{eq:zz_ibp_false}
        \left|\int_{\D} v \cdot \nabla_{x} \varphi(x) f(x, v) \xi_{i}(v) \ dz\right| 
        & = \left| \langle v \cdot \nabla_xf, \varphi \xi_{i} \rangle  \right| \\ 
        &\lesssim \|\varphi \xi_{i}\|_{L^{2}_xH_v^{1}(\D)} \|v \cdot \nabla_{x} f \|_{L^{2}_xH_{v}^{-1}(\D)} \notag \\
        & \lesssim \|\varphi\|_{H^1_x(\Omega)} \|v \cdot \nabla_{x} f \|_{L^{2}_xH_{v}^{-1}(\D)}. \notag
    \end{align}
    To control the second term, we use \eqref{eq:zz_stand_poincare}:
    \begin{align*}
        &\left| \int_{\D} v \cdot \nabla_{x} \varphi(x) \big[\langle f\rangle_{{V}}(x)-f(x, v)\big]\xi_{i}(v)  \ dz\right| \\
        & \hspace{2cm} \lesssim \int_{\D}|v||\xi_{i}(v)||\nabla_{x} \varphi(x)||f(x, v)-\langle f\rangle_{{V}}(x)| \ dz \\
        & \hspace{2cm} \lesssim \|\varphi\|_{H_x^{1}(\Omega)}\|{\nabla}_{v} f\|_{L^{2}(\D)}.
    \end{align*}
    By density of $C^1_c(\Omega)$ in $H^1_0(\Omega)$, we obtain \eqref{eq:zz_der_x_average}.
    Finally, if we prove
    \begin{equation}\label{eq:from_AM}
        \|\langle f\rangle_{{V}}-\langle f \rangle_{\D}\|_{L^{2}(\Omega)}
        \lesssim \|\nabla_x\langle f\rangle_{{V}}\|_{H^{-1}(\Omega)},
    \end{equation}
    then a combination of \eqref{eq:zz_stand_poincare}, \eqref{eq:zz_der_x_average} and \eqref{eq:from_AM} gives \eqref{eq:poincareD}:
    \begin{align*}
        \|f-\langle f \rangle_{\D}\|_{L^{2}(\D)}
        & \leq \|f-\langle f \rangle_{{V}}\|_{L^{2}(\D)} + \|\langle f\rangle_{{V}}-\langle f \rangle _{\D}\|_{L^{2}(\D)} \\
        & \lesssim \|{\nabla}_{v} f\|_{L^{2}(\D)} + \|\nabla_x\langle f\rangle_{{V}}\|_{H^{-1}(\Omega)} \\
        & \lesssim \|{\nabla}_{v} f\|_{L^{2}(\D)} +\|v \cdot \nabla_{x} f\|_{L^{2}_xH^{-1}_v(\D)}.
    \end{align*}
    
    The inequality \eqref{eq:from_AM} is given in \cite[Lemma 3.1]{Albritton2024VariationalEquation}; let us recall the proof when $\Omega$ is a bounded Lipschitz domain. For $h \in L^2(\Omega)$, consider the problem
    $$
    \begin{cases}
    \nabla \cdot F=h-\langle h\rangle_{\Omega} & \text {in } \Omega,
    \\ 
    F=0 & \text {on } \partial \Omega .
    \end{cases}
    $$
    Bogovskii's operator guarantees the existence of a solution $F \in (H_0^1(\Omega))^d$ satisfying
    \begin{equation*}
    \|F\|_{H^1(\Omega;\R^d)} \lesssim \|h-\langle h\rangle_{\Omega}\|_{L^2(\Omega)}
    \end{equation*}
    Then we have
    $$
    \|h-\langle h\rangle_{\Omega}\|_{L^2(\Omega)}^2
    =
    \int_{\Omega} (h-\langle h\rangle_{\Omega}) (\nabla \cdot F)=-\int_\Omega \nabla h \cdot F 
    \leq
    \|\nabla h\|_{H^{-1}(\Omega)}\|F\|_{H^1(\Omega)},
    $$
    which yields
    \begin{equation*}
        \|h-\langle h \rangle_{\Omega}\|_{L^{2}(\Omega)}
        \lesssim \|\nabla h\|_{H^{-1}(\Omega)}.
    \end{equation*}
    This is exactly \eqref{eq:from_AM} for $h=\langle f\rangle_V$.
\end{proof}

Building on Theorem \ref{th:poincare2}, we prove here Theorem \ref{th:poincare1}. Let us first show how we use the regularity assumption on the boundary of $\Omega$. Basically, it ensures that the two subsets $\Gamma_-$ and $\Gamma_+$ can be topologically separated, at least around one point of $\Gamma$.

\begin{lemma}\label{th:lemma_poincare}
    Let $\Omega$ be a bounded Lipschitz domain such that $\partial\Omega$ is $C^1$ at least at one point $x_0\in \partial\Omega$. Then, there exists a function $\varphi_0\in C_c^\infty(\overline{\D})$ satisfying $\varphi_0=0$ on $\Gamma_+ \cup \Gamma_0$, $v \cdot \nabla_x \varphi_0 \in L^2(\D)$, and
    \begin{equation}\label{eq:lemma_poincare}
        \frac{1}{|\D|}\int_\D v \cdot \nabla_x \varphi_0 \ dz =1.
    \end{equation}
\end{lemma}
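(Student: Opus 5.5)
We construct $\varphi_0$ as a product $\varphi_0(x,v)=\chi(x)\eta(v)$, localised near the $C^1$ point $x_0$. The velocity factor will vanish near the grazing and outflow directions at the boundary, and the integral identity will come from the divergence theorem.

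\textbf{Choice of $\eta$.} Let $n_0\coloneqq n_{x_0}$ be the outward normal at $x_0$. Since $\partial\Omega$ is $C^1$ at $x_0$, the normal $n_x$ is continuous at $x_0$ along $\partial\Omega$. Hence there is $r>0$ such that $|n_x-n_0|<\delta$ for a.e. $x\in\partial\Omega\cap B_r(x_0)$, with $\delta>0$ small. Pick $\eta\in C_c^\infty(V)$ with $\eta\ge0$, $\eta\not\equiv0$, supported in the cone $\{v : v\cdot n_0 < -2\delta|v|\}$, intersected with an annulus $\{1/2<|v|<2\}$ when $V=\Rg$. For $v\in\supp\eta$ and $x\in\partial\Omega\cap B_r(x_0)$ we get $n_x\cdot v\le n_0\cdot v+\delta|v|<-\delta|v|<0$. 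So such $(x,v)$ lie in $\Gamma_-$, at a positive distance from $\Gamma_0$.

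\textbf{Choice of $\chi$.} Take $\chi\in C_c^\infty(B_r(x_0))$ with $\chi\ge0$ and $\chi(x_0)>0$. Then $\varphi_0=\chi\eta$ lies in $C_c^\infty(\overline{\D})$, since $\eta$ has compact support in $v$. On $\Gamma$ the function $\varphi_0$ is nonzero only at points where $x\in B_r(x_0)$ and $v\in\supp\eta$, and all such points are in $\Gamma_-$. Therefore $\varphi_0=0$ on $\Gamma_+\cup\Gamma_0$. Moreover $v\cdot\nabla_x\varphi_0=\eta(v)\,v\cdot\nabla\chi(x)$ is bounded with compact support, so it belongs to $L^2(\D)$.

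\textbf{Normalisation.} By Fubini and the divergence theorem on the Lipschitz domain $\Omega$,
\begin{equation*}
\int_\D v\cdot\nabla_x\varphi_0\,dz=\int_V\eta(v)\int_{\partial\Omega}\chi(x)\,(n_x\cdot v)\,ds\,dv .
\end{equation*}
The integrand $\chi\,\eta\,(n_x\cdot v)$ is $\le0$ everywhere and strictly negative on a set of positive $ds\,dv$-measure. That set is $\{\chi>0\}\cap\partial\Omega$ times $\{\eta>0\}$, which has positive measure because $\chi(x_0)>0$ and $\partial\Omega$ has positive surface measure near $x_0$. Hence the integral is a strictly negative number $-m$. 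Replacing $\varphi_0$ by $-(|\D|/m)\varphi_0$ gives \eqref{eq:lemma_poincare} and keeps all the other properties.

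\textbf{Main obstacle.} The key step is using the $C^1$ hypothesis to get the uniform sign of $n_x\cdot v$ on the support. At a merely Lipschitz point the normals oscillate, and no fixed velocity cone would stay inside $\Gamma_-$. Once the sign is uniform, everything else is routine.
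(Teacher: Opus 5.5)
Your proof is correct and follows essentially the same route as the paper. Both arguments use continuity of $n_x$ at $x_0$ to place a neighbourhood of $x_0$ times a set of velocities inside $\Gamma_-$, take a one-signed smooth bump supported there, and apply the divergence theorem to get a nonzero integral that is then rescaled. Your product form $\chi(x)\eta(v)$ with a velocity cone, in place of the paper's general bump near $(x_0,v_0)$, is only a cosmetic difference.
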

\begin{proof}
    The proof is adapted from \cite[Theorem 1.3, Step 5]{Albritton2024VariationalEquation}. The $C^1$ regularity of the boundary at $x_0\in\partial\Omega$ ensures that the vector field $n_x$ is continuous at $x_0$. Thus, if $v_0\in{V}$ is such that $n_{x_0}\cdot v_0 <0$, then there exists $r>0$ such that $n_{x}\cdot v <0$ for $(x,v) \in (B_r(x_0) \cap \partial\Omega)\times (B_r(v_0)\cap {V})$. In other words, there exists $r>0$ such that $(B_r(x_0) \cap \partial\Omega)\times (B_r(v_0)\cap {V}) \subseteq \Gamma_-$. Therefore, we select a non-positive function $\varphi_0\in C_c^\infty(\overline{\D})$ with support in $(B_r(x_0) \cap \overline{\Omega})\times (B_r(v_0)\cap {V})$ and $\varphi_0(x_0,v_0)<0$. By the formula of integration by parts, we have that
    \begin{equation*}
        \frac{1}{|\D|}\int_\D v \cdot \nabla_x \varphi_0 = \frac{1}{|\D|}\int_\Gamma (n_x \cdot v) \varphi_0 >0,
    \end{equation*}
    and the integral on the right-hand side can be made equal to $1$ after multiplying $\varphi_0$ by a suitable constant.
\end{proof}

We can now address the Poincaré inequality for $\Hyptr(\D)$.

\begin{proof}[Proof of Theorem \ref{th:poincare1}]
    Let $f \in \Hyptr(\D)$. To prove \eqref{eq:poincare}, it is sufficient to show \eqref{eq:zz_mean_poincare} and combine it with \eqref{eq:poincareD}.
    Consider the test function $\varphi_0$ constructed in Lemma \ref{th:lemma_poincare}. We first use \eqref{eq:lemma_poincare} to split the average of $f$ as
    \begin{equation}
    \label{eq:zz_poincare_terms}
        \langle f \rangle_{\D}
        = \frac{1}{|\D|} \int_{\D} f (v \cdot \nabla_{x} \varphi_{0}) \ dz - \frac{1}{|\D|} \int_{\D}(f-\langle f \rangle _{\D}) (v \cdot \nabla_{x} \varphi_{0}) \ dz.
    \end{equation}
    For the first integral, by \eqref{eq:transA},
    \begin{equation}\label{eq:zz_ibp}
    \begin{aligned} 
        \left|\frac{1}{|\D|}\int_{\D} f (v \cdot \nabla_{x} \varphi_0) \  dz \right| 
        =&
        \left|-\frac{1}{|\D|}\langle v \cdot \nabla_xf, \varphi_0 \rangle + \frac{1}{|\D|} \int_{\Gamma}(n_x \cdot v) f_\Gamma \varphi_0 \ d\sigma\right| \\
        \lesssim &
        \|\varphi_0\|_{L^{2}_xH^1_v(\D)} \|v \cdot \nabla_xf \|_{L^{2}_x H^{-1}_v(\D)} \\
        & \hspace{3cm} + \|\varphi_0\|_{L^{2}(\Gamma_-, |n_x \cdot v|)} \|f_\Gamma\|_{L^{2}(\Gamma_-, |n_x \cdot v|)} \\
        \lesssim& \|v \cdot \nabla_xf \|_{L^{2}_x H^{-1}_v(\D)} + \|f_\Gamma\|_{L^{2}(\Gamma_-, |n_x \cdot v|)}
        ,
    \end{aligned}
    \end{equation}
    where we used that $\varphi_0$ vanishes on $\Gamma_+$. For the second term in \eqref{eq:zz_poincare_terms}, we use that $v \cdot \nabla_x \varphi_0 \in L^2(\D)$ to get
    \begin{align*}
        \left|\frac{1}{|\D|} \int_{\D} (f-\langle f \rangle_{\D}) (v \cdot \nabla_{x} \varphi_0) \ dz \right| 
        & \leq \frac{1}{|\D|} \|f-\langle f \rangle_{\D}\|_{L^{2}(\D)}\|v \cdot \nabla_{x} \varphi_0\|_{L^{2}(\D)} \\
        & \lesssim \|f-\langle f \rangle_{\D}\|_{L^{2}(\D)}.
    \end{align*}
    This concludes the proof of \eqref{eq:zz_mean_poincare}.
\end{proof}

\begin{remark}\label{rm:differences_poincare}
    As anticipated in Section \ref{sec:comm_poincare}, the proof of Theorem \ref{th:poincare1} is modelled on the proof of \cite[Theorem 1.3]{Albritton2024VariationalEquation} for the space $\HypO(\D)$ introduced in \eqref{eq:trace0_armstrong}. Because of the definition of $\HypO(\D)$ itself, in the proof of \cite[Theorem 1.3]{Albritton2024VariationalEquation}, the authors can work with smooth, compactly supported functions, and run the computations in \eqref{eq:zz_ibp} (cf. \cite[Equation (3-9)]{Albritton2024VariationalEquation}) using integration by parts%
    \footnote{ They claim to rely on $f$ being smooth also in \eqref{eq:zz_ibp_false} (cf. the second equation after \cite[Equation (3-5)]{Albritton2024VariationalEquation}), but the distributional definition of $v\cdot \nabla_xf$ for test functions in $C^1_c(\D)$ is enough to justify that step, and no smoothness is required.
    }%
    . On the contrary, here we do not rely on any density argument, and integration by parts against $\varphi_0$ is ensured by the definition of the function space $\Hyptr(\D)$ itself.
\end{remark}

\section{Renormalisation and comparison principle}
\label{sec:renorm}

\subsection{Renormalisation formula}

Renormalisation is the key tool for energy estimates. We adapt a result of Zhu \cite[Lemma 2.5]{Zhu2025RegularityDomains} and Avelin and Hou \cite[Lemma 5.3]{Avelin2026WeakDomains}.

\begin{lemma}[Renormalisation formula]\label{th:normalisation_kolm}
    Let $c\in\R$, and consider a renormalising function
    $\chi \in C^{0,1}(\R)$ such that $\chi'\in W^{1,\infty}(\R)$. 

    \begin{enumerate}[label=(\roman*)]
    \item \label{item:renorm1}
    Let $\Omega$ be a {$C^{1,\alpha_V}$} bounded domain, and let $f$ be a weak solution to \eqref{eq:kolm_both} in the interior of $\D$. 
    Then, for every test function $\phi\in C^1_c(\overline{\D})$
    it holds
    \begin{multline}
        \langle S , \chi'(f)\phi \rangle
        = \int_{\D} \left[{\nabla}_v\chi(f) \cdot {\nabla}_v\phi +  \chi''(f) |{\nabla}_vf|^2 \phi - \chi(f)(v\cdot \nabla_x \phi) + cf\chi'(f)\phi \right] dz \\
        + \int_{\Gamma} (n_x \cdot v) \chi(\tr_xf) \phi \, d\sigma.
    \label{eq:normalisation_kolm1}
    \end{multline}
    In particular, if $f\in\Hyptr(\D)$, then $f$ is a solution up to the boundary with trace $\tr_xf$.
    \item \label{item:renorm1_infty}
    Let $\Omega$ be a bounded Lipschitz domain, and let $f \in L^\infty(\D)$ be a weak solution to \eqref{eq:kolm_both} in the interior of $\D$. Then there exists $f_\Gamma\in L^\infty(\Gamma)$ such that $f$ is a solution up to the boundary with trace $f_\Gamma$. Moreover, formula \eqref{eq:normalisation_kolm1} holds and
    \begin{equation}\label{eq:trace_energy0}
        \|f_\Gamma\|_{L^\infty(\Gamma)} \leq \|f\|_{L^\infty(\D)}.
    \end{equation}
    \item \label{item:renorm2}
    Let $\Omega$ be a {$C^{1,\alpha_V}$} bounded domain (or $\torus$), and let $f$ be a weak solution to Problem \ref{prob:refl}{} (or \ref{prob:torus}{}). Then, for every test function $\phi\in \Crefl$ (or $\varphi \in C^1_c(\D)$)
    it holds
    \begin{equation}
        \langle S , \chi'(f)\phi \rangle
        = \int_{\D} \left[{\nabla}_v\chi(f) \cdot {\nabla}_v\phi +  \chi''(f) |{\nabla}_vf|^2 \phi - \chi(f)(v\cdot \nabla_x \phi) + cf\chi'(f)\phi \right] dz.
    \label{eq:normalisation_kolm2}
    \end{equation}
    \end{enumerate}
\end{lemma}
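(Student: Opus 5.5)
The plan is to prove the renormalisation formula first for smooth approximations of $f$, and then pass to the limit. The convergence of the boundary term is controlled by the sub-optimal trace operator $\tr_x$ with weight $\omega_2$ (Propositions \ref{th:trace_2} and \ref{th:trace_p}). For \ref{item:renorm1}, I would regularise $f$ by a sequence $f_n\in C^\infty(\overline{\D})$ with $f_n\to f$ in $\Hyp(\D)$; the smooth approximation results cited in the footnote give such a sequence, via interior mollification in $x$ combined with a localisation and a slight inward translation near $\partial\Omega$. Writing $S_n \coloneqq cf_n+v\cdot\nabla_xf_n-\A_vf_n$, we have $S_n\to S$ in $L^2_xH^{-1}_v(\D)$, because $v\cdot\nabla_xf_n\to v\cdot\nabla_xf$ and $\A_v f_n\to \A_v f$ in that space.

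For smooth $f_n$ and $\phi\in C^1_c(\overline{\D})$, I test the equation against $\chi'(f_n)\phi$. In the diffusion term, $\nabla_v(\chi'(f_n)\phi)=\chi''(f_n)\phi\nabla_vf_n+\chi'(f_n)\nabla_v\phi$ produces the $\chi''|\nabla_vf_n|^2\phi$ and $\nabla_v\chi(f_n)\cdot\nabla_v\phi$ terms. In the transport term, $\chi'(f_n)v\cdot\nabla_xf_n=v\cdot\nabla_x\chi(f_n)$, and the classical divergence theorem on the Lipschitz domain yields $-\int\chi(f_n)v\cdot\nabla_x\phi+\int_\Gamma(n_x\cdot v)\chi(f_n)\phi$. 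This gives \eqref{eq:normalisation_kolm1} for $(f_n,S_n)$.

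Next I pass to the limit term by term.
\begin{itemize}
\item \emph{Source term.} Since $\chi'\in W^{1,\infty}$, we have $\chi'(f_n)\phi\to\chi'(f)\phi$ in $L^2_xH^1_v$ after passing to a subsequence converging a.e.; this is needed because $\nabla_v(\chi'(f_n)\phi)$ contains $\chi''(f_n)\nabla_vf_n$, handled by dominated convergence. Hence $\langle S_n,\chi'(f_n)\phi\rangle\to\langle S,\chi'(f)\phi\rangle$.
\item \emph{Quadratic term.} $\int\chi''(f_n)|\nabla_vf_n|^2\phi$ converges by strong $L^2$ convergence of $\nabla_vf_n$ and a.e. convergence with bounded $\chi''$.
\item \emph{Interior transport term.} For $V=\Rg$ the factor $v$ is unbounded, but $\phi$ has compact support in $\overline{\D}$, so this term is harmless.
\item \emph{Boundary term.} This is the main obstacle. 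Boundedness of $\tr_x\colon\Hyp(\D)\to L^2(\Gamma,\omega_2)$ gives $\tr_xf_n\to\tr_xf$ in $L^2(\Gamma,\omega_2)$. Since $\chi$ is Lipschitz, $|\chi(a)-\chi(b)|\le L|a-b|$. I need $\int_\Gamma|n_x\cdot v||\tr_xf_n-\tr_xf||\phi|\to0$, and by Cauchy–Schwarz,
\[
\int|n_x\cdot v|\,|g|\,|\phi|\le\Big(\int\omega_2|g|^2\Big)^{1/2}\Big(\int\frac{|n_x\cdot v|^2}{\omega_2}|\phi|^2\Big)^{1/2},
\]
where $|n_x\cdot v|^2/\omega_2\le\max(1,|n_x\cdot v|)$ is locally bounded.
\end{itemize}
This step is exactly where the $C^{1,\alpha_V}$ assumption enters. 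It also shows that $\chi(\tr_xf)$ has a meaning in the boundary integral even though $\tr_xf$ need not lie in $L^2(\Gamma,|n_x\cdot v|)$. The final claim of \ref{item:renorm1} is the case $\chi(s)=s$ combined with \eqref{eq:green} and uniqueness of the trace in $\Hyptr(\D)$.

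For \ref{item:renorm1_infty}, $\Omega$ is only Lipschitz, so $\tr_x$ is unavailable. Instead, I would approximate in the interior by cutting off near $\partial\Omega$ with $\phi\,\eta_\varepsilon(x)$, where $\eta_\varepsilon\uparrow1$, to get the interior identity. The boundary term is then defined as the limit of $-\int\chi(f)\phi\,v\cdot\nabla_x\eta_\varepsilon$, which is a bounded linear functional of $\phi$ because all other terms are controlled. With $\chi(s)=s$ this functional is bounded by $\|f\|_\infty\int_\Gamma|n_x\cdot v||\phi|$. That bound identifies an $f_\Gamma\in L^\infty(\Gamma)$ with $\|f_\Gamma\|_\infty\le\|f\|_\infty$, by Radon–Nikodym in $L^1(\Gamma,|n_x\cdot v|)'$. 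Identifying the renormalised boundary limit with $\chi(f_\Gamma)$ is delicate. My first attempt would be to use weak-$*$ limits of $\chi(f)$ restricted to thin boundary layers along characteristics. Alternatively, I would reduce to \ref{item:renorm1} by approximating $\Omega$ from inside by smooth domains and using uniform $L^\infty$ bounds on the traces.

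For \ref{item:renorm2}, I would repeat the argument of \ref{item:renorm1} with $\phi\in\Crefl$. By \eqref{eq:hyprefl2} the trace satisfies $\tr_xf=\Bo_\Ro\tr_xf$ on $\Gamma_-$, so $\chi(\tr_xf)\phi$ is reflection-symmetric. Under the change of variables $v\mapsto R_xv$, the contributions of $\Gamma_+$ and $\Gamma_-$ to $\int_\Gamma(n_x\cdot v)\chi(\tr_xf)\phi$ cancel, and this yields \eqref{eq:normalisation_kolm2}. On $\torus$ there is no boundary, and the argument reduces to mollification in $x$ together with a commutator estimate.
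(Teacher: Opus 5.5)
Your items (i) and (iii) follow the paper's route. You regularise $f$, with global density of smooth functions in $\Hyp(\D)$ replacing the paper's chart-wise translated mollification, and test with $\chi'(f_n)\phi$. You close the boundary term via $\tr_x f_n\to\tr_x f$ in $L^2(\Gamma,\omega_2)$ and Cauchy--Schwarz with $|n_x\cdot v|^2/\omega_2\le\max\{1,|n_x\cdot v|\}$, and (iii) is the reflection cancellation of Step 4. One small correction: $\chi''$ is only in $L^\infty$ and may jump (e.g.\ for the $\chi$ in \eqref{eq:approx_t+}), so $\chi''(f_n)\to\chi''(f)$ a.e.\ is not automatic. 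It is cleaner to prove the formula for $\chi\in C^2$ with $\chi',\chi''$ bounded, and then approximate $\chi$ using that $\nabla_v f=0$ a.e.\ on $f^{-1}(N)$ for every null set $N\subseteq\R$.

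The genuine gap is in item (ii), at exactly the step you flag as delicate. Your cutoff/duality argument is a fine way to produce $f_\Gamma\in L^\infty(\Gamma)$ with \eqref{eq:trace_energy0}, provided the cutoffs are adapted to the Lipschitz charts, e.g.\ $\theta((x_d-\gamma(x'))/\varepsilon)$, so that the layer term is really bounded by $\|f\|_{L^\infty(\D)}\int_\Gamma|n_x\cdot v||\phi|\,d\sigma$. Run with $\chi(f)$, however, it only yields some $h_\chi\in L^\infty(\Gamma)$, the weak-$*$ limit of layer averages of $\chi(f)$, whereas $f_\Gamma$ is the weak-$*$ limit of layer averages of $f$. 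The identity $h_\chi=\chi(f_\Gamma)$ holds only if these boundary values do not oscillate; for convex $\chi$, Jensen gives merely $h_\chi\ge\chi(f_\Gamma)$. Neither of your suggestions supplies this. Weak-$*$ limits do not commute with $\chi$. Exhausting $\Omega$ by smooth $\Omega_k$ and applying (i) gives traces on $\partial\Omega_k$ that are uniformly bounded but again converge only weakly-$*$, so the same identification problem reappears as $k\to\infty$.

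What is missing is a strong-convergence estimate for the boundary values, which the paper obtains in Step 3.
\begin{itemize}
\item Subtract the regularised identities \eqref{eq:weak_ep} for $f_\varepsilon$ and $f_\eta$, and test with $(n^\delta(x)\cdot v)\eta_R(v)(f_\varepsilon-f_\eta)$, where $n^\delta$ is a smooth field with $n^\delta\to n_x$ a.e.\ on $\partial\Omega$.
\item Integrating the transport term by parts leaves $\tfrac12\int_\Gamma(n_x\cdot v)(n^\delta\cdot v)\eta_R(f_\varepsilon-f_\eta)^2\,d\sigma$, plus interior terms that vanish as $\varepsilon,\eta\to0$ for fixed $\delta,R$. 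This is \eqref{eq:critical_step}.
\item The error $(n^\delta-n_x)\cdot v\,(n_x\cdot v)(f_\varepsilon-f_\eta)^2$ is small uniformly in $\varepsilon,\eta$ as $\delta\to0$, precisely because $\|f_\varepsilon\|_{L^\infty}\le\|f\|_{L^\infty(\D)}$. Hence $(n_x\cdot v)f_\varepsilon$ is Cauchy in $L^2_{\loc}(\Gamma)$.
\item Consequently $f_\varepsilon\to f_\Gamma$ a.e.\ off $\Gamma_0$ along a subsequence, and $\chi(f_\varepsilon)\to\chi(f_\Gamma)$ boundedly. The boundary term in \eqref{eq:limit_normalisation} then passes to the limit.
\end{itemize}
This is where the $L^\infty$ hypothesis does its real work: it compensates for the lack of a regular normal field on a Lipschitz domain, rather than merely bounding $f_\Gamma$.
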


\begin{proof}[Proof of Lemma \ref{th:normalisation_kolm}]
    Let us prove  \ref{item:renorm1} first. Let $\{U_i\}_{i=0}^n$ be an open cover for $\overline{\Omega} \subseteq \R^d$ satisfying
    \begin{gather*}
        \overline{U_0} \subseteq \Omega
        \qquad \text{and} \qquad
        \partial\Omega \subseteq \bigcup_{i=1}^n U_i .
    \end{gather*}
    Equation \eqref{eq:normalisation_kolm1} is linear in $\phi$: we shall prove it for $\phi\in C^1_c(\overline{\D}\cap (U_i \times {V}))$ for $i=0,\dots,n$, and finally sum all contributions by partition of unity to get the claim for $\phi\in C^1_c(\overline{\D})$.

    \emph{Step 1.}
    Let us first work on 
    $$\D_0 \coloneqq \D \cap (U_0\times {V}) = U_0\times {V}.$$
    Consider a standard mollifier sequence $\{\rho_\varepsilon\}_{\varepsilon>0} \subseteq C^\infty_c(\R^d)$, with $\supp \rho_\varepsilon \subseteq B_\varepsilon(0)$, $\rho_\varepsilon (x) = \varepsilon^{-d}\rho_1\left(\frac{x}{\varepsilon} \right)$. Given a function $h\in L^2(\D_0)$, for fixed $v\in{V}$ consider the mollification $h_\varepsilon(\cdot,v) \coloneqq h(\cdot,v) * \rho_\varepsilon$, which is well defined on $\overline{\D_0}$ for $\varepsilon$ sufficiently small. Also, by standard definition, $\langle S_\varepsilon , \varphi \rangle \coloneqq \langle S, \varphi_\varepsilon  \rangle$ for $\varphi\in C^1_c(\D_0)$. In particular, we have 
    \begin{gather*}
        f_\varepsilon \to f,\  {\nabla}_vf_\varepsilon \to {\nabla}_vf, \ ({\nabla}_vf)_\varepsilon \to {\nabla}_vf \quad \text{in} \ L^2(D_0), \\
        S_\varepsilon \to S \quad \text{in} \ L^2_xH_v^{-1}(\D_0).
    \end{gather*}
    Now, for every $y\in U_0$ and $\varphi\in C^1_c(\D_0)$, we pick the test function $\varphi(y,v) \rho_\varepsilon(y-x)$ and integrate \eqref{eq:kolm_rad_weak_interior} in $x\in U_0$:
    \begin{align*}
        \langle S_\varepsilon , \varphi \rangle 
        =& \int_{\D_0} \left[({\nabla}_vf)_\varepsilon(y,v) \cdot {\nabla}_v\varphi(y,v) + c f_\varepsilon(y,v) \varphi(y,v) \right] dy\,dv\\
        &- \int_{U_0\times U_0 \times {V}} \varphi(y,v)f(x,v)(v \cdot \nabla_x)\rho_\varepsilon(y-x) \,dx\,dy\,dv.
    \end{align*}
    Integration by parts shows that
    \begin{align*}
        - \int_{\D_0} f_\varepsilon(y,v)(v\cdot &\nabla_y \varphi(y,v)) \,dy\,dv \\
        &= \int_{U_0\times U_0 \times {V}} \varphi(y,v)f(x,v)(v \cdot \nabla_y)\rho_\varepsilon(y-x) \,dx\,dy\,dv\\
        &= - \int_{U_0\times U_0 \times {V}} \varphi(y,v)f(x,v)(v \cdot \nabla_x)\rho_\varepsilon(y-x) \,dx\,dy\,dv.
    \end{align*}
    Therefore, there exist functions
    $R_{\varepsilon}\in L^2(\D_0)$ such that $R_{\varepsilon} \to 0$ in $L^2(\D_0)$ as $\varepsilon\to0$, and
    \begin{equation*}
        \langle S_\varepsilon , \varphi \rangle 
        = \int_{\D_0} \left[({\nabla}_vf_\varepsilon + R_{\varepsilon})\cdot {\nabla}_v\varphi - f_\varepsilon(v\cdot \nabla_x \varphi) + c f_\varepsilon \varphi \right] dz.
    \end{equation*}
    Now, for any $\phi\in C^1_c(\D_0)$, consider the test function $\varphi \coloneqq \chi'(f_\varepsilon)\phi$. Passing to the limit as $\varepsilon \to 0$, we find the renormalisation formula on $\D_0$:
    \begin{equation*}
        \langle S , \chi'(f)\phi \rangle 
        = \int_{\D_0} \left[{\nabla}_v\chi(f) \cdot {\nabla}_v\phi +  \chi''(f) |{\nabla}_vf|^2 \phi - \chi(f)(v\cdot \nabla_x \phi) + cf\chi'(f)\phi \right] dz.
    \end{equation*}

    \emph{Step 2.}
    Let us now investigate one of the boundary charts, say 
    $$\D_1 \coloneqq \D \cap (U_1\times {V})
    \quad \text{with} \quad
    \Gamma_1 \coloneqq \Gamma \cap (U_1 \times {V}).$$ Without loss of generality, suppose that the boundary $\partial\Omega \cap U_1$ is the graph of a {$C^{1,\alpha_V}$} function along the $x_d$-axis, and $\Omega \cap U_1$ is its epigraph. Then there exists $\lambda>0$ such that $B_\varepsilon(x+\lambda\varepsilon e_d) \subseteq \Omega$ for every $x\in \Omega\cap U_1$ and $\varepsilon>0$ sufficiently small. Given a function $h\in L^2(\D_1)$, for fixed $v\in{V}$ consider the translated mollification in $x$ given by $h_\varepsilon(x,v) \coloneqq (h(\cdot,v) * \rho_\varepsilon)(x+\lambda\varepsilon e_d)$, which is well defined on $\overline{\D_1}$ for $\varepsilon>0$ sufficiently small. Analogously as before, define $S_\varepsilon$. For every $\varphi\in C^1_c(\D_1 \cup \Gamma_1)$, we pick the test function $\varphi(y,v) \rho_\varepsilon(y-x+\lambda\varepsilon e_d)$ and, by the same steps as before, we show that there exist functions
    $P_{\varepsilon}\in L^2(\D_1)$ such that $P_{\varepsilon} \to 0$ in $L^2(\D_1)$ as $\varepsilon\to0$, and such that
    \begin{equation}\label{eq:weak_ep}
        \langle S_\varepsilon , \varphi \rangle 
        = \int_{\D_1} \left[({\nabla}_vf_\varepsilon + P_{\varepsilon})\cdot {\nabla}_v\varphi - f_\varepsilon(v\cdot \nabla_x \varphi) + c f_\varepsilon \varphi \right] dz
        + \int_{\Gamma_1} (n_x \cdot v) f_\varepsilon \varphi \,d\sigma.
    \end{equation}
    Notice that $\varphi(y,v) \rho_\varepsilon(y-x+\lambda\varepsilon e_n)$ vanishes at $\Gamma_1$ for $\varepsilon$ small enough, and therefore it is a suitable test function for the formulation \eqref{eq:kolm_rad_weak_interior} in the interior of $\D$. Again, for every $\phi\in C^1_c(\D_1 \cup \Gamma_1)$, we choose $\varphi \coloneqq \chi'(f_\varepsilon)\phi$ and get
    \begin{align}
        \langle S , \chi'(f)\phi \rangle
        =& \int_{\D_1} \left[{\nabla}_v\chi(f) \cdot {\nabla}_v\phi +  \chi''(f) |{\nabla}_vf|^2 \phi - \chi(f)(v\cdot \nabla_x \phi) + cf\chi'(f)\phi \right] \,dz \notag \\
        &+ \lim_{\varepsilon\to 0} \int_{\Gamma_1} (n_x \cdot v) \chi(f_\varepsilon) \phi \,d\sigma.
        \label{eq:limit_normalisation}
    \end{align}
    By Propositions \ref{th:trace_2} and \ref{th:trace_p}, it holds $f_\varepsilon \to \tr_xf$ strongly in $L^2(\Gamma_1, \omega_2)$. Therefore, by Cauchy-Schwarz, we obtain
    \begin{multline}\label{eq:renorm_conclusion}
        \left|\int_{\Gamma_1} (v \cdot n_x) (\chi(f_\varepsilon)-\chi(\tr_x f)) \phi \,d\sigma \right| \\
        \lesssim 
        \left(\int_{\Gamma_1} \omega_2(x,v) |f_\varepsilon-\tr_x f|^2 \,d\sigma \right)^{\frac{1}{2}}\left(\int_{\Gamma_1} \frac{|v \cdot n_x|^2}{\omega_2(x,v)}|\phi|^{2}\,d\sigma \right)^{\frac{1}{2}} 
        \longrightarrow 0 .
    \end{multline}
   This concludes the proof of \eqref{eq:normalisation_kolm1}.

    \emph{Step 3.}
    Let us now consider \ref{item:renorm1_infty}. Let $\Omega^\delta$ be a smooth approximation of $\Omega$ for $\delta \to 0$, such that $\Omega \subseteq \Omega^\delta$, and define the smooth vector field $$n^\delta(x) \coloneqq - \frac{\nabla_x\dist(x,\partial\Omega^\delta) }{|\nabla_x\dist(x,\partial\Omega^\delta)|}h_\delta(x), \qquad x\in\overline{\Omega}\subseteq \overline{\Omega^\delta},$$ where $h_\delta$ is a cut-off function in a suitably small neighbourhood of $\partial\Omega^\delta$. In particular, $n^\delta(x) \to n_x$ for a.e. $x\in \partial\Omega$ (see, e.g., \cite[Theorem 1.12]{Verchota1984LayerDomains}).
    
    The proof works as in Steps 1 and 2 up to equation \eqref{eq:limit_normalisation}.
    To conclude, we will prove that we can pass the limit in the boundary term. For $\varepsilon,\eta>0$, subtract the corresponding equations \eqref{eq:weak_ep} for $f_\varepsilon$ and $f_\eta$ respectively, and use the test functions $\varphi(x,v) \coloneqq (n^\delta(x) \cdot v) {\eta_R(v)}(f_\varepsilon-f_\eta)(x,v)$, {where $\eta_R\in C_c^\infty(B_{2R}(0))$, $R>1$, is a cutoff function satisfying $\eta_R=1$ on $B_R(0)$%
    \footnote{
    Notice that for $V=\Sd$ the cutoff can be omitted.
    }
    }. {For any fixed $\delta$ and $R$,} passing to the limit as $\varepsilon,\eta \to 0$ we obtain
    \begin{equation}\label{eq:critical_step}
        \lim_{\varepsilon,\eta\to 0} \int_{\Gamma_1} \left[(n_x \cdot v)^2 + (n^\delta(x)-n_x)\cdot v (n_x \cdot v)\right](f_\varepsilon-f_\eta)^2 {\eta_R}\,d\sigma =0.
    \end{equation}
    The term $(n^\delta(x)-n_x)\cdot v (n_x \cdot v)$ is uniformly bounded for $|v| \leq 2R$ and vanishes almost everywhere. Moreover, by the smoothness of $f_\varepsilon$ and Young's inequality, we have 
    \begin{equation}\label{eq:zz_weak*1}
        \|f_{\varepsilon}\|_{L^{\infty}(\Gamma)} 
        \leq\|f_{\varepsilon}\|_{L^{\infty}(\D)} \leq\|f\|_{L^{\infty}(\D)}.
    \end{equation}
    Then, the term $(f_\varepsilon-f_\eta)^2$ is also uniformly bounded. Thus we can take the limit for $\delta\to 0$ and obtain that $(n_x\cdot v)f_\varepsilon$ strongly converges to some function $F$ in $L^2_{\loc}(\Gamma_1)$. 
    Moreover, up to extracting a subsequence in $\varepsilon$, equation \eqref{eq:zz_weak*1} shows there exists a function $f_{\Gamma} \in L^{\infty}(\Gamma)$ such that
    \begin{equation}\label{eq:zz_weak*2}
        f_{\varepsilon} \xrightharpoonup{*} f_{\Gamma} \quad \text{in} \ L^{\infty}(\Gamma), 
        \qquad
        \|f_{\Gamma}\|_{L^{\infty}(\Gamma)}
        \leq \|f\|_{L^{\infty}(\D)} .
    \end{equation}Taking the limit for $\varepsilon\to 0$ in \eqref{eq:weak_ep}, we also get that 
    $$\int_{\Gamma_1} (n_x \cdot v) f_{\Gamma} \phi = \int_{\Gamma_1} F \phi \qquad \text{for every} \quad \phi\in C^1_c(\overline{\D}\cap (U_1 \times {V})),$$ 
    which yields $F=(n_x \cdot v) f_{\Gamma}$. This is enough to pass the limit in \eqref{eq:limit_normalisation}.

    \emph{Step 4.}
    Consider now \ref{item:renorm2}. For Problem \ref{prob:torus}{},  the proof works as in Step 1. For Problem \ref{prob:refl}{}, since in particular $f$ is a solution in the interior of $\D$, formula \eqref{eq:normalisation_kolm1} holds. In particular, the boundary term vanishes for $\phi\in \Crefl$ thanks to \eqref{eq:hyprefl2}, proving \eqref{eq:normalisation_kolm2}.    
\end{proof}

\begin{remark}
    The regularity assumption on $\Omega$ for items \ref{item:renorm1} and \ref{item:renorm2} in the renormalisation lemma is used to carry the limits of the boundary terms. If one assumes only $\Omega$ Lipschitz as in \ref{item:renorm1_infty}, some other strategy must be adopted. In particular, exchanging the limits $\delta \to 0$ and $\varepsilon, \eta \to 0$ in \eqref{eq:critical_step} requires some uniform boundedness. Notice that assuming $f$ solution up to the boundary for some trace $f_\Gamma$ (that is, having a trace already instead of obtaining a candidate from the weak-$*$ limit \eqref{eq:zz_weak*2}) does not help in carrying out the limit, as it does not ensure the required uniform boundedness in \eqref{eq:critical_step}.
\end{remark}

\begin{remark}\label{rm:regularity_1/4}
    Inequality \eqref{eq:renorm_conclusion} shows that, for $V=\Sd$, assuming that $\Omega$ is $C^{1,\alpha}$ for $\alpha>1/4$ would suffice. Indeed, by Proposition \ref{th:trace_p}, in this case the trace operator with weight $\omega_p$ is bounded for $p=(1/\alpha)-1<3$. The right-hand side of \eqref{eq:renorm_conclusion} then would be 
    \[
    \left(\int_{\Gamma_1} \omega_p(x,v) |f_\varepsilon-f_{\Gamma}|^2 \,d\sigma \right)^{\frac{1}{2}}\left(\int_{\Gamma_1} \frac{|v \cdot n_x|^2}{\omega_p(x,v)}|\phi|^{2}\,d\sigma \right)^{\frac{1}{2}} ,
    \]
    where the second term is indeed bounded for $p<3$.
\end{remark}

\subsection{Comparison principle}

The renormalisation formula yields a comparison principle: it holds for $c\geq 0$ for solutions up to the boundary, and for $c>0$ for solutions to Problem \ref{prob:refl}{} or \ref{prob:torus}{}. 
In this section, for $c \in\R$ denote
\begin{equation*}
    \Lo_c  \coloneqq - {\A_v} + v \cdot \nabla_x + c.
\end{equation*}
{In this section, whenever $\Omega$ is a bounded domain we always assume $C^{1,\alpha_V}$ regularity, so that the renormalisation formulas apply.}

\begin{proposition}[Comparison principle]\label{th:comparison}
    Let $f$ be a weak solution to \eqref{eq:kolm_both} in the interior of $\D$.
    \begin{enumerate}[label=(\roman*)]
        \item \label{item:bound_on_boundary} 
        If $f\leq 0$ in $\D$, then $\tr_xf \leq 0$ on $\Gamma$.
        \item \label{item:boundary_gives_bound_AG} For $c\geq0$, if $\Lo_c f \leq 0$ in $\D$ and $\tr_xf \leq0$ on $\Gamma_-$, then $f \leq 0$ in $\D$.
    \end{enumerate}
    Let $f$ be a solution to Problem \ref{prob:refl}{} or \ref{prob:torus}{}.
    \begin{enumerate}[label=(\roman*), resume]
        \item \label{item:boundary_gives_bound_BC} For $c>0$, if $\Lo_c f \leq 0$ in $\D$, then $f \leq 0$ in $\D$.
    \end{enumerate}
\end{proposition}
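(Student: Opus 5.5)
All three items will follow from the renormalisation formula (Lemma \ref{th:normalisation_kolm}) with a convex renormalisation that isolates the positive part. I will use $\chi(s)=\frac12 (s_+)^2$, or a $C^{1,1}$ regularisation of it; this has $\chi'=s_+\in W^{1,\infty}_{\loc}$ and $\chi''=\mathbf 1_{s>0}\ge0$. Since $\chi'$ is only locally Lipschitz, I first apply the lemma with a truncation $\chi_M$ that is quadratic on $[0,M]$ and linear beyond, and then let $M\to\infty$ by monotone convergence. Throughout, "$\Lo_c f\le 0$" is read weakly: $\langle S,\psi\rangle\le 0$ for every nonnegative $\psi\in L^2_xH^1_v(\D)$. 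This applies to $\psi=\chi'(f)\phi$ with $\phi\ge0$, which lies in $L^2_xH^1_v$ because $\chi'$ is Lipschitz.

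\emph{Item \ref{item:bound_on_boundary}.} Assume $f\le0$ in $\D$ and apply \eqref{eq:normalisation_kolm1} with $\chi(s)=s_+$, suitably smoothed as $\chi_\delta$ vanishing on $(-\infty,0]$. Every interior term then vanishes, since $\chi_\delta(f)=\chi_\delta'(f)=0$. What remains is $\int_\Gamma (n_x\cdot v)\chi_\delta(\tr_xf)\phi\,d\sigma=0$ for all $\phi\in C^1_c(\overline{\D})$, so $\chi_\delta(\tr_x f)=0$ a.e. on $\Gamma\setminus\Gamma_0$. The set $\Gamma_0$ is null, so $\tr_xf\le0$.

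\emph{Item \ref{item:boundary_gives_bound_AG}.} Take $\phi\equiv1$; with the Gaussian $V$ I use cutoffs $\eta_R(v)$ and let $R\to\infty$. Formula \eqref{eq:normalisation_kolm1} gives
\[
0\ge\langle S,f_+\rangle=\int_\D\bigl[|\nabla_v f_+|^2+c f_+^2\bigr]dz+\tfrac12\int_\Gamma (n_x\cdot v)(\tr_xf)_+^2\,d\sigma .
\]
On $\Gamma_-$ we have $(\tr_xf)_+=0$ by assumption, so the boundary term is a nonnegative outflow flux. Hence $\nabla_v f_+=0$, and $cf_+=0$.

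When $c>0$ this already gives $f_+=0$. When $c=0$ I need more. One identifies $f_+$, via the renormalised identity with general $\phi$ and $\chi(s)=s_+$ (after smoothing), as an element of $\Hyptr(\D)$ whose trace is $(\tr_xf)_+$. Its transport satisfies $\langle v\cdot\nabla_x f_+,\phi\rangle\le$ terms controlled by $\nabla_v f_+=0$; more precisely, $v\cdot\nabla_x f_+$ is a nonpositive measure-type functional. Applying Theorem \ref{th:poincare1} with inflow trace zero should then give $f_+=0$.

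The main obstacle is this $c=0$ step, because $v\cdot\nabla_xf_+$ is not controlled merely by the sign condition. As a fallback, I compare with $\tilde f=f-\varepsilon\psi$, where $\psi$ is a positive supersolution ($\Lo_0\psi\ge1$) such as $\psi(x)=e^{\lambda x\cdot e}$-type weights. For these, multiplying by $\psi^{-1}$ turns the problem into one with an effective positive damping $c_{\mathrm{eff}}>0$, to which the $c>0$ argument applies; then let $\varepsilon\to0$. This weighted change of unknown is what I would try first.

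\emph{Item \ref{item:boundary_gives_bound_BC}.} Use \eqref{eq:normalisation_kolm2}, which has no boundary term, with $\chi(s)=\frac12 s_+^2$ and $\phi\equiv1\in\Crefl$ (again with $v$-cutoffs in the Gaussian case). This yields $\int_\D|\nabla_vf_+|^2+c f_+^2\le0$, and since $c>0$ we conclude $f\le0$.
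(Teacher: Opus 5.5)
Your items (i) and (iii), and item (ii) for $c>0$, are correct and essentially the paper's argument. In (iii) take the Gaussian cut-offs radial, so that they lie in $\Crefl$. The gap is item (ii) with $c=0$, which is the only nontrivial part, and neither of your two routes closes it.

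Theorem~\ref{th:poincare1} bounds $\|f_+\|_{L^2(\D)}$ by $\|v\cdot\nabla_x f_+\|_{L^2_xH^{-1}_v(\D)}$, since the other two terms vanish. Knowing only that this transport is a nonpositive functional (it is essentially $\mathbf 1_{\{f>0\}}S$) gives no smallness. You therefore only get $\|f_+\|_{L^2(\D)}\lesssim\|S\|_{L^2_xH^{-1}_v(\D)}$, not $f_+=0$.

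The fallback fails as stated. For $\psi=\psi(x)$ one has $\A_v\psi=0$, so $\Lo_0\psi=v\cdot\nabla_x\psi$ is odd in $v$. It cannot be $\ge 1$ on a velocity space invariant under $v\mapsto -v$. Likewise the ``effective damping'' $\psi^{-1}\Lo_0\psi=\lambda\,v\cdot e$ has no sign. A $v$-dependent weight would also create a drift term $\nabla_v\log\psi\cdot\nabla_v w$ that Lemma~\ref{th:normalisation_kolm} does not cover.

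The missing idea is to exploit what you already have.
\begin{enumerate}
\item Your identity gives $\nabla_v f_+=0$. Since the outflow term is nonnegative, it also gives $(\tr_x f)_+=0$ on $\Gamma_+$, hence on all of $\Gamma$.
\item By the Poincar\'e inequality in $v$, $f_+=f_+(x)$.
\item Apply \eqref{eq:normalisation_kolm1} again with $\chi$ vanishing on $(-\infty,0]$ and an arbitrary $\phi\ge 0$. The $v$-gradient terms and the boundary term vanish, and $S\le0$, $c=0$ leave
\[
\int_\Omega \chi(f_+)\Bigl(\int_V v\cdot\nabla_x\phi\,dv\Bigr)dx\ \ge\ 0 .
\]
\item Choose $\phi(x,v)=(2R-x\cdot v)\phi_1(v)\ge0$ with $\Omega\subseteq B_R(0)$. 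Then $v\cdot\nabla_x\phi=-|v|^2\phi_1(v)$ has strictly negative velocity average, which forces $\chi(f_+)=0$, i.e.\ $f\le 0$.
\end{enumerate}
Alternatively, since $f_+$ does not depend on $v$ and $V$ is symmetric under $v\mapsto-v$, a nonpositive transport $v\cdot\nabla_x f_+$ with zero boundary trace must actually vanish. After that, your Poincar\'e route via Theorem~\ref{th:poincare1} does go through.
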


\begin{proof}
For \ref{item:bound_on_boundary}, consider
\begin{equation}\label{eq:approx_t+}
    \chi(t)= 
    \begin{cases}
        0 & \text{if} \ t \leq 0, \\ 
        t^2 & \text{if} \ 0<t<1, \\
        2t-1 & \text{if} \ t \geq 1.
    \end{cases}
\end{equation}
As $f\leq 0$, the renormalisation formula gives
\begin{equation*}
    0=\langle S , \chi'(f)\phi \rangle = \int_\Gamma (n_x \cdot v) \chi(\tr_xf) \phi \,d\sigma.
\end{equation*}
This holds for every $\phi\in C^1_c(\overline{\D})$, thus $\tr_xf\leq 0$.

For \ref{item:boundary_gives_bound_AG}, the proof is modelled on \cite[Theorem 3]{Avelin2026WeakDomains}. Consider the following real functions for $k>0$:
\begin{equation}\label{eq:zz_square_cut}
    \psi_{k}(t)= 
    \begin{cases}
        0 & \text{if} \ t \leq 0 \\ 
        t & \text{if} \ 0<t<k \\
        k & \text{if} \ t \geq k
    \end{cases}
    \qquad \text{and} \qquad
    \Psi_{k}(t)=\int_{-\infty}^{t} \psi_{k}(s) \ ds.
\end{equation}
The renormalisation formula with $\chi = \Psi_{k}$ and a non-negative radial test function $\phi=\phi{_R}(v)$ {for $R\geq 1$, with $\phi_R \in C^\infty_c(B_{2R}(0))$, $\phi_R=1$ on $B_R(0)$ and $|\nabla_v\phi_R| \lesssim R^{-1}$}, 
gives\footnote{
Again, $\phi=1$ is enough if $V=\Sd$.
}
\begin{multline}\label{eq:zz_last_points}
    \langle S , \psi_k(f) {\phi_R} \rangle
    = \int_\D \left[{\nabla_v\Psi_k(f)\cdot \nabla_v \phi_R} + \psi_k'(f) |{\nabla}_vf|^2 {\phi_R}+ c f\psi_k(f){\phi_R} \right] dz \\ + \int_\Gamma (n_x \cdot v) \Psi_k(\tr_xf) {\phi_R} \,d\sigma.
\end{multline}
Since $S\leq 0$, $\tr_xf \leq 0$ on $\Gamma_-$, and $c\geq 0$, {in the limit $R \to + \infty$} we get
\begin{equation}\label{eq:zz_last_points2}
    \int_{f^{-1}(0,k)} \left[ |{\nabla}_vf_+|^2 + cf_+^2 \right] dz
    + \int_{\Gamma_+} (n_x \cdot v) \Psi_k(\tr_xf) \,d\sigma\leq 0.
\end{equation}
Since the inequality holds for every $k>0$, it follows that $\tr_xf \leq 0$ on $\Gamma$ and ${\nabla}_vf_+=0$ in $\D$ (and also $f\leq 0$ already, if $c>0$). In particular, a standard Poincaré inequality in $v$, namely \eqref{eq:zz_stand_poincare}, yields $f_+=f_+(x)$. We apply again the renormalisation formula, this time with the renormalising function \eqref{eq:approx_t+} and a test function $\phi \geq 0$:
\begin{align*}
    0 &\geq \langle S, \chi'(f)\phi \rangle - \int_\D cf\chi'(f)\phi \,dz \\
    &= - \int_\D \chi(f)(v \cdot \nabla_x \phi) \,dz
    = - \int_\Omega \chi(f) \left(\int_{{V}}v \cdot \nabla_x \phi \,dv \right) dx. 
\end{align*}
Then, it suffices to choose $\phi \geq 0$ such that $\int_{{V}}v \cdot \nabla_x \phi < 0$ for a.e. $x\in\Omega$ to conclude $f_+=0$. For instance, take $R>0$ such that $\Omega \subseteq B_R(0)$, and consider the function $\phi(x,v) = (2R - x \cdot v){\phi_{1}(v)}$.

For \ref{item:boundary_gives_bound_BC}, the proof is the same up to equations \eqref{eq:zz_last_points} and \eqref{eq:zz_last_points2}, where the boundary term is omitted. As $c>0$, we deduce $f\leq 0$ directly.
\end{proof}

\begin{remark}
    Statements \ref{item:bound_on_boundary} and \ref{item:boundary_gives_bound_AG} hold, in particular, for solutions to Problem \ref{prob:trace}. In view of Remark \ref{rm:sol_boundary_probl}, statement \ref{item:boundary_gives_bound_BC} holds, in particular, for solutions to Problems \ref{prob:trace_Ro} and \ref{prob:trace_To}.
\end{remark}

\begin{proposition}[Weak maximum principle]\label{th:max_princ}
    Let $f$ be a weak solution to \eqref{eq:kolm_both} up to the boundary.
    \begin{enumerate}[label=(\roman*)]
        \item \label{eq:energy_A}
        If $c>0$, then
        \begin{equation*}
            \|\tr_xf\|_{L^\infty(\Gamma)} \leq \|f\|_{L^\infty(\D)}
            \lesssim_c \|S\|_{L^\infty(\D)}+\|\tr_xf\|_{L^\infty(\Gamma_-)}.
        \end{equation*}
        \item \label{eq:zz_second_max}
        If $c\geq 0$ and $S\leq 0$, then 
        \begin{equation*}
            \sup_{\Gamma_+} \tr_xf \leq \sup_{\Gamma_-} \ (\tr_xf)_{+} 
            \qquad \text{and} \qquad
            \sup_{\D} f \leq \sup_{\Gamma_-} \ (\tr_xf)_{+} .
        \end{equation*}
        In particular, if $c\geq 0$ and $S=0$, then
        \begin{equation*}
            \|\tr_xf\|_{L^\infty(\Gamma_+)}\leq \|\tr_xf\|_{L^\infty(\Gamma_-)} 
            \qquad \text{and} \qquad
            \|f\|_{L^\infty(\D)} = \|\tr_xf\|_{L^\infty(\Gamma_-)} .
        \end{equation*}
    \end{enumerate}

    Let $f$ be a weak solution to Problem \ref{prob:trace_Ro} or  \ref{prob:trace_To}.
    \begin{enumerate}[label=(\roman*), resume]
        \item \label{eq:energy_ART}
        If $c>0$, then
        \begin{equation*}
            \|\tr_xf\|_{L^\infty(\Gamma)} \leq \|f\|_{L^\infty(\D)}
            \lesssim_c \|S\|_{L^\infty(\D)}.
        \end{equation*}
    \end{enumerate}
    
    Let $f$ be a solution to Problem \ref{prob:refl}{} or \ref{prob:torus}{}.
    \begin{enumerate}[label=(\roman*), resume]
        \item \label{eq:energy_BC}
        If $c>0$, then
        \begin{equation*}
            \|f\|_{L^\infty(\D)}
            \lesssim_c \|S\|_{L^\infty(\D)}.
        \end{equation*}
    \end{enumerate}
\end{proposition}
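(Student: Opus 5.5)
The plan is to derive every item from the comparison principle (Proposition \ref{th:comparison}), applied to $f$ minus suitable explicit barriers, together with Proposition \ref{th:comparison}\ref{item:bound_on_boundary} for the trace bounds.

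\textbf{The trace bound $\|\tr_xf\|_{L^\infty(\Gamma)}\le\|f\|_{L^\infty(\D)}$ in \ref{eq:energy_A} and \ref{eq:energy_ART}.} Set $M=\|f\|_{L^\infty(\D)}$. The function $f-M$ is a weak solution in the interior with source $S-cM$, since constants satisfy $\Lo_c M=cM$. It is $\le0$ in $\D$. Hence item \ref{item:bound_on_boundary} gives $\tr_xf\le M$, and applying the same argument to $-f$ gives the lower bound. The trace of $f-M$ is $\tr_xf-M$: by the renormalisation formula, the boundary term is linear in constants, because $\int_\Gamma (n_x\cdot v)M\phi$ arises from integrating by parts against $M$.

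\textbf{The interior bound in \ref{eq:energy_A}.} Set $K=\|S\|_{L^\infty}/c+\|\tr_xf\|_{L^\infty(\Gamma_-)}$ and $g=f-K$. Then $\Lo_c g=S-cK\le0$, and $\tr_xg\le0$ on $\Gamma_-$. By item \ref{item:boundary_gives_bound_AG} with $c>0$, $g\le0$. Applying the same argument to $-f$ yields $\|f\|_\infty\le c^{-1}\|S\|_\infty+\|\tr_xf\|_{L^\infty(\Gamma_-)}$.

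\textbf{Items \ref{eq:energy_BC} and \ref{eq:energy_ART}.} For \ref{eq:energy_BC}, the same barrier $K=\|S\|_\infty/c$ works. Constants belong to the reflection and periodic classes, since $-\int_\D K\,v\cdot\nabla_x\phi=0$ for $\phi\in\Crefl$ by the symmetry of reflection. So $f-K$ solves Problem \ref{prob:refl}{} or \ref{prob:torus}{} with source $S-cK\le0$, and item \ref{item:boundary_gives_bound_BC} applies. Item \ref{eq:energy_ART} follows by combining this with the trace bound above, using Remark \ref{rm:sol_boundary_probl}.

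\textbf{Item \ref{eq:zz_second_max}.} Set $m=\sup_{\Gamma_-}(\tr_xf)_+\ge0$, assumed finite. The function $g=f-m$ satisfies $\Lo_c g=S-cm\le0$, because $c,m\ge0$ and $S\le0$, and $\tr_xg\le0$ on $\Gamma_-$. Item \ref{item:boundary_gives_bound_AG} gives $g\le0$ in $\D$, which is the interior bound. Then item \ref{item:bound_on_boundary} gives $\tr_xg\le0$ on all of $\Gamma$, in particular on $\Gamma_+$.

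For $S=0$, apply this to both $f$ and $-f$. This gives $\|f\|_\infty\le\|\tr_xf\|_{L^\infty(\Gamma_-)}$ and the $\Gamma_+$ bound. The reverse inequality comes from the trace bound $\|\tr_xf\|_{L^\infty(\Gamma)}\le\|f\|_\infty$, proved as in the first step via item \ref{item:bound_on_boundary}, which does not need $c>0$.

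\textbf{Main obstacle.} The delicate point is justifying that subtracting a constant preserves the solution notion with the correct trace, and that $\tr_x$ coincides with the trace $f_\Gamma$ (Lemma \ref{th:normalisation_kolm}\ref{item:renorm1}). In the reflection case it is likewise delicate to check that constants are admissible solutions. Both should follow directly from the weak formulations and integration by parts for smooth test functions.
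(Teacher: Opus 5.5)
Your proposal is correct. For all the interior bounds it is the paper's argument: $f$ is compared with the constant barriers $\pm\bigl(\|S\|_{L^\infty(\D)}/c+\|\tr_xf\|_{L^\infty(\Gamma_-)}\bigr)$, $\pm\|S\|_{L^\infty(\D)}/c$ and $\sup_{\Gamma_-}(\tr_xf)_+$ via Proposition~\ref{th:comparison}\ref{item:boundary_gives_bound_AG} and \ref{item:boundary_gives_bound_BC}, and Remark~\ref{rm:sol_boundary_probl} transfers \ref{eq:energy_BC} to \ref{eq:energy_ART}.

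The one genuine difference is the trace bound $\|\tr_xf\|_{L^\infty(\Gamma)}\le\|f\|_{L^\infty(\D)}$. This is the first inequality in \ref{eq:energy_A} and \ref{eq:energy_ART}, and the reverse inequality in the case $S=0$ of \ref{eq:zz_second_max}.
\begin{itemize}
\item The paper quotes \eqref{eq:trace_energy0} from Lemma~\ref{th:normalisation_kolm}\ref{item:renorm1_infty}. There an $L^\infty$ trace is built as a weak-$*$ limit of translated mollifications and then identified with the given trace by uniqueness of traces.
\item You instead apply Proposition~\ref{th:comparison}\ref{item:bound_on_boundary} to $\pm f-\|f\|_{L^\infty(\D)}$. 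This is legitimate: constants solve the equation with source $cM$, and $\tr_x$ is linear and fixes constants.
\end{itemize}

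Your route speaks directly about $\tr_xf$ and reuses the same comparison machinery throughout. However, it passes through the renormalisation formula \eqref{eq:normalisation_kolm1}, so it needs $C^{1,\alpha_V}$ regularity of $\Omega$. The paper's route needs only a Lipschitz boundary. That is what makes it applicable to Problem~\ref{prob:trace_To} on the cube $(0,1)^d$. There your step via item~\ref{item:bound_on_boundary} is not directly available, and you would need either an extra argument near the faces or a fall-back on \eqref{eq:trace_energy0}.
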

\begin{proof}
    Without loss of generality, suppose that all the terms on the right-hand side of the claims are finite. 
    To show \ref{eq:energy_A}, apply Proposition \ref{th:comparison}\ref{item:boundary_gives_bound_AG} and compare $f$ to the functions
    $$h^\pm\coloneqq \pm\left(\frac{1}{c}\|S\|_{L^\infty(\D)}+\|\tr_xf\|_{L^\infty(\Gamma_-)}\right),$$
    which solve, respectively,
    \begin{equation*}
        \Lo_c h^\pm = \pm\left(\|S\|_{L^\infty(\D)}+c\|\tr_xf\|_{L^\infty(\Gamma_-)} \right).
    \end{equation*}
    Similarly, claims \ref{eq:energy_BC} and \ref{eq:energy_ART} follow from Proposition \ref{th:comparison}\ref{item:boundary_gives_bound_BC} -- recall Remark \ref{rm:sol_boundary_probl}. Moreover, for \ref{eq:energy_A} and \ref{eq:energy_ART}, the first inequality follows from \eqref{eq:trace_energy0}; same for one side of the equality in \ref{eq:zz_second_max}.
    To show the other inequality in \ref{eq:zz_second_max}, consider $M \coloneqq \sup_{\Gamma_-} (\tr_xf)_{+} \geq 0$, which solves $\Lo_c M = cM$. Therefore $\Lo_c f = S\leq 0 \leq cM = \Lo_c M$ in $\D$, and clearly $\tr_xf \leq M$ on $\Gamma_-$. Conclude with the comparison principle.
\end{proof}

\begin{corollary}[Uniqueness]\label{th:uniqueness}
    Let $f_1$ and $f_2$ be both solutions to either
    \begin{enumerate}[label=(\roman*)]
        \item Problem \ref{prob:trace_G} for $c\geq0$, or 
        \item Problem \ref{prob:trace_Ro}, \ref{prob:trace_To}, \ref{prob:refl}{} or \ref{prob:torus}{} for $c>0$.
    \end{enumerate}
    Suppose $\Lo_c f_1=\Lo_c f_2 \in L^2_xH_v^{-1}(\D)$. Then $f_1=f_2$.
\end{corollary}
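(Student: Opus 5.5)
By linearity, the plan is to set $f \coloneqq f_1 - f_2$ and show $f=0$. Since $\Lo_c f_1 = \Lo_c f_2$, subtracting the weak formulations shows that $f$ is a weak solution of the same type of problem with zero source $S=0$.

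For the inflow problem, the difference pairs $(f_1,(f_1)_\Gamma)$ and $(f_2,(f_2)_\Gamma)$ subtract to a pair $(f, f_\Gamma)$ satisfying \eqref{eq:kolm_rad_weak} with $S=0$. The data $G$ cancels on $\Gamma_-$, so $f_\Gamma = 0$ there. For the trace problems \ref{prob:trace_Ro} and \ref{prob:trace_To}, the boundary operators are linear, so the difference again satisfies $f_\Gamma = \Bo f_\Gamma$ on $\Gamma_-$. In view of Remark \ref{rm:sol_boundary_probl}, this difference is in particular a solution to \ref{prob:refl} or \ref{prob:torus}. For \ref{prob:refl} and \ref{prob:torus} themselves, the test-function classes are linear spaces, so the difference is directly a solution with $S=0$.

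Next I would apply the comparison principle, Proposition \ref{th:comparison}, to both $f$ and $-f$, each of which satisfies $\Lo_c(\pm f)=0\le 0$.

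In case (i), with $c\ge0$, I would use item \ref{item:boundary_gives_bound_AG}. This requires $\tr_x(\pm f)\le 0$ on $\Gamma_-$, and the key point to justify is that the trace $\tr_x f$ appearing there coincides with the weak trace $f_\Gamma$. This follows from Lemma \ref{th:normalisation_kolm}\ref{item:renorm1}: $f\in\Hyptr(\D)$ is a solution up to the boundary with trace $\tr_x f$. The uniqueness of traces noted after Definition \ref{def:hypA} then gives $\tr_x f = f_\Gamma = 0$ on $\Gamma_-$. Hence $f\le0$ and $-f\le0$, so $f=0$.

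In case (ii), with $c>0$, item \ref{item:boundary_gives_bound_BC} applies directly to $\pm f$ and needs no boundary information, giving $f=0$ again.

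The main subtlety, and the step I expect to need the most care, is the identification $\tr_x f = f_\Gamma$ under the $C^{1,\alpha_V}$ assumption standing in that section. The comparison principle is phrased with $\tr_x$, while the problems are phrased with $f_\Gamma$, and the argument must pass between the two.
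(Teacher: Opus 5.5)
Your proposal is correct and matches the paper's intended argument. The corollary is stated without a separate proof as an immediate consequence of Proposition~\ref{th:comparison}, items \ref{item:boundary_gives_bound_AG} and \ref{item:boundary_gives_bound_BC}, applied to $\pm(f_1-f_2)$. The identification $f_\Gamma=\tr_x f$ under the standing $C^{1,\alpha_V}$ assumption, and the reduction of the trace problems to the trace-free ones via Remark~\ref{rm:sol_boundary_probl}, are used there exactly as in your proposal.
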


\section{Well-posedness and one-sided trace estimate}
\label{sec:well-posedness_proof}

\subsection{The inflow boundary problem} 
\label{sec:well_inflow}
We now prove Theorem \ref{th:wellpos_inflow_initial}. First, we prove the existence of a solution for $c>0$: at this stage, $\Omega$ is required to be Lipschitz only.

\begin{lemma}\label{th:existence1}
    Let $c>0$, $\Omega$ a bounded Lipschitz domain, $G\in L^2(\Gamma_-, |n_x \cdot v|)$. Then, there exists a weak solution $f$ to Problem \ktr{$(\Bo_{G})$}.
\end{lemma}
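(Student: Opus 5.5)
We will obtain the solution from the Lions--Lax--Milgram theorem (Theorem \ref{th:lions}), which, as recalled in the literature review, is the standard tool here and needs only $c>0$ and a Lipschitz domain. Take as Hilbert space
\[
\mathcal{H} \coloneqq L^2_xH^1_v(\D),
\]
and as dense pre-Hilbert space of test functions $\Phi \coloneqq C^1_c(\overline{\D})$, with the norm
\[
\|\varphi\|_\Phi^2 \coloneqq \|\varphi\|^2_{L^2_xH^1_v(\D)} + \|\varphi\|^2_{L^2(\Gamma_-,|n_x\cdot v|)} .
\]
Define the bilinear form
\[
B(f,\varphi) \coloneqq \int_\D \left[\nabla_v f\cdot\nabla_v\varphi - f\,(v\cdot\nabla_x\varphi) + c f\varphi\right] dz + \int_{\Gamma_+} (n_x\cdot v)\, f\varphi\, d\sigma ,
\]
where the last term must be read only after the boundary contribution has been rearranged. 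The convenient way to arrange this is to work, as in Lions' classical setting, with the form obtained from \eqref{eq:kolm_rad_weak} after moving the known inflow datum to the right-hand side:
\[
B(f,\varphi) = \int_\D \left[\nabla_v f\cdot\nabla_v\varphi - f\,(v\cdot\nabla_x\varphi) + c f\varphi\right] dz ,
\qquad
L(\varphi) = \langle S,\varphi\rangle + \int_{\Gamma_-} |n_x\cdot v|\, G\varphi\, d\sigma .
\]
For each fixed $\varphi\in\Phi$, the map $f\mapsto B(f,\varphi)$ is continuous on $\mathcal{H}$, because $v\cdot\nabla_x\varphi\in L^2(\D)$; for $V=\Rg$ this uses that $\varphi$ has compact support. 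The functional $L$ is bounded with respect to $\|\cdot\|_\Phi$.

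\textbf{Coercivity.} For $\varphi\in\Phi$, integration by parts (valid for $C^1$ functions on a Lipschitz domain) gives
\[
-\int_\D \varphi\,(v\cdot\nabla_x\varphi)\,dz = -\frac12\int_\Gamma (n_x\cdot v)\varphi^2\,d\sigma .
\]
Hence
\[
B(\varphi,\varphi) = \|\nabla_v\varphi\|^2_{L^2(\D)} + c\|\varphi\|^2_{L^2(\D)} + \frac12\|\varphi\|^2_{L^2(\Gamma_-,|n_x\cdot v|)} - \frac12\|\varphi\|^2_{L^2(\Gamma_+,|n_x\cdot v|)} .
\]
The outflow term has the wrong sign. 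To fix this we either restrict $\Phi$ to test functions vanishing on $\Gamma_+$, or, more naturally, take $\Phi=\{\varphi\in C^1_c(\overline{\D}) : \varphi=0 \text{ on } \Gamma_+\cup\Gamma_0\}$, mirroring Lemma \ref{th:lemma_poincare}. On this class, $B(\varphi,\varphi)\geq \min(1,c,\tfrac12)\|\varphi\|_\Phi^2$, and this is exactly where $c>0$ enters. Lions' theorem then gives $f\in\mathcal{H}$ with $B(f,\varphi)=L(\varphi)$ for all $\varphi\in\Phi$, and $\|f\|_{L^2_xH^1_v}\lesssim_c \|S\|+\|G\|$.

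\textbf{Producing the trace (main obstacle).} The weak formulation holds only against test functions vanishing on $\Gamma_+$. Consequently $f$ is a solution in the interior, so $f\in\Hyp(\D)$, but we still need an $f_\Gamma\in L^2(\Gamma,|n_x\cdot v|)$ that equals $G$ on $\Gamma_-$ and makes \eqref{eq:kolm_rad_weak} hold for every $\varphi\in C^1_c(\overline{\D})$. Merely Lipschitz regularity rules out invoking $\tr_x$.

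The first plan is vanishing viscosity, following Zhu and Avelin--Hou. Add $\varepsilon\Delta_x$ and impose a Robin-type condition matching the inflow on $\Gamma_-$ and a natural outflow condition on $\Gamma_+$. The regularised problem is coercive in $H^1(\D)$ for $c>0$, with energy identity
\[
\|\nabla_v f_\varepsilon\|^2+\varepsilon\|\nabla_xf_\varepsilon\|^2+c\|f_\varepsilon\|^2+\tfrac12\|f_\varepsilon\|^2_{L^2(\Gamma_+,|n_x\cdot v|)}\lesssim_c \|S\|^2+\|G\|^2_{L^2(\Gamma_-,|n_x\cdot v|)},
\]
which bounds the traces on all of $\Gamma$ uniformly in $\varepsilon$. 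Extract weak limits $f_\varepsilon\rightharpoonup f$ in $L^2_xH^1_v$ and $\Tr_xf_\varepsilon\rightharpoonup f_\Gamma$ in $L^2(\Gamma,|n_x\cdot v|)$, with $f_\Gamma=G$ on $\Gamma_-$ by the boundary condition. Then pass to the limit in the weak formulation against arbitrary $\varphi\in C^1_c(\overline{\D})$; the term $\varepsilon\int\nabla_xf_\varepsilon\cdot\nabla_x\varphi$ is $O(\sqrt\varepsilon)$, so it vanishes in the limit.

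The delicate point is choosing the regularised boundary condition so that the boundary integrals converge to $\int_\Gamma (n_x\cdot v)f_\Gamma\varphi$. We would take the boundary term $\int_\Gamma (n_x\cdot v)_+ f_\varepsilon\varphi - \int_{\Gamma_-}|n_x\cdot v| G\varphi$ in the regularised weak form, which is linear in $\Tr_x f_\varepsilon$ and therefore passes to the weak limit.
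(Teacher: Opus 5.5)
Your Lions--Lax--Milgram step is correct as far as it goes. However, by restricting to test functions vanishing on $\Gamma_+\cup\Gamma_0$ it discards exactly the outflow information you need, so the existence proof you actually give is the vanishing-viscosity argument.

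For $V=\Sd$ that argument is the paper's alternative proof and it works, with one small fix. Do not assert that the weak limit of $\Tr_x f_\varepsilon$ equals $G$ on $\Gamma_-$ ``by the boundary condition'': there the Robin condition only says $\varepsilon\, n_x\cdot\nabla_x f_\varepsilon=|n_x\cdot v|(G-\Tr_x f_\varepsilon)$. Since only $(n_x\cdot v)_+\Tr_x f_\varepsilon$ enters the regularised weak form, simply define $f_\Gamma$ as the weak limit on $\Gamma_+$ and as $G$ on $\Gamma_-$.

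The gap is the Gaussian model $V=\Rg$, which the lemma also covers. There, coercivity of the regularised form is not enough for Lax--Milgram. You must also show that $\int_\D f\,(v\cdot\nabla_x\varphi)\,dz$, $\int_\Gamma(n_x\cdot v)_+f\varphi\,d\sigma$ and $\int_{\Gamma_-}|n_x\cdot v|G\varphi\,d\sigma$ are bounded on $H^1(\D)$, even though $v$ is unbounded. Your proposal is silent on this, and the paper restricts vanishing viscosity to $V=\Sd$ precisely for this reason.

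The point can be repaired. Gaussian integration by parts, $\int\nabla_v\cdot F\,d\gamma=\int v\cdot F\,d\gamma$ with $F=vh^2$, yields $\|\,|v|h\|^2_{L^2(\gamma)}\le 2d\|h\|^2_{L^2(\gamma)}+4\|\nabla_vh\|^2_{L^2(\gamma)}$. Hence $|\int_\D f\,(v\cdot\nabla_x\varphi)\,dz|\lesssim\|f\|_{L^2_xH^1_v(\D)}\|\nabla_x\varphi\|_{L^2(\D)}$. Combined with the multiplicative trace inequality in $x$, this also gives $\int_\Gamma|v|\,|\Tr_xh|^2\,d\sigma\lesssim\|h\|^2_{H^1(\D)}$. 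But this estimate has to be part of the proof. As written, your argument establishes the lemma only for $V=\Sd$.

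The paper's main proof needs neither regularisation nor this estimate. It is the idea your first, abandoned bilinear form with an outflow term was reaching for: instead of shrinking the test class, enlarge the unknown by making the outflow trace an independent variable. Concretely:
\begin{itemize}
\item take $F=L^2_xH^1_v(\D)\times L^2(\Gamma_+,|n_x\cdot v|)$;
\item take $H=C^1_c(\overline{\D})$, embedded by $h\mapsto(h,\Tr_xh)$, with norm $\|h\|^2_{L^2(\D)}+\|\nabla_vh\|^2_{L^2(\D)}+\int_\Gamma|n_x\cdot v||\Tr_xh|^2\,d\sigma$;
\item set $E[(f,g),h]=\int_\D[\nabla_vf\cdot\nabla_vh-f(v\cdot\nabla_xh)+cfh]\,dz+\int_{\Gamma_+}(n_x\cdot v)\,g\,\Tr_xh\,d\sigma$.
\end{itemize}
The added term flips your wrong-signed outflow contribution: $E[(h,\Tr_xh),h]=\|\nabla_vh\|^2_{L^2(\D)}+c\|h\|^2_{L^2(\D)}+\tfrac12\int_\Gamma|n_x\cdot v||\Tr_xh|^2\,d\sigma$. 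So every $h\in C^1_c(\overline{\D})$ is an admissible test function. Continuity in $(f,g)$ only uses $v\cdot\nabla_xh\in L^2(\D)$ for compactly supported $h$, which holds for both velocity models. Finally, the second component of the Lions solution is directly the outflow trace: setting $f_\Gamma\coloneqq g$ on $\Gamma_+$ and $f_\Gamma\coloneqq G$ on $\Gamma_-$ gives \eqref{eq:kolm_rad_weak} on a merely Lipschitz $\Omega$.
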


We give two proofs of this fact: via the Lions-Lax-Milgram theorem, which we state below, and via the vanishing viscosity method, which recovers a solution in the limit $\varepsilon \to 0$ of the elliptic problem associated to  $\varepsilon \Delta_x + {\Delta}_v$. However, the latter applies only for $V=\Sd$.

\begin{theorem}[J.-L. Lions \cite{Lions1957SurCylindriques}]\label{th:lions}
    Let $(F, \|\cdot\|_F)$ be a Hilbert space, and let $(H,\| \cdot \|_H)$ be a pre-Hilbert space continuously embedded in $F$. Suppose $E \colon F \times H \to \R$ is a bilinear form such that:
    \begin{enumerate}[label={\bf[H\arabic*]},ref={\bf[H\arabic*]}, leftmargin=4em]
        \item \label{item:H1} for every $h\in H$, the linear form $f \mapsto E[f,h]$ is continuous on $F$;
        \item \label{item:H2} there exists $\alpha>0$ such that $|E[h,h]| \geq \alpha \|h\|^2_H$ for every $h\in H$.
    \end{enumerate}
    Then, for every linear form $L\in H^*$, there exists a vector $f\in F$ such that $E[f,-]=L$.
\end{theorem}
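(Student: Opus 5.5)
The plan is the classical argument: represent the bilinear form through the Riesz representation theorem on the Hilbert space $F$, then use the coercivity \ref{item:H2} to invert on the range. No completeness of $H$ is needed, which is why $H$ may be merely pre-Hilbert. Throughout, let $C>0$ denote the embedding constant, so that $\|h\|_F\le C\|h\|_H$ for $h\in H$, and let $(\cdot,\cdot)_F$ denote the inner product of $F$.

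\emph{Step 1: representation of $E$.} Fix $h\in H$. By \ref{item:H1}, the map $f\mapsto E[f,h]$ is a continuous linear functional on $F$. The Riesz representation theorem then gives a unique $Th\in F$ with
\begin{equation*}
E[f,h]=(f,Th)_F \qquad \text{for all } f\in F.
\end{equation*}
Uniqueness of the Riesz representative, together with bilinearity of $E$, shows that $T\colon H\to F$ is linear.

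\emph{Step 2: lower bound for $T$.} Take $f=h\in H\subseteq F$ in the identity above. By \ref{item:H2}, Cauchy--Schwarz and the embedding,
\begin{equation*}
\alpha\|h\|_H^2\le |E[h,h]|=|(h,Th)_F|\le \|h\|_F\|Th\|_F\le C\|h\|_H\|Th\|_F.
\end{equation*}
Hence $\|Th\|_F\ge (\alpha/C)\|h\|_H$, and in particular $T$ is injective.

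\emph{Step 3: transfer $L$ to the range and conclude.} On the subspace $T(H)\subseteq F$, define $\lambda(Th)\coloneqq L(h)$. This is well defined because $T$ is injective. It is also bounded in the $F$-norm, since
\begin{equation*}
|\lambda(Th)|\le \|L\|_{H^*}\|h\|_H\le \frac{C\|L\|_{H^*}}{\alpha}\,\|Th\|_F .
\end{equation*}
Extend $\lambda$ by uniform continuity to the closure $\overline{T(H)}$, and by zero on its orthogonal complement in $F$. The result is a continuous linear functional on all of $F$. A second application of the Riesz theorem gives $f\in F$ with $\lambda(g)=(f,g)_F$ for all $g\in F$. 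For every $h\in H$ this yields
\begin{equation*}
E[f,h]=(f,Th)_F=\lambda(Th)=L(h),
\end{equation*}
that is, $E[f,-]=L$.

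None of these steps is a real obstacle. The only point requiring care is Step 2: the coercivity is measured in the $H$-norm while the representation lives in $F$. The continuous embedding is exactly what links the two norms and makes $\lambda$ bounded on $T(H)$. The argument gives existence but not uniqueness of $f$ (the extension of $\lambda$ is not unique), which is consistent with the statement.
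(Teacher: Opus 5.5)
Your proof is correct. It is the classical argument behind Lions' theorem: the Riesz representation $E[f,h]=(f,Th)_F$, the lower bound $\|Th\|_F\ge(\alpha/C)\|h\|_H$ from \ref{item:H2} and the embedding, then Riesz again for the bounded functional $Th\mapsto L(h)$ extended to $F$. The paper gives no proof of its own to compare with; it simply quotes the theorem from \cite{Lions1957SurCylindriques}.

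Your closing remark on non-uniqueness is also accurate, since $f$ is determined only modulo $T(H)^{\perp}$. This is why the paper proves uniqueness separately, via the comparison principle (Corollary \ref{th:uniqueness}).
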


\begin{proof}[Proof of Lemma \ref{th:existence1}]
    Consider
    \begin{equation*}
        F \coloneqq L^2_{x}H^1_v(\D)\times L^2(\Gamma_+, |n_x\cdot v|)
    \end{equation*}
    with norm
    \begin{equation}\label{eq:norm_F}
        \|(f,g)\|^2_F \coloneqq \int_{\D} |f|^2 \,dz + \int_{\D}|{\nabla}_vf|^2 \,dz+ \int_{\Gamma_+}(n_x\cdot v)_+|g|^2 \,d\sigma,
    \end{equation}
    which is a Hilbert space, and consider
    \begin{equation*}
        H\coloneqq C^1_c(\overline{\D})
    \end{equation*}
     with norm
    \begin{equation}\label{eq:norm_H}
        \|h\|^2_H \coloneqq \int_{\D} |h|^2 \,dz + \int_{\D}|{\nabla}_vh|^2 \,dz+ \int_{\Gamma}|n_x\cdot v||\Tr_xh|^2 \,d\sigma,
    \end{equation}
    which is a pre-Hilbert space. Clearly, the embedding $h \mapsto (h, \Tr_xh)$ of $H$ in $F$ is continuous.
    Consider the bilinear form $E$ given by
    \begin{equation*}
        E[(f,g), h] \coloneqq \int_{\D} \left[{\nabla}_vf \cdot {\nabla}_vh - f(v\cdot \nabla_x h) + c f h \right] dz
        + \int_{\Gamma_+} (n_x \cdot v)_+ g \Tr_xh \,d\sigma,
    \end{equation*}
    and the continuous functional 
    $$L(h)\coloneqq \langle S, h \rangle + \int_{\Gamma_-} |n_x \cdot v| G \Tr_xh \,d\sigma.$$
    For every $h\in H$ and $(f,g)\in F$, it holds
    \begin{equation*}
        |E[(f,g),h]| \lesssim_c \left(\|h\|_{H}+\|v \cdot \nabla_x h\|_{L^2(\D)} \right) \|(f,g)\|_F,
    \end{equation*}
    which is \ref{item:H1}. Given $h\in H$, integration by parts yields
    \begin{equation*}
        \int_{\D} h(v \cdot \nabla_xh) \,dz= \frac{1}{2} \int_{\Gamma}(n_x \cdot v) |\Tr_x h|^2 \,d\sigma.
    \end{equation*}
    Therefore,
    \begin{equation}\label{eq:coercivity_E}
        E[h,h] = \int_{\D} \left[|{\nabla}_vh|^2 + c|h|^2 \right] dz+ \frac{1}{2} \int_{\Gamma}
        |n_x\cdot v||\Tr_xh|^2 \,d\sigma
        \gtrsim \|h\|_H^2,
    \end{equation}
    which gives \ref{item:H2}. Thus, there exists a pair $(f,g) \in F$ such that $E[(f,g),h]=L(h)$ for every $h\in H$. In particular, writing
    \begin{equation}\label{eq:def_f_gamma}
        f_\Gamma \coloneqq 
        \begin{cases}
            g & \text{on} \ \Gamma_+,\\
            G & \text{on} \ \Gamma_-,
        \end{cases}
    \end{equation}
    we find that $(f,f_\Gamma)$ satisfies \eqref{eq:kolm_rad_weak}. In particular, $f$ solves Problem \ktr{}($\Bo_G$). 
\end{proof}

\begin{remark}
\label{rm:comm_avelin}
    In the proof above, we applied the Lions-Lax-Milgram theorem including the boundary term in the bilinear form, obtaining a solution with trace in $L^2(\Gamma,|v\cdot n_x|)$ directly. In \cite{Avelin2026WeakDomains} this point was left open; there, optimal trace integrability was instead recovered from Zhu’s vanishing-viscosity construction -- see \cite[Remark 3.6]{Avelin2026WeakDomains}.
\end{remark}

\begin{proof}[Alternative proof of Lemma \ref{th:existence1} for $V =\Sd$]
    For $\varepsilon \in(0,1)$, let $f_{\varepsilon}$ be the weak solution to the elliptic problem
    \begin{equation}\label{eq:elliptic}
        \left\{
        \begin{array}{lll}
        - \varepsilon \Delta_{x} f_{\varepsilon} - {\A_v} f_{\varepsilon} +v \cdot \nabla_{x} f_{\varepsilon}
        + c f_\varepsilon = S & \text {in} & \D, \\[2mm]
        \varepsilon n_{x} \cdot \nabla_{x} f_{\varepsilon}+\left(n_{x} \cdot v\right)_{-} f_{\varepsilon}=(n_x \cdot v )_-G &\text {on} & \Gamma,
        \end{array}
        \right.
    \end{equation}
    whose weak formulation for any $\varphi \in H^{1}(\D)$ is
    \begin{equation}\label{eq:elliptic_weak}
        \langle S , \varphi \rangle + \int_{\Gamma_-} |n_x \cdot v |G \Tr_x \varphi \, d\sigma
        = B[f_\varepsilon,\varphi],
    \end{equation}
    where the left-hand side is continuous in $H^1(\D)$, and
    \begin{align*}
    B[f_\varepsilon,\varphi]  
        \coloneqq& \int_{\D} \left[\varepsilon {\nabla}_xf_\varepsilon \cdot {\nabla}_x \varphi + {\nabla}_vf_\varepsilon \cdot {\nabla}_v\varphi - f_\varepsilon(v\cdot \nabla_x \varphi) + c f_\varepsilon \varphi \right] dz
        + \int_{\Gamma} (n_x \cdot v)_+ f_\varepsilon \varphi \,d\sigma\\
        =& \int_{\D} \left[\varepsilon {\nabla}_xf_\varepsilon \cdot {\nabla}_x \varphi + {\nabla}_vf_\varepsilon \cdot {\nabla}_v\varphi + c f_\varepsilon \varphi \right] dz \\
        &- \frac{1}{2} \int_\D \big[f_\varepsilon(v\cdot \nabla_x \varphi) - \varphi(v\cdot \nabla_x f_\varepsilon) \big] dz
        + \frac{1}{2}\int_{\Gamma} |n_x \cdot v| f_\varepsilon \varphi \,d\sigma.
    \end{align*}
    As $c>0$, we can apply the classical Lax-Milgram theorem to deduce that the problem \eqref{eq:elliptic} has a unique solution. We wish to find the solution to \eqref{eq:kolm_both} in the limit $\varepsilon \to 0$. We can derive a standard energy bound by choosing the solution $f_{\varepsilon}$ itself as a test function, and applying the Cauchy-Schwarz inequality:
    \begin{equation*}
        \int_{\Gamma}|n_{x} \cdot v| f_{\varepsilon}^{2} \,d\sigma+ \int_{\D} \left[f_{\varepsilon}^{2} + |{\nabla}_{v} f_{\varepsilon}|^{2}+\varepsilon|\nabla_{x} f_{\varepsilon}|^{2}\right] dz
        \lesssim_c \|S\|^2_{L^2_xH_v^{-1}(\D)} + \|G\|_{L^2(\Gamma_-, |n_x \cdot v|)}^2.
    \end{equation*}
    Hence, up to subsequence, there exist $f \in L_x^{2}H^1_v(\D)$ and $f_{\Gamma} \in L^{2}(\Gamma,|n_{x} \cdot v| )$ that satisfy 
    \begin{align*}
        f_{\varepsilon} \rightharpoonup f
        \quad &\text{in} \ L^2_xH^1_v(\D), \\
        \varepsilon \nabla_{x} f_{\varepsilon} \to 0 
        \quad &\text{in} \ L^{2}(\D), \\
        f_{\varepsilon}|_\Gamma \rightharpoonup g 
        \quad &\text{in} \ L^{2}(\Gamma, |n_{x} \cdot v|),
    \end{align*}
    as $\varepsilon \rightarrow 0$. Taking the limit in \eqref{eq:elliptic_weak}, we deduce that the weak formulation \eqref{eq:kolm_rad_weak} holds for the limiting pair $(f,f_{\Gamma})$ for $f_\Gamma$ defined as in \eqref{eq:def_f_gamma}. In particular, $f_{\Gamma}=G$ on $\Gamma_-$.
\end{proof}

\begin{remark}
    The vanishing viscosity method does not apply to the model $V=\Rg$ because, in that case, the term $\int_\D f_\varepsilon(v \cdot \nabla_x \varphi) \, dz$ would not be defined for $\varphi\in H^1(\D)$.
\end{remark}

We can now conclude the proof of Theorem \ref{th:wellpos_inflow_initial}. {To obtain energy estimates, we use the renormalisation formula: this is the only reason to assume $C^{1,\alpha_V}$ regularity for the bounded domain $\Omega$.}

\begin{proof}[Proof of Theorem \ref{th:wellpos_inflow_initial}]
    Let first $c>0$. Consider the solution $f$ with trace $f_\Gamma$ provided by Lemma \ref{th:existence1}. Its uniqueness follows by Corollary \ref{th:uniqueness}.
    Recall that, since we are also assuming that $\Omega$ is a $C^{1,\alpha_V}$ domain, in fact $f_\Gamma= \tr_xf$. Apply the renormalisation formula with $\chi(\iota)=\iota^{2}$ as renormalisation function%
    \footnote{
    More precisely, similarly to what is done with the functions $\phi_k$ defined in \eqref{eq:zz_square_cut}, take $\chi_k$ an appropriate adjustment of $\chi(\iota)=\iota^{2}$ which satisfies the assumptions of Lemma \ref{th:normalisation_kolm}, and then take the limit for $k \to \infty$. The same trick holds for similar functions later on.
    }
    and {$\phi=\phi_R$ from the proof of Proposition \ref{th:comparison}} as test function: using the Cauchy-Schwarz inequality, {in the limit $R \to \infty$} we find
    \begin{equation}\label{eq:zz_energy_w/out}
        \int_{\D} f^{2} \,dz+\int_{\Gamma}|n_{x} \cdot v| f_\Gamma^{2} \,d\sigma+\int_{\D}|{\nabla}_{v} f|^{2} \,dz\lesssim_c \|S\|^2_{L^2_xH_v^{-1}(\D)}+\int_{\Gamma_-} |n_{x} \cdot v|G^{2} \,d\sigma.
    \end{equation}
    From the weak formulation \eqref{eq:kolm_rad_weak} itself, recalling inequality \eqref{eq:explain_bounded} we recover that
    \begin{equation*}
        \|v\cdot \nabla_xf\|_{L^2_xH^{-1}_v(\D)}
        \lesssim_c \|f\|_{L^{2}(\D)} + \|{\nabla}_{v} f\|_{L^{2}(\D)} + \|S\|_{L^2_xH_v^{-1}(\D)},
    \end{equation*} 
    which concludes the proof of \eqref{eq:energy1}.
    The uniqueness follows from Corollary \ref{th:uniqueness}.

    Consider now the case $c=0$. We obtain the solution as the limit for $c\to 0$ of the sequence of solutions for $c>0$. Let $f_c\in \Hyptr(\D)$ be the unique solution to the problem for $c\in(0,1)$, and denote $g_c \coloneqq (f_c)_\Gamma$. In the derivation of the energy estimate \eqref{eq:energy1}, we can keep track of the coefficient $c$ and get
    \begin{gather}
        \label{eq:zz_with_c}
        c\|f_c\|^2_{L^{2}(\D)}+\|{\nabla}_{v} f_c\|^2_{L^{2}(\D)}+\|g_c\|^2_{L^{2}(\Gamma, |n_x\cdot v|)}
        \lesssim \frac{1}{c}\|S\|^2_{L^2_xH_v^{-1}(\D)} + \|G\|^2_{L^{2}(\Gamma_-, |n_x\cdot v|)}, \\
        \label{eq:hyp_norm}
        \|v \cdot \nabla_xf_c\|_{L^2_xH^{-1}_v(\D)}
        \lesssim c \|f_c\|_{L^{2}(\D)} + \|{\nabla}_{v} f_c\|_{L^{2}(\D)} + \|S\|_{L^2_xH_v^{-1}(\D)}. 
    \end{gather}
    Notice how \eqref{eq:zz_with_c} alone is useless in the limit $c\to0$. However, thanks to the Poincaré inequality \eqref{eq:poincare}, we can obtain an energy estimate independent of $c\in(0,1)$, which allows us to pass to the limit. 
    Indeed, \eqref{eq:zz_with_c} and \eqref{eq:hyp_norm} together give
    \begin{equation*}
        \|v \cdot \nabla_xf_c\|_{L^2_xH^{-1}_v(\D)} \lesssim \|{\nabla}_{v} f_c\|_{L^{2}(\D)} + \|S\|_{L^2_xH_v^{-1}(\D)} + \|G\|_{L^{2}(\Gamma_-, |n_x\cdot v|)},
    \end{equation*}
    and the Poincaré inequality for $f_c\in \Hyptr(\D)$ entails
    \begin{equation}
    \label{eq:zz_without_c}
        \|f_c\|_{L^{2}(\D)} + \|v \cdot \nabla_xf_c\|_{L^2_xH^{-1}_v(\D)} \lesssim \|{\nabla}_{v} f_c\|_{L^{2}(\D)} + \|S\|_{L^2_xH_v^{-1}(\D)} + \|G\|_{L^{2}(\Gamma_-, |n_x\cdot v|)},
    \end{equation}
    Moreover, for $\chi(\iota)=\iota^{2}$ and {$\phi=\phi_R$ for $R \to \infty$}, the renormalisation formula gives
    \begin{equation}
    \label{eq:zz_inter_hyp}
    \begin{aligned}
        \int_{\Gamma_-} |n_{x} \cdot v| G^2 \,d\sigma + 2 \langle S, f_c \rangle 
        &= 2\int_{\D}|{\nabla}_{v} f_c|^{2} \,dz+ 2c\int_{\D} f_c^{2} \,dz + \int_{\Gamma_+}|n_{x} \cdot v| g_c^{2} \,d\sigma\\
        &\geq \int_{\D}|{\nabla}_{v} f_c|^{2} \,dz + \int_{\Gamma_+}|n_{x} \cdot v| g_c^{2} \,d\sigma.
    \end{aligned}
    \end{equation}
    A combination of \eqref{eq:zz_without_c}, \eqref{eq:zz_inter_hyp} and the Cauchy-Schwarz inequality yields
    \begin{equation}\label{eq:hypo_estimate}
        \int_{\D} f_c^{2} \,dz + \int_{\D}|{\nabla}_{v} f_c|^{2} \,dz + \int_{\Gamma}|n_{x} \cdot v| g_c^{2} \,d\sigma
        \lesssim \|S\|^2_{L^2_xH_v^{-1}(\D)} + \int_{\Gamma_-} |n_{x} \cdot v| G^2 \,d\sigma,
    \end{equation} 
    where the implicit constant is independent of $c$.
    
    Now let $c_i\in(0,1)$ for $i\in\{1,2\}$. The difference of the respective solutions $f_{c_i}$ satisfies
    \begin{align*}
        \int_\D (c_2-c_1)f_{c_2} \varphi \,dz
        =& \int_{\D} \left[{\nabla}_v(f_{c_1}-f_{c_2}) \cdot {\nabla}_v\varphi - (f_{c_1}-f_{c_2})v\cdot \nabla_x \varphi + c_1(f_{c_1}-f_{c_2}) \varphi \right] dz \\
        &+ \int_{\Gamma} (n_x \cdot v) (g_{c_1}-g_{c_2}) \varphi \,d\sigma.
    \end{align*}
    Thus, by estimate \eqref{eq:hypo_estimate},
    \begin{multline*}
        \int_{\D} (f_{c_1}-f_{c_2})^{2} \,dz + \int_{\D}|{\nabla}_{v} (f_{c_1}-f_{c_2})|^{2} \,dz + \int_{\Gamma}|n_{x} \cdot v| (g_{c_1}-g_{c_2})^{2} \,d\sigma \\
        \lesssim |c_1-c_2| \int_{\D}f_{c_2}^{2} \,dz
        \lesssim |c_1-c_2| \left(\|S\|^2_{L^2_xH_v^{-1}(\D)} + \int_{\Gamma_-} |n_{x} \cdot v| G^2 \,d\sigma\right).
    \end{multline*} 
    This yields the existence of a limiting pair $(f,f_\Gamma)$ as $c \rightarrow 0$, which satisfies \eqref{eq:kolm_rad_weak} and $f_{\Gamma}=G$ on $\Gamma_-$, for $c=0$. In particular, it satisfies  \eqref{eq:hyp_norm} and thus, by the same steps, estimate \eqref{eq:hypo_estimate}.
\end{proof}

\subsection{A one-sided trace estimate}
\label{sec:trace_theorem}

As a consequence of Theorem \ref{th:wellpos_inflow_initial} and Remark \ref{rm:sol_boundary_probl}, we can characterise $\Hyptr(\D)$ as the set of solutions to \eqref{eq:kolm_both} up to the boundary. Indeed, for $f\in\Hyptr(\D)$, consider 
\begin{equation}\label{eq:S_for_f}
    S \coloneqq v \cdot \nabla_xf-{\A_v}f+f \in L^2_xH_v^{-1}(\D).
\end{equation}
Trivially, $f$ solves Problem \ktr{}($\Bo_{f_\Gamma}$) with $c=1$. This fact is enough to prove the one-sided trace estimate for $\Hyptr(\D)$. 

\begin{proof}[Proof of Theorem \ref{th:strong_trace_0}]
    For $f\in\Hyptr(\D)$, consider $S$ as in \eqref{eq:S_for_f}. Then, \eqref{eq:trace_partial1} follows from estimate \eqref{eq:energy1} and the bounds \begin{equation*}
        \|{\A_v}f\|_{L^2_xH_v^{-1}(\D)}\leq \|{\nabla}_vf\|_{L^2(\D)}
        \quad \text{and}
    \quad \|f\|_{L^2_xH_v^{-1}(\D)} \leq \|f\|_{L^2(\D)}.
    \end{equation*}
    For \eqref{eq:trace_partial2}, simply consider ${f}(x,v) \coloneqq f(x,-v)$, with trace ${f}_\Gamma (x,v) \coloneqq f_\Gamma(x,-v)$, and apply \eqref{eq:trace_partial1}: indeed, ${f}\in \Hyptr(\D)$, with
    \begin{equation*}
        \|{f} \|_{\Hyp(\D)} = \left\|{f} \right\|_{\Hyp(\D)}
        \quad \text{and} \quad
        \|{f}_\Gamma \|_{L^2(\Gamma_\pm,|n_x \cdot v|)} = \left\|f_\Gamma \right\|_{L^2(\Gamma_\mp,|n_x \cdot v|)}. \qedhere
    \end{equation*}
\end{proof}

\begin{remark}
Instead of \eqref{eq:kolm_both}, one could study the equation
\begin{equation}
\label{eq:kolm_rad2}
    (c - v \cdot \nabla_x) f = {\A_v}f+  S,
    \qquad 
    (x,v)\in \D,
\end{equation}
where we have changed the sign in front of the transport term. The well-posedness proof is identical, with the only difference that the roles of $\Gamma_-$ and $\Gamma_+$ are swapped. In particular, the inequality \eqref{eq:trace_partial2} is related to the well-posedness of the inflow boundary problem for \eqref{eq:kolm_rad2}, in the same way \eqref{eq:trace_partial1} was obtained from the inflow problem for \eqref{eq:kolm_both}.
\end{remark}

\subsection{The specular reflection and periodic problems}
\label{sec:refl_torus}
We finally turn to well-posedness for Problems \ref{prob:refl}{} and \ref{prob:torus}{}.

\begin{proof}[Proof of Theorem \ref{th:wellpos_refl_initial}]
    Fix $c>0$. Using the same argument as in the proof of the inflow problem, we find a solution $f$ by applying the Lions-Lax-Milgram theorem to the weak formulation \eqref{eq:kolm_rad_weak_refl} or \eqref{eq:kolm_rad_weak_torus}, with $F = L^2_{x}H^1_v(\D)$, and $H=\Crefl$ or $C^1_c(\D)$ respectively, with $L^2_{x}H^1_v(\D)$-norm. Then, taking $\chi(\iota)=\iota^{2}$ and $\phi=\phi_R$ (as in the proof of Theorem \ref{th:wellpos_inflow_initial}) in the renormalisation formula and using the Cauchy-Schwarz inequality, we obtain the energy estimate 
    \begin{equation}\label{eq:zz_with_c2}
        \|f\|_{L^{2}(\D)}+\|{\nabla}_{v} f\|_{L^{2}(\D)}      \lesssim_c \|S\|_{L^2_xH_v^{-1}(\D)}.
    \end{equation}
    Again, from the equation itself we get $v\cdot\nabla_xf \in L^2_xH^{-1}_v(\D)$ and the estimate \eqref{eq:energy6.1}.
    Consider now the case $c=0$.  If $f_c\in L^2_xH^1_v(\D)$ is the unique solution to the problem for $c >0$, the condition $\langle S \rangle_{\D}=0$ implies $\langle f_c \rangle_{\D}=0$: simply choose $\varphi=\phi_R$ (from the proof of Proposition \ref{th:comparison}) as test function, and take the limit $R \to \infty$. We consider the limit for $c\to 0$ as done in the proof of Theorem \ref{th:wellpos_inflow_initial}, and clearly the limit function $f$ also satisfies $\langle f \rangle_{\D}=0$. Notice that we apply the Poincaré inequality \eqref{eq:poincareD} for functions in $\Hyp(\D)$ with zero average.
\end{proof}

\begin{proof}[Proof of Proposition \ref{th:extend_to_boundary_BC}]
    Consider the solution to Problem \ref{prob:refl}{}. Since $f$ is a solution in the interior of $\D$ and $f\in L^\infty(\D)$, Lemma \ref{th:normalisation_kolm}\ref{item:renorm1_infty} implies that $f$ is a weak solution to \eqref{eq:kolm_both} up to the boundary for some trace $f_\Gamma\in L^\infty(\Gamma)$. Also, by uniqueness of the trace, $f_\Gamma = \tr_x f$, which satisfies $\tr_xf= \Bo_\Ro \tr_xf$ on $\Gamma_-$. Therefore $f$ solves Problem \ref{prob:trace_Ro}. The same argument applies to Problem \ref{prob:torus}{}, as it is easy to see that the trace $f_{\partial\D}\in L^\infty(\partial\D)$ satisfies $f_{\partial\D}= \Bo_\To f_{\partial\D}$ on $\Gamma_-$.
\end{proof}

As a consequence of Proposition \ref{th:extend_to_boundary_BC}, we obtain a partial well-posedness result for Problems \ref{prob:trace_Ro} and \ref{prob:trace_To} when the source is essentially bounded. 
This result was already established by Zhu \cite{Zhu2025RegularityDomains} for the time-dependent equation with operator $\Bo_\Ro$, with a direct, constructive proof: we adapt and report it in Appendix \ref{app:auxiliary} for the sake of completeness.

\begin{proposition}\label{prop:refl_torus_trace}
    Let $c>0$ and $\Omega$ a bounded $C^{1,\alpha_V}$ domain. Consider Problem \ref{prob:trace_Ro} or \ref{prob:trace_To}. Assume also $S\in L^\infty(\D)$. Then, there exists a unique solution $f$, which satisfies
    \begin{gather}
        \label{eq:energy5.1}
        \|f\|_{L^{2}(\D)} + \|{\nabla}_{v} f\|_{L^{2}(\D)}
        \lesssim_c \|S\|_{L^2_xH_v^{-1}(\D)} \\
        \label{eq:energy5.2}
        \|f_{\Gamma}\|_{L^\infty(\Gamma)} \leq \|f\|_{L^\infty(\D)}
        \lesssim_c \|S\|_{L^\infty(\D)}.
    \end{gather}
\end{proposition}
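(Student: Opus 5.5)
The plan is to obtain existence from Theorem \ref{th:wellpos_refl_initial} combined with Proposition \ref{th:extend_to_boundary_BC}, and uniqueness from Corollary \ref{th:uniqueness}. Since $S\in L^\infty(\D)$ and $\D$ has finite measure in the spherical case, or finite Gaussian measure in the Gaussian case, we have $S\in L^2(\D)\subseteq L^2_xH^{-1}_v(\D)$ with $\|S\|_{L^2_xH^{-1}_v(\D)}\lesssim \|S\|_{L^\infty(\D)}$. Theorem \ref{th:wellpos_refl_initial} with $c>0$ then gives a unique solution $f\in\Hyprefl(\D)$ to Problem \ref{prob:refl}{}, or $f\in\Hyp(\D)$ to Problem \ref{prob:torus}{}. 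It satisfies \eqref{eq:energy6.1}, which in particular yields \eqref{eq:energy5.1}.

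The next step is to show $f\in L^\infty(\D)$. For this I will invoke the weak maximum principle, Proposition \ref{th:max_princ}\ref{eq:energy_BC}, which applies to solutions of Problems \ref{prob:refl}{} and \ref{prob:torus}{} with $c>0$ and gives $\|f\|_{L^\infty(\D)}\lesssim_c\|S\|_{L^\infty(\D)}$. One point needs care: Proposition \ref{th:max_princ}\ref{eq:energy_BC} is proved by comparison with the constants $h^\pm=\pm\frac1c\|S\|_{L^\infty}$ via Proposition \ref{th:comparison}\ref{item:boundary_gives_bound_BC}. That argument uses only the renormalisation formula \eqref{eq:normalisation_kolm2}, which holds for $L^2$ solutions, so it does not presuppose boundedness of $f$. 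With $f\in L^\infty(\D)$ in hand, Proposition \ref{th:extend_to_boundary_BC} provides a trace $f_\Gamma\in L^\infty(\Gamma)$ such that $f$ solves Problem \ref{prob:trace_Ro}, or Problem \ref{prob:trace_To} on the fundamental domain. The bound $\|f_\Gamma\|_{L^\infty(\Gamma)}\le\|f\|_{L^\infty(\D)}$ comes from \eqref{eq:trace_energy0} in Lemma \ref{th:normalisation_kolm}\ref{item:renorm1_infty}, which completes \eqref{eq:energy5.2}.

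For uniqueness, suppose $f_1$ and $f_2$ both solve Problem \ref{prob:trace_Ro} (respectively \ref{prob:trace_To}) with the same $S$ and $c>0$. Corollary \ref{th:uniqueness}(ii) then gives $f_1=f_2$. Alternatively, by Remark \ref{rm:sol_boundary_probl} both are solutions of Problem \ref{prob:refl}{} (respectively \ref{prob:torus}{}), and uniqueness follows from Theorem \ref{th:wellpos_refl_initial}. Uniqueness of the trace is the standard density argument applied to \eqref{eq:kolm_rad_weak}.

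I expect the main obstacle to be the periodic case. There, Proposition \ref{th:extend_to_boundary_BC} is applied to the restriction to $(0,1)^d\times V$, and I must check that the trace produced by Lemma \ref{th:normalisation_kolm}\ref{item:renorm1_infty} on the cube (a Lipschitz domain) satisfies $f_{\partial\D}=\Bo_\To f_{\partial\D}$. My first attempt will use translation invariance on $\torus$: shift the fundamental domain slightly, so that the faces become interior hypersurfaces where one-sided traces from either side must agree by the interior weak formulation. The remaining steps are direct citations.
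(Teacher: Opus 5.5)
Your proposal is correct and is the paper's own proof: Theorem \ref{th:wellpos_refl_initial} plus the $L^\infty$ bound of Proposition \ref{th:max_princ}\ref{eq:energy_BC}, fed into Proposition \ref{th:extend_to_boundary_BC}, gives existence; Corollary \ref{th:uniqueness} gives uniqueness; \eqref{eq:energy5.1} comes from the renormalisation energy estimate and the trace bound from \eqref{eq:trace_energy0}. For the periodic check no shifting of the fundamental domain is needed: test the cube formulation with restrictions of functions in $C^1_c(\torus\times V)$ and compare with \eqref{eq:kolm_rad_weak_torus}, which leaves $\int_{\partial\D}(n_x\cdot v)f_{\partial\D}\varphi\,d\sigma=0$ for all periodic $\varphi$ and forces $f_{\partial\D}=\Bo_\To f_{\partial\D}$, exactly as in the derivation of \eqref{eq:hyprefl2}.
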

\begin{proof}
        Existence is given by Proposition \ref{th:extend_to_boundary_BC} and Proposition \ref{th:max_princ}\ref{eq:energy_BC}, and uniqueness by Corollary \ref{th:uniqueness}. Estimate \eqref{eq:energy5.1} is proved by a now standard application of the renormalisation formula, whereas \eqref{eq:energy5.2} was already proved in Proposition \ref{th:max_princ}\ref{eq:energy_ART}. 
\end{proof}

\section{The bounded-velocity model}
\label{sec:bounded}

We now consider equation \eqref{eq:kolm_both} when the velocity variable is confined to a bounded domain $\V\subseteq\R^d$.
The corresponding Dirichlet problem was studied by Avelin and Hou \cite{Avelin2026WeakDomains}. We retain their formulation and complement their well-posedness result with energy estimates, which we obtain thanks to the renormalisation formula under the $C^{1,1}$ regularity assumption on $\Omega$.

We keep the same notation for $\D=\Omega\times\V$ and $\Gamma=\partial \Omega \times \V$, and analogously for $\Gamma_\pm$ and $\Gamma_0$. This time, the velocity boundary $\Omega \times \partial\V$ is also involved. Denote by $\Tr_v$ the standard $v$-trace operator for $L^2_xH^1_v(\D)$, and define
\begin{equation}
    \HOv(\D) \coloneqq \{\varphi\in H^1(\D) \mid
    \Tr_v \varphi=0 \ \text{on } \Omega\times\partial\V\}.
\end{equation}
The space $\Hyptr(\D)$ is defined as in Definition \ref{def:hypA}: however, in this setting,
 \begin{equation}
    L^2_xH^1_{0,v}(\D)
    \coloneqq
    L^2(\Omega;H^1_0(\V)),
    \qquad
    L^2_xH^{-1}_v(\D) \coloneqq \big(L^2_xH^1_{0,v}(\D)\big)' ,
\end{equation}
and \eqref{eq:transA} is tested only for $\varphi \in \HOv(\D)$.

\begin{definition}[Bounded-velocity Dirichlet problem] \label{def:Dirichlet_bounded}
    Let $\Omega, \V$ be bounded Lipschitz domains, $S\in L_x^2H_v^{-1}(\D)$ and $c \in \R$. Consider an inflow boundary datum $G_1\in L^2\bigl(\Gamma_-,|n_x \cdot v|\bigr)$ and a lifted velocity boundary datum $G_2\in H^1(\D)$.
    A {weak solution to the Dirichlet problem}
    \begin{equation}\label{eq:dirichlet}
        \begin{cases}
            cf + v \cdot \nabla_x f = \A_vf+  S
            &\text{in} \ \D
            \\
            f = G_1
            &\text{on} \ \Gamma_-
            \\
            f = \Tr_v G_2
            &\text{on} \ \Omega \times \partial\V
        \end{cases}
    \end{equation}
    is a function $f\in L^2_xH^1_v(\D)$ for which there exists a (unique) $f_{\Gamma} \in L^2(\Gamma, |n_x \cdot v|)$, called \emph{trace} of $f$, such that 
    $$ \Tr_vf= \Tr_vG_2 \ \text{on} \ \Omega \times \partial\V,
    \qquad \text{and} \qquad
    f_\Gamma=G_1 \ \text{on} \ \Gamma_-,$$
    and for every $\varphi\in \HOv({\D})$ the pair $(f,f_\Gamma)$ satisfies
    \begin{equation}\label{eq:kolm_bounded_weak}
        \langle S , \varphi \rangle
        = \int_{\D} \left[{\nabla}_vf \cdot {\nabla}_v\varphi - f(v\cdot \nabla_x \varphi) + c f \varphi\right] dz
        + \int_{\Gamma} (n_x \cdot v) f_{\Gamma} \varphi \ d\sigma.
    \end{equation}
\end{definition}

Notice that, for $\varphi \in \HOv(\D)$, the Poincar\'e inequality on $\V$ gives
\begin{equation}
    \|\varphi\|_{L^2(\D)}
    \lesssim
    \|\nabla_v\varphi\|_{L^2(\D)},
    \label{eq:v-poincare}
\end{equation}
providing coercivity already for $c=0$. This makes the bounded-velocity case simpler than the velocity models considered above.

\begin{theorem}[Well-posedness for the bounded-velocity Dirichlet problem]
\label{thm:bounded-velocity-wellposedness}
Let $\Omega$ be a bounded $C^{1,1}$ domain, and let $\V\subseteq\R^d$ be a bounded Lipschitz domain.
Let $c\geq0$, $S\in L^2_xH^{-1}_v(\D)$, $G_1\in L^2\bigl(\Gamma_-,|n_x \cdot v|\bigr)$, and $G_2\in H^1(\D)$. Then there exists a unique solution $f$ to the Dirichlet problem \eqref{eq:dirichlet}. It satisfies $f\in\Hyptr(\D)$ and 
\begin{equation}\label{eq:energy_bounded_vel}
    \|f\|_{\Hyptr(\D)} 
    \lesssim_c
    \|S\|_{L^2_xH^{-1}_v(\D)}
    +
    \|G_1\|_{L^2(\Gamma_-,|n_x \cdot v|)}
    +
    \|G_2\|_{H^1(\D)}
\end{equation}
\end{theorem}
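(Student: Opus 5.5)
Our plan is to reduce to homogeneous velocity boundary data, solve by Lions--Lax--Milgram, and obtain the energy estimate from the renormalisation formula. The Poincaré inequality \eqref{eq:v-poincare} gives coercivity uniformly in $c\geq0$, so no limit $c\to0$ is needed.

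\emph{Lifting.} Set $\tilde f\coloneqq f-G_2$. Then $f$ solves \eqref{eq:dirichlet} if and only if $\tilde f$ solves the same problem with $\Tr_v\tilde f=0$ on $\Omega\times\partial\V$ and inflow datum $\tilde G_1\coloneqq G_1-\tr_xG_2$ on $\Gamma_-$. The new source is $\tilde S\coloneqq S+\A_vG_2-cG_2-v\cdot\nabla_xG_2$, understood in the weak sense with the boundary term of $G_2$ moved to the right-hand side. Since $\V$ is bounded, $|v|$ is bounded, so $\|\tilde S\|_{L^2_xH^{-1}_v}\lesssim_c\|S\|_{L^2_xH^{-1}_v}+\|G_2\|_{H^1(\D)}$. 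Moreover the standard $H^1(\D)$ trace gives $\|\tr_xG_2\|_{L^2(\Gamma,|n_x\cdot v|)}\lesssim\|G_2\|_{H^1(\D)}$, because $|n_x\cdot v|$ is bounded. Hence it suffices to treat $G_2=0$.

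\emph{Existence.} We repeat the proof of Lemma~\ref{th:existence1} with $F\coloneqq L^2_xH^1_{0,v}(\D)\times L^2(\Gamma_+,|n_x\cdot v|)$ and $H\coloneqq\{h\in C^1(\overline\D)\cap\HOv(\D)\}$, using the same bilinear form $E$. Condition \ref{item:H1} holds exactly as before, now without any cutoff in $v$. For \ref{item:H2}, the identity \eqref{eq:coercivity_E} together with \eqref{eq:v-poincare} gives
\[
E[h,h]\gtrsim\|h\|_H^2 \quad\text{for all } c\geq0.
\]
This produces a pair $(f,g)$, and we define $f_\Gamma$ as in \eqref{eq:def_f_gamma}, which yields a weak solution. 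Bounding $v\cdot\nabla_xf$ in $L^2_xH^{-1}_v$ directly from the equation, as in \eqref{eq:explain_bounded}, shows $f\in\Hyptr(\D)$.

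\emph{Energy estimate and uniqueness.} We need the renormalisation formula of Lemma~\ref{th:normalisation_kolm}\ref{item:renorm1} in this setting. The mollification is only in $x$, so $\chi'(f_\varepsilon)\phi$ with $\chi(0)=0$ stays in $L^2_xH^1_{0,v}$ and is an admissible test function. The boundary limit is justified by Proposition~\ref{th:trace_p} for $V=\V$, which is where $C^{1,1}\subseteq C^{1,1/3}$ regularity enters; this also identifies $f_\Gamma=\tr_xf$. We take $\chi(\iota)=\iota^2$, suitably truncated as in the footnote there, and $\phi\equiv1$, which is allowed since $\V$ is bounded. This gives
\[
2\int_\D|\nabla_vf|^2+2c\int_\D f^2+\int_{\Gamma_+}|n_x\cdot v|f_\Gamma^2
=2\langle S,f\rangle+\int_{\Gamma_-}|n_x\cdot v|G_1^2 .
\]
Using \eqref{eq:v-poincare} and Young's inequality, we control $\|f\|_{L^2_xH^1_v}$ and the full trace norm by the data, uniformly for $c\geq0$. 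Feeding this into the equation bounds $\|v\cdot\nabla_xf\|_{L^2_xH^{-1}_v}$, which gives \eqref{eq:energy_bounded_vel}. Uniqueness follows by applying the same identity to the difference of two solutions, which has zero data.

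\emph{Main obstacle.} The delicate step is the renormalisation with vanishing Dirichlet data on $\Omega\times\partial\V$. One must check two things: that the commutator error from the translated $x$-mollification still tends to $0$ in $L^2$ when $v$ ranges over a bounded domain with a boundary, and that the choice $\chi(0)=0$ keeps the velocity trace zero. Both should follow because mollifying only in $x$ commutes with $\Tr_v$.
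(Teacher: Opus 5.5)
Your proposal is correct and follows essentially the paper's route. Both proofs lift by $G_2$ and apply Lions--Lax--Milgram with the same $F$, bilinear form and functional, coercive for every $c\geq0$ thanks to \eqref{eq:v-poincare}. Both then use the renormalisation identity with $\chi(\iota)=\iota^2$ and $\phi\equiv1$ for the energy bound and for uniqueness; the only difference is that the paper cites Avelin--Hou's version of Lemma~\ref{th:normalisation_kolm} where you adapt it yourself.

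Two minor points. What keeps $\chi'(f_\varepsilon)\phi$ in $L^2_xH^1_{0,v}(\D)$ is $\chi'(0)=0$, not $\chi(0)=0$. Also, taking $H=\HOv(\D)$ directly, as the paper does, spares you a density argument: with your $H$ you would need $C^1(\overline{\D})\cap\HOv(\D)$ to be dense in $\HOv(\D)$ to reach all test functions of Definition~\ref{def:Dirichlet_bounded}.
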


\begin{proof}
Suppose first $G_2=0$.
We apply the Lions-Lax-Milgram theorem as in the proof of Lemma \ref{th:existence1}. Consider
\[
    F \coloneqq 
    L^2_xH^1_{0,v}(\D)
    \times
    L^2\bigl(\Gamma_+,(n_x \cdot v) \bigr).
\]
with norm \eqref{eq:norm_F}, and $H\coloneqq \HOv(\D)$ with norm \eqref{eq:norm_H}. Proceed as in the proof of Lemma \ref{th:existence1}, considering the same bilinear form $E[(f,g),h]$ and the same functional $L(h)$. Notice that, this time, thanks to the $v$-Poincaré inequality \eqref{eq:v-poincare}, the coercivity estimate \eqref{eq:coercivity_E} for $E$ also holds for $c=0$. The Lions-Lax-Milgram theorem therefore yields a pair $(f,g)\in L^2_xH^1_{0,v}(\D) \times L^2(\Gamma_+,|n_x \cdot v|)$ such that $E[(f,g),h]=L(h)$ for every $\varphi\in\HOv(\D)$.
Defining
\[
    f_\Gamma
    \coloneqq 
    \begin{cases}
        g
        &\text{on }\Gamma_+,\\
        G_1
        &\text{on }\Gamma_-,
    \end{cases}
\]
we get that $f$ solves the Dirichlet problem \eqref{eq:kolm_bounded} with inflow datum $G_1$ and zero velocity-boundary condition.

From the equation itself, we get
\begin{align}
    \|v\cdot\nabla_xf\|_{L^2_xH^{-1}_v(\D)}
    &\lesssim
    \|S\|_{L^2_xH^{-1}_v(\D)}
    +
    \|\nabla_vf\|_{L^2(\D)}
    +
    c\|f\|_{L^2(\D)}.
    \label{eq:transport-estimate}
\end{align}
The renormalisation formula \cite[Lemma 5.3]{Avelin2026WeakDomains} -- which is the equivalent of Lemma \ref{th:normalisation_kolm} for the bounded-velocity model -- with renormalising function $\chi(t)=t^2$ and test function $\phi=1$ gives
\begin{equation*}
    \int_{\D} |\nabla_vf|^2\, dz + c\int_{\D} f^2\, dz +
    \frac12 \int_{\Gamma_+} (n_x \cdot v)|f_\Gamma|^2 \,d\sigma
    =
    \langle S,f\rangle_
        {L^2_xH^{-1}_v,L^2_xH^1_{v}}
    +
    \frac12
    \int_{\Gamma_-}
        |n_x \cdot v|| G_1|^2
        \,d\sigma.
\end{equation*}
There is no contribution from $\Omega\times\partial\V$ since $f\in L^2_xH^1_{0,v}(\D)$.
By \eqref{eq:v-poincare} and Young's inequality, we get
\begin{align}
    \|\nabla_vf\|_{L^2}^2
    +
    c\|f\|_{L^2}^2
    +
    \|f_\Gamma\|_{L^2(\Gamma_+,|n_x \cdot v|)}^2
    \lesssim
    \| S\|_{L^2_xH^{-1}_v}^2
        +
        \| G_1\|_
            {L^2(\Gamma_-,|n_x \cdot v|)}^2.
    \label{eq:h-coercive-energy}
\end{align}
Using Poincar\'e inequality \eqref{eq:v-poincare} once more, and combining with \eqref{eq:transport-estimate}, we get 
\begin{align}
    \|f\|_{\Hyptr(\D)} 
    \lesssim_c
    \| S\|_{L^2_xH^{-1}_v}
    +
    \| G_1\|_{L^2(\Gamma_-,|n_x \cdot v|)}.
    \label{eq:h-energy}
\end{align}
This concludes the case $G_2=0$.

For arbitrary $G_2 \in H^1(\D)$, define
\begin{align*}
    \tilde S
    &\coloneqq 
    S+\Delta_vG_2-v\cdot\nabla_xG_2-cG_2 \in
    L^2_xH^{-1}_v(\D), \\
    \tilde G_1
    &\coloneqq 
    G_1-\Tr_xG_2 \in L^2(\Gamma_-, |n_x \cdot v|).
\end{align*}
Here $\Delta_vG_2$ is understood distributionally. Let $\tilde f$ be the solution of \eqref{eq:kolm_bounded} with source $\tilde S$ and inflow datum $\tilde G_1$. Finally, $f\coloneqq \tilde f+G_2$ solves \eqref{eq:kolm_bounded} with trace $f_\Gamma \coloneqq \tilde f_\Gamma+\Tr_xG_2$. Moreover, $f=\Tr_vG_2$ on $\Omega\times\partial\V$, and $f_\Gamma=G_1$ on $\Gamma_-$, and estimate \eqref{eq:energy_bounded_vel} follows from \eqref{eq:h-energy}. Uniqueness is a consequence of the energy estimate, but it can also be proved independently, similarly to Proposition \ref{th:comparison} and Corollary \ref{th:uniqueness}: see \cite[Theorem 3]{Avelin2026WeakDomains}.
\end{proof}

\appendix

\section{The auxiliary problems}
\label{app:auxiliary}
Let $\Omega$ be a bounded Lipschitz domain. To prove Proposition \ref{prop:refl_torus_trace}, we adapt the techniques used in \cite{Zhu2025RegularityDomains} for the time-dependent equation with specular reflection. The (unique) solutions to Problem \ref{prob:trace_Ro} and \ref{prob:trace_To} can be found as the limit of an appropriate sequence of solutions to the inflow boundary value problem. However, in contrast with the inflow problem, here the integral boundary term
$$\int_{\Gamma} (n_x \cdot v)f_{\Gamma}^2 \,d\sigma$$
vanishes if $f_{\Gamma}$ satisfies either the specular reflection condition $f_{\Gamma}=\Bo_\Ro f_{\Gamma}$ or the periodic condition $f_{\Gamma}= \Bo_\To f_{\Gamma}$ on $\Gamma_-$. Indeed, for  $\Bo \in \{\Bo_\Ro, \Bo_\To\}$
\begin{equation}\label{eq:zz_boundary_no}
    \int_{\Gamma} (n_{x} \cdot v)_{-} f_{\Gamma}^{2} \,d\sigma
    = \int_{\Gamma} (n_{x} \cdot v)_{-}(\Bo f_{\Gamma})^{2} \,d\sigma
    = \int_{\Gamma} (n_{x} \cdot v)_{+} f_{\Gamma}^{2} \,d\sigma,
\end{equation}
and the information on the $L^2$-norm of the trace is lost in the limit process. Therefore, we use the auxiliary boundary operators $a\Bo_\Ro$ and $a\Bo_\To$ for $a\in(0,1)$, for which the boundary flux does not vanish, combined with a limit argument holding for $S\in L^\infty(\D)$. 

\begin{lemma}\label{th:lemma_a}
    Let $c>0$ and $a \in[0,1)$, and consider Problem \ktr{}($a\Bo_\Ro$) or \ktr{}($a\Bo_\To$). Then, there exists a unique solution $f$, which satisfies
    \begin{equation}\label{eq:energy3}
        \|f\|_{L^{2}(\D)} + \|{\nabla}_{v} f\|_{L^{2}(\D)} + (1-a)\|f_{\Gamma}\|_{L^{2}(\Gamma, |n_x \cdot v|)} 
        \lesssim_c \|S\|_{L^2_xH_v^{-1}(\D)}.
    \end{equation}
    If also $S\in L^\infty(\D)$, then
    \begin{equation}\label{eq:energy4}
        \|f_{\Gamma}\|_{L^\infty(\Gamma)} \leq \|f\|_{L^\infty(\D)}
        \lesssim_c \|S\|_{L^\infty(\D)}.
    \end{equation}
\end{lemma}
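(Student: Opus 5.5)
The plan is to obtain the solution as a fixed point of the inflow solution map from Theorem \ref{th:wellpos_inflow_initial}, or more precisely Lemma \ref{th:existence1} together with the energy estimate \eqref{eq:energy1} for $c>0$. For $a=0$ the statement is exactly Problem \ktr($\Bo_0$), i.e. the absorbing problem, so assume $a\in(0,1)$. Define $T\colon L^2(\Gamma_+,|n_x\cdot v|)\to L^2(\Gamma_+,|n_x\cdot v|)$ as follows. Given $g$ on $\Gamma_+$, let $G\coloneqq a\Bo g$ on $\Gamma_-$; here $\Bo\in\{\Bo_\Ro,\Bo_\To\}$ is a measure-preserving bijection $\Gamma_+\to\Gamma_-$ that preserves the weight $|n_x\cdot v|$, so $\|G\|_{L^2(\Gamma_-,|n_x\cdot v|)}=a\|g\|_{L^2(\Gamma_+,|n_x\cdot v|)}$. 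Solve Problem \ktr($\Bo_G$) to get $f$, and set $Tg\coloneqq f_\Gamma|_{\Gamma_+}$. A fixed point gives $f_\Gamma=a\Bo f_\Gamma$ on $\Gamma_-$, which is a solution.

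The key step is the contraction property, and it comes from the renormalised energy identity \eqref{eq:zz_inter_hyp} applied to differences. For two data $g_1,g_2$, the difference $w=f_1-f_2$ solves the inflow problem with $S=0$ and inflow datum $a\Bo(g_1-g_2)$. The renormalisation formula with $\chi(t)=t^2$, $\phi=\phi_R$, $R\to\infty$ gives
\[
2\int_\D|\nabla_v w|^2+2c\int_\D w^2+\int_{\Gamma_+}|n_x\cdot v|\,w_\Gamma^2=a^2\int_{\Gamma_+}|n_x\cdot v|(g_1-g_2)^2 .
\]
Hence $\|Tg_1-Tg_2\|\le a\|g_1-g_2\|$, and Banach's fixed point theorem yields existence. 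Uniqueness follows from Corollary \ref{th:uniqueness}, or directly from the same identity.

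For \eqref{eq:energy3}, apply the identity to the fixed point itself with source $S$:
\[
2\|\nabla_vf\|^2+2c\|f\|^2+\|f_\Gamma\|^2_{\Gamma_+}=a^2\|f_\Gamma\|^2_{\Gamma_+}+2\langle S,f\rangle .
\]
Since $1-a^2\ge1-a$, Young's inequality gives
\[
\|f\|^2+\|\nabla_v f\|^2+(1-a)\|f_\Gamma\|^2_{\Gamma_+}\lesssim_c\|S\|^2 .
\]
The $\Gamma_-$ part is $a$ times the $\Gamma_+$ part, so it is controlled as well. Taking square roots yields \eqref{eq:energy3}, with $(1-a)^{1/2}\ge 1-a$.

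For \eqref{eq:energy4}, use the maximum principle, Proposition \ref{th:max_princ}\ref{eq:energy_A}: $\|f\|_\infty\le C_c\|S\|_\infty+\|f_\Gamma\|_{L^\infty(\Gamma_-)}$. This is circular with a factor $a$, so instead I would compare directly via Proposition \ref{th:comparison}\ref{item:boundary_gives_bound_AG}. Put $M\coloneqq\|S\|_\infty/c$ and $h=f-M$. Then $\Lo_c h\le0$ and $h_\Gamma=a\Bo f_\Gamma-M$ on $\Gamma_-$.

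\emph{Main obstacle:} we do not know a priori that $f_\Gamma\le M$ on $\Gamma_+$, nor even that $f\in L^\infty$. I would handle this by running the iteration $g_{n+1}=Tg_n$ from $g_0=0$, which converges in $L^2$ to the fixed point. By induction, using Proposition \ref{th:max_princ}\ref{eq:zz_second_max}-type comparison with the constant $M$, if $|g_n|\le M$ then the inflow datum satisfies $|a\Bo g_n|\le aM\le M$. Hence $|f_{n+1}|\le M$ in $\D$, and by \eqref{eq:trace_energy0} also $|g_{n+1}|\le M$. Bounds pass to the $L^2$ limit, giving $\|f\|_\infty\le\|S\|_\infty/c$, and the trace bound follows from \eqref{eq:trace_energy0}.
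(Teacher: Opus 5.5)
Your proposal is correct. For existence, uniqueness and \eqref{eq:energy3} it is the paper's argument in different clothing. The paper runs the same Picard iteration $g_n=a\Bo g_{n-1}$ from $g_0=0$, up to an index shift, and proves geometric convergence of $f_n-f_{n-1}$ from the renormalised energy identity with factor $a^2$; that is your contraction estimate. The only difference there is that you apply the identity directly to the fixed point, whereas the paper sums \eqref{eq:zz_partial2} along the iterates.

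The genuine difference is in \eqref{eq:energy4}. Write $M=\|S\|_{L^\infty(\D)}$. The paper proves it as an a priori estimate on the solution itself. It renormalises $\bar f=f-\tfrac{M}{c}$ with $\chi(\iota)=\iota_+^2$, and uses the pointwise inequality $(at-\tfrac{M}{c})_+\le a(t-\tfrac{M}{c})_+$ to show that $\int_\Gamma (n_x\cdot v)(\bar f_\Gamma)_+^2\,d\sigma\ge 0$ under $f_\Gamma=a\Bo f_\Gamma$. You instead propagate $|g_n|\le M/c$ along the iteration. At each inflow step you use Proposition \ref{th:comparison}\ref{item:boundary_gives_bound_AG}, since the inflow datum is bounded by $aM/c\le M/c$. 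You then use \eqref{eq:trace_energy0} to pass the interior bound to the outgoing trace, and let $n\to\infty$.

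Your route needs no new renormalisation computation and uses the comparison principle as a black box. However, it only bounds the solution produced by the iteration, so it relies on uniqueness to cover every solution, which you do have. The paper's route applies to any solution of Problem \ktr{}($a\Bo$), however it was constructed. Both give the $a$-independent constant $1/c$ needed later in Proposition \ref{prop:refl_torus_trace}. You were also right that applying Proposition \ref{th:max_princ}\ref{eq:energy_A} naively would only give a constant of order $1/(c(1-a))$.
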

\begin{proof}
    Let $\Bo$ be either $\Bo_\Ro$ or $\Bo_\To$. Through an iterative application of Theorem \ref{th:wellpos_inflow_initial}, for every $n\in\N$ consider the weak solution $f_n$ with trace $g_n \coloneqq (f_n)_\Gamma$ satisfying the inflow condition
    \begin{equation*}
    \begin{cases}
        g_{n}=a \Bo g_{n-1} & \text{for} \ n \geq 1 \\
        g_0=0
    \end{cases}
        \qquad \text{on} \ \Gamma_-.
    \end{equation*}
    As shown in \eqref{eq:zz_boundary_no},
    \begin{equation}\label{eq:zz_boundary}
        \int_{\Gamma} (n_{x} \cdot v)_{-} g_{n}^{2} \,d\sigma
        = \int_{\Gamma} (n_{x} \cdot v)_{-}(a \Bo g_{n-1})^{2} \,d\sigma
        = a^{2} \int_{\Gamma} (n_{x} \cdot v)_{+} g_{n-1}^{2} \,d\sigma.
    \end{equation}
    Taking $\chi(\iota)=\iota^{2}$ and $\phi=\phi_R$ (from the proof of Proposition \ref{th:comparison}) in the renormalisation formula, using the Cauchy-Schwarz inequality, in the limit $R\to\infty$ we obtain 
    \begin{gather}
        \int_{\D} f_{n}^{2} \,dz + \int_{\D}|{\nabla}_{v} f_{n}|^{2} \,dz
        \lesssim_c \|S\|^2_{L^2_xH_v^{-1}(\D)} + \int_{\Gamma}(n_{x} \cdot v)_{-} g_{n}^{2} \,d\sigma
        \label{eq:zz_partial1}\\
        \int_{\Gamma}(n_{x} \cdot v)_{+} g_{n}^{2} \,d\sigma
        \leq \|S\|^2_{L^2_xH_v^{-1}(\D)} + \int_{\Gamma}(n_{x} \cdot v)_{-} g_{n}^{2} \,d\sigma
        \label{eq:zz_partial2}
    \end{gather}
    Applying \eqref{eq:zz_boundary} and \eqref{eq:zz_partial2} iteratively leads to
    \begin{equation}\label{eq:zz_boundaryPos}
        \int_{\Gamma}(n_{x} \cdot v)_{+} g_{n}^{2} \,d\sigma
        \leq \sum_{i=0}^{n} a^{2 i} \|S\|^2_{L^2_xH_v^{-1}(\D)}
        \leq \frac{1}{1-a^{2}} \|S\|^2_{L^2_xH_v^{-1}(\D)}.
    \end{equation}
    By linearity, the function $f_{n}-f_{n-1}$ solves an inflow problem with $S=0$ and trace $g_{n}-g_{n-1}$ satisfying
    \begin{equation*}
        g_{n}-g_{n-1}=a \Bo(g_{n-1}-g_{n-2}) \quad \text{on} \ \Gamma_-.
    \end{equation*}
    Similarly, we get
    \begin{multline*}
        \int_{\D}(f_{n}-f_{n-1})^{2} \,dz+ \int_{\D}|{\nabla}_{v}(f_{n}-f_{n-1})|^{2} \,dz+ \int_{\Gamma}|n_{x} \cdot v|(g_{n}-g_{n-1})^{2} \,d\sigma \\
        \lesssim_c \int_{\Gamma}(n_{x} \cdot v)_{-}(g_{n}-g_{n-1})^{2} \,d\sigma 
        \leq a^{2 (n-1)} \int_{\Gamma}(n_{x} \cdot v)_{-}(g_{1}-g_{0})^{2} \,d\sigma.
    \end{multline*}
    Sending $n \to \infty$, we derive a pair of limiting functions $(f,f_{\Gamma})$ that solve \eqref{eq:kolm_rad_weak} with $f_{\Gamma}=a \Bo f_{\Gamma}$ on $\Gamma_-$, and that satisfy \eqref{eq:zz_partial1}. By specular reflection and \eqref{eq:zz_boundaryPos},
    \begin{equation*}
        \int_{\Gamma}|n_{x} \cdot v| f_{\Gamma}^{2} \,d\sigma
        = (1+a^2)\int_{\Gamma}(n_{x} \cdot v)_{+} f_{\Gamma}^{2} \,d\sigma
        \leq \frac{1+a^2}{1-a^{2}} \|S\|^2_{L^2_xH_v^{-1}(\D)}.
    \end{equation*}
    Combined with \eqref{eq:zz_partial1}, this proves \eqref{eq:energy3}.
    Uniqueness follows from the energy estimate.

    To show \eqref{eq:energy4}, denote $M \coloneqq \|S\|_{L^\infty(\D)}$ and consider the equation solved by
    $$\bar{f} \coloneqq f-\tfrac{1}{c}M,$$
    whose weak formulation is
    \begin{equation*}
        \int_{\D} (S -M) \varphi \,dz
        = \int_{\D} \left[{\nabla}_v\bar{f} \cdot {\nabla}_v\varphi - \bar{f}(v\cdot \nabla_x \varphi) + c \bar{f} \varphi \right] dz
        + \int_{\Gamma} (n_x \cdot v) \bar{f_{\Gamma}} \varphi \,d\sigma.
    \end{equation*}
    Applying the renormalisation formula with $\chi(\iota)=\iota_{+}^{2}$ and $\phi=\phi_R$ (from the proof of Proposition \ref{th:comparison}), in the limit $R\to\infty$ we get 
    \begin{equation*}
        0 \geq \int_{\D} 2 (S-M) \bar{f}_+ \,dz
         = \int_{\D} \left[ \chi''(\bar{f}) |{\nabla}_v\bar{f}|^2 + 2c\bar{f}\bar{f}_+ \right] dz + \int_{\Gamma} (n_x \cdot v) (\bar{f_{\Gamma}})^2_+ \,d\sigma
         \geq \int_{\D} 2c\bar{f}_+^2 \,dz,
    \end{equation*}
    where we used that
    \begin{align*}
        \int_{\Gamma}(n_{x} \cdot v)(f_{\Gamma}-\tfrac{1}{c}M)_{+}^{2} \,d\sigma
        &= \int_{\Gamma_+}(n_{x} \cdot v)\left[(f_{\Gamma}-\tfrac{1}{c}M)_{+}^{2}-(a f_{\Gamma}-\tfrac{1}{c}M)_{+}^{2} \right] \,d\sigma\\
        &\geq \int_{\Gamma_+}(n_{x} \cdot v)\left[(f_{\Gamma}-\tfrac{1}{c}M)_{+}^{2}-a^{2}(f_{\Gamma}-\tfrac{1}{c}M)_{+}^{2} \right] \,d\sigma\geq 0.
    \end{align*}
    In particular, it follows that $(f-\tfrac{1}{c}M)_+=\bar{f}_+=0$ in $\D$.
    Similarly, starting from the equation solved by $-f-\tfrac{1}{c}M$, one can show that $(-f-\tfrac{1}{c}M)_+=0$ in $\D$. Together with \eqref{eq:trace_energy0}, this yields \eqref{eq:energy4}.
\end{proof}

As anticipated, the above argument lacks information on the $L^2(\Gamma,|n_x \cdot v|)$ norm of the trace as $a\to 1$. Nevertheless, we can still find a candidate solution $f$ from \eqref{eq:energy3} in the limit $a\to 1$, whereas the trace $f_{\Gamma}$ is recovered using the extra assumption $S\in L^\infty(\D)$.

\begin{proof}[Proof of Proposition \ref{prop:refl_torus_trace}]
     Let $\Bo$ be either $\Bo_\Ro$ or $\Bo_\To$. For $i=1,2$, take $a_{i} \in[0,1)$, and let $f_{a_{i}}$ be the solution to Problem \ktr{}($a_i\Bo$) provided by Lemma \ref{th:lemma_a} -- more explicitly, the trace $g_{a_i}\coloneqq (f_{a_i})_\Gamma$ satisfies $g_{a_{i}}=a_{i} \Bo g_{a_{i}}$. In particular, $f_{a_{i}}$ satisfies \eqref{eq:energy5.1} and \eqref{eq:energy5.2}. Consider the equation solved by $f_{a_1}-f_{a_2}$: applying the renormalisation formula with $\chi(\iota)=\iota^{2}$ and $\varphi=1$, we find
     \begin{equation}\label{eq:zz_limit_a}
         2c \int_{\D}(f_{a_{1}}-f_{a_{2}})^{2} \,dz + \int_{\Gamma}(n_{x} \cdot v)(g_{a_{1}}-g_{a_{2}})^{2} \,d\sigma + 2 \int_{\D}|{\nabla}_{v}(f_{a_{1}}-f_{a_{2}})|^{2} \,dz = 0.
     \end{equation}
    The boundary term can be written as
    \begin{align*}
        \int_{\Gamma}&(n_{x} \cdot v)(g_{a_{1}}-g_{a_{2}})^{2} \,d\sigma\\
        &= \int_{\Gamma_+}(n_{x} \cdot v)\left[(g_{a_{1}}-g_{a_{2}})^{2}-(a_{1} g_{a_{1}}-a_{2} g_{a_{2}})^{2}\right] d\sigma \\
        &= \int_{\Gamma_+}(n_{x} \cdot v)\left[(1-a_{1}^{2})(g_{a_{1}}-g_{a_{2}})^{2} - 2 a_{1}(a_{1}-a_{2}) g_{a_{1}}(g_{a_{1}}-g_{a_{2}})-(a_{1}-a_{2})^{2} g_{a_{2}}^{2}\right] d\sigma.
    \end{align*}
    Owing to the estimate \eqref{eq:energy5.2} for $g_{a_{1}}, g_{a_{2}}$, the above boundary term tends to zero as $a_{1}, a_{2} \rightarrow 1$. Together with \eqref{eq:zz_limit_a}, this gives the existence of a limit function $f$ of $f_{a}$ as $a \rightarrow 1$ which satisfies \eqref{eq:energy5.1} and the second inequality in \eqref{eq:energy5.2}. At the same time, the estimate \eqref{eq:energy5.2} for $g_a$ yields the existence of a function $f_{\Gamma} \in L^{\infty}(\Gamma)$ such that $g_a \xrightharpoonup{*} f_\Gamma$ in $L^{\infty}(\Gamma)$, up to subsequence in $a$.
    Combining the two limits, we get that $(f,f_\Gamma)$ satisfies \eqref{eq:kolm_rad_weak} with $f_\Gamma=\Bo f_\Gamma$ on $\Gamma_-$, and \eqref{eq:energy5.2} follows from \eqref{eq:trace_energy0}.
\end{proof}

Note that, in particular, we have obtained Proposition \ref{th:max_princ}\ref{eq:energy_ART} constructively.

\bibliographystyle{abbrv}
\bibliography{references_kolm}

\end{document}